\newcounter{are-there-sections}
\def\me{S\'ANDOR J KOV\'ACS\xspace}
\def\mythanksjoint{S\'andor Kov\'acs was supported in part by NSF Grants DMS-0554697
  and DMS-0856185, and the Craig McKibben and Sarah Merner Endowed Professorship in
  Mathematics.} 
\def\myaddress{University of Washington, Department of Mathematics, Box 354350,
Seattle, WA 98195-4350, USA}
\def\myemail{skovacs@uw.edu\xspace}
\def\myurladdr{http://www.math.washington.edu/$\sim$kovacs\xspace}
\DeclareMathAlphabet{\smallchanc}{OT1}{pzc}%
                                 {m}{it}
\DeclareFontFamily{OT1}{pzc}{}
\DeclareFontShape{OT1}{pzc}{m}{it}%
             {<-> s * [1.100] pzcmi7t}{}
\DeclareMathAlphabet{\mathchanc}{OT1}{pzc}%
                                 {m}{it}
\newcommand{\mcE}{\mathchanc{E}}
\newcommand{\mcH}{\mathchanc{H}}
\newcommand{\mcJ}{\mathchanc{J}}
\newcommand{\mcL}{\mathchanc{L}}
\newcommand{\myL}{\mcL}
\newcommand{\mcR}{\mathchanc{R}}
\newcommand{\myR}{\mcR\!} 
\newcommand{\mcm}{\mathchanc{m}}
\newcommand{\mco}{\mathchanc{o}}
\newcommand{\mct}{\mathchanc{t}}
\newcommand{\mcx}{\mathchanc{x}}
\DeclareFontFamily{OMS}{rsfs}{\skewchar\font'60}
\DeclareFontShape{OMS}{rsfs}{m}{n}{<-5>rsfs5 <5-7>rsfs7 <7->rsfs10 }{}
\DeclareSymbolFont{rsfs}{OMS}{rsfs}{m}{n}
\DeclareSymbolFontAlphabet{\scr}{rsfs}
\newcommand{\sA}{\scr{A}}
\newcommand{\sF}{\scr{F}}
\newcommand{\sI}{\scr{I}}
\newcommand{\sJ}{\scr{J}}
\newcommand{\sK}{\scr{K}}
\newcommand{\sL}{\scr{L}}
\newcommand{\sM}{\scr{M}}
\newcommand{\sN}{\scr{N}}
\newcommand{\sO}{\scr{O}}
\newcommand{\sfA}{{\sf A}}
\newcommand{\sfB}{{\sf B}}
\newcommand{\sfC}{{\sf C}}
\newcommand{\sfD}{{\sf D}}
\newcommand{\sfJ}{{\sf J}}
\newcommand{\sfK}{{\sf K}}
\newcommand{\sfQ}{{\sf Q}}
\newcommand{\bC}{\mathbb{C}}
\newcommand{\bH}{\mathbb{H}}
\newcommand{\bN}{\mathbb{N}}
\newcommand{\bP}{\mathbb{P}}
\newcommand{\bQ}{\mathbb{Q}}
\newcommand{\bZ}{\mathbb{Z}}
\newcommand{\ff}{\mathfrak{f}}
\newcommand{\al}{\alpha}
\newcommand{\be}{\beta}
\newcommand{\ga}{\gamma}
\newcommand{\ol}{\overline}
\newcommand{\simq}[0]{\sim_{\q}}
\newcommand{\into}{\hookrightarrow}
\newcommand{\wt}{\widetilde}
\newcommand{\what}{\widehat}
\newcommand{\rdown}[1]{\lfloor{#1}\rfloor}
\newcommand{\leteq}{\colon\!\!\!=}
\DeclareMathOperator{\depth}{{depth}} 
\DeclareMathOperator{\diff}{Diff}
\DeclareMathOperator{\Ex}{Ex}
\DeclareMathOperator{\Ext}{Ext}
\newcommand{\sExt}[0]{{\mcE\mcx\mct}}
\DeclareMathOperator{\Hom}{Hom}
\newcommand{\sHom}[0]{{\mcH\mco\mcm}}    
\DeclareMathOperator{\id}{{id}}
\DeclareMathOperator{\im}{{im}}
\DeclareMathOperator{\red}{red}
\DeclareMathOperator{\Spec}{{Spec}}
\DeclareMathOperator{\supp}{{supp}}
\def\spec#1.#2.{{\mathbf S\mathbf p\mathbf e\mathbf c}_{#1}#2}
\def\proj#1.#2.{{\mathbf{Proj}}_{#1}\sum#2}
\def\ring#1.{\scr O_{#1}}
\def\map#1.#2.{#1 \to #2}
\def\longmap#1.#2.{#1 \longrightarrow #2}
\def\factor#1.#2.{\left. \raise 2pt\hbox{$#1$} \right/
        \hskip -2pt\raise -2pt\hbox{$#2$}}
\def\pe#1.{\mathbb P(#1)}
\def\pr#1.{\mathbb P^{#1}}
\newcommand{\bdot}{{{\,\begin{picture}(1,1)(-1,-2)\circle*{2}\end{picture}\ }}}
\newcommand{\clx}[1]{{#1}^{\bdot}}
\newcommand{\Om}{\underline{\Omega}}
\def\coh#1.#2.#3.{H^{#1}(#2,#3)}
\def\dimcoh#1.#2.#3.{h^{#1}(#2,#3)}
\def\hypcoh#1.#2.#3.{\mathbb H_{\vphantom{l}}^{#1}(#2,#3)}
\def\loccoh#1.#2.#3.#4.{H^{#1}_{#2}(#3,#4)}
\def\dimloccoh#1.#2.#3.#4.{h^{#1}_{#2}(#3,#4)}
\def\lochypcoh#1.#2.#3.#4.{\mathbb H^{#1}_{#2}(#3,#4)}
\def\ses#1.#2.#3.{0  \longrightarrow  #1   \longrightarrow 
 #2 \longrightarrow #3 \longrightarrow 0} 
\def\sesshort#1.#2.#3.{0
 \rightarrow #1 \rightarrow #2 \rightarrow #3 \rightarrow 0}
\def\dist#1.#2.#3.{  #1   \longrightarrow 
 #2 \longrightarrow #3 \stackrel{+1}{\longrightarrow} } 
\def\CDdist#1.#2.#3.{  #1   @>>>  #2  @>>>   #3 @>+1>> }  
\def\shortses#1.#2.#3.{0  \rightarrow  #1   \rightarrow 
 #2  \rightarrow   #3 \rightarrow  0}
\def\shortdist#1.#2.#3.{  #1   \rightarrow 
 #2  \rightarrow   #3 \stackrel{+1}{\rightarrow} }  
\def\ddist#1.#2.#3.#4.#5.#6.{\CD
#1 @>>> #2 @>>> #3 @>+1>> \\
@VVV @VVV @VVV \\
#4 @>>> #5 @>>> #6 @>+1>> 
\endCD}
\def\ddistun#1.#2.#3.#4.#5.#6.{\CD
#1 @>>> #2 @>>> #3 @>+1>> \\
@. @VVV @VVV  \\
#4 @>>> #5 @>>> #6 @>+1>> 
\endCD}
\def\Iff#1#2#3{
\hfil\hbox{\hsize =#1
\vtop{\noin #2}
\hskip.5cm 
\lower.5\baselineskip\hbox{$\Leftrightarrow$}\hskip.5cm
\vtop{\noin #3}}\hfil\medskip}
\newcommand{\union}\cup
\newcommand{\intersect}\cap
\newcommand{\Union}\bigcup
\newcommand{\Intersect}\bigcap
\def\myoplus#1.#2.{\underset #1 \to {\overset #2 \to \oplus}}
\newcommand{\resto}{\big\vert_}
\def\ww#1.#2.{\curlywedge
_{#1}^{#2}}
\def\wws#1.#2.{\scriptstyle\ww#1.#2.}
\def\fh#1.#2.{F^{#1} R^{#2}}
\def\epq#1.#2.#3.{E^{#1,#2}_{#3}}
\def\dd#1.#2.#3.{d^{\, #1,#2}_{#3}}
\def\kk#1.#2.#3.{K^{\, #1,#2}_{#3}}
\def\ki#1.#2.#3.{I^{\, #1,#2}_{#3}}
\def\ff#1.#2.{\mathfrak F_{#1}^{#2}}
\def\ffb#1.#2.{\boxed{\mathfrak F_{#1}^{#2}}}
\def\ffbx#1.#2.#3.{\boxed{\mathfrak F_{#1}^{#2}(#3)}}
\def\ffs#1.#2.{\scriptstyle\mathfrak F_{#1}^{#2}}
\def\ffsb#1.#2.{\boxed{\ffs#1.#2.}}
\def\ffsbx#1.#2.#3.{\boxed{\scriptstyle\mathfrak F_{#1}^{#2}(#3)}}
\def\fa#1.#2.{\mathfrak f_{#1}^{#2}}
\def\fab#1.#2.{\boxed{\mathfrak f_{#1}^{#2}}}
\def\ffa#1.#2.{\mathfrak F\mathfrak i\mathfrak l\mathfrak t_{#1}^{#2}}
\def\ffab#1.#2.{\boxed{\mathfrak F\mathfrak i\mathfrak l\mathfrak t_{#1}^{#2}}}
\def\rfi#1.#2.{R^{#1}\functor(#2\,)}
\def\al#1.#2.#3.{\alpha_{#1,#2}^{#3}}
\def\be#1.#2.{\beta_{#1,#2}}
\def\ga#1.#2.{\gamma_{#1,#2}}
\def\yy#1.#2.#3.#4.{y_{#1,#2}^{#3}(#4)}
\def\zz#1.#2.#3.#4.{z_{#1,#2}^{#3}(#4)}
\renewcommand{\o}[0]{{\scr O}} 
\newcommand{\q}[0]{{\mathbb Q}}
\newcommand{\qtq}[1]{\quad\mbox{#1}\quad}
\DeclareMathOperator{\nklt}{nklt}
\begin{document}

\definecolor{tomato}{RGB}{180,62,39}
\definecolor{forrest}{RGB}{81,133,49}    
\definecolor{lighttomato}{RGB}{253,65,65}
\definecolor{lightforrest}{RGB}{145,237,87}    
\definecolor{mygreen}{RGB}{40,104,69}    
\definecolor{mygreen2}{RGB}{3,149,39}    
\definecolor{darkolivegreen}{RGB}{102,118,75}
\definecolor{cranegreen}{RGB}{102,118,75}
\definecolor{mydarkblue}{RGB}{10,92,153}
\definecolor{myblue}{RGB}{57,222,186}
\definecolor{mauve}{RGB}{224, 176, 255}
\definecolor{fuchsia}{RGB}{255, 0, 255}
\definecolor{lavender}{RGB}{230, 230, 250}
\definecolor{pinkish}{RGB}{213,83,222}
\definecolor{colD}{RGB}{213,83,222}
\definecolor{defb}{RGB}{213,83,222}
\definecolor{goldenrod}{RGB}{225,115,69}

\makeatletter
\newenvironment{refmr}{}{}
\renewcommand{\labelenumi}{\hskip .5em(\thethm.\arabic{enumi})}
\setenumerate[1]{leftmargin=*,parsep=0em,itemsep=0.125em,topsep=0.125em}

\renewcommand\thesubsection{\thesection.\Alph{subsection}}
\renewcommand\subsection{
  \renewcommand{\sfdefault}{pag}
  \@startsection{subsection}%
  {2}{0pt}{-\baselineskip}{.2\baselineskip}{\raggedright
    \sffamily\itshape\small
  }}
\renewcommand\section{
  \renewcommand{\sfdefault}{phv}
  \@startsection{section} %
  {1}{0pt}{\baselineskip}{.2\baselineskip}{\centering
    \sffamily
    \scshape
}}
\newcounter{lastyear}\setcounter{lastyear}{\the\year}
\addtocounter{lastyear}{-1}
\newcommand\sideremark[1]{%
\normalmarginpar
\marginpar
[
\hskip .45in
\begin{minipage}{.75in}
  \begin{color}{red}
    \tiny #1
  \end{color}
\end{minipage}
]
{
\hskip -.075in
\begin{minipage}{.75in}
  \begin{color}{red}
    \tiny #1
  \end{color}
\end{minipage}
}}
\newcommand\rsideremark[1]{
\reversemarginpar
\marginpar
[
\hskip .45in
\begin{minipage}{.75in}
  \begin{color}{red}
    \tiny #1
  \end{color}
\end{minipage}
]
{
\hskip -.075in
\begin{minipage}{.75in}
  \begin{color}{red}
    \tiny #1
  \end{color}
\end{minipage}
}}

\newcommand\noin{\noindent}
\newcommand\hugeskip{\bigskip\bigskip\bigskip}
\newcommand\smc{\sc}
\newcommand\dsize{\displaystyle}
\newcommand\sh{\subheading}
\newcommand\nl{\newline}
\newcommand\input /home/kovacs/tex/latex/{\input /home/kovacs/tex/latex/} 
\newcommand\Get{\Input /home/kovacs/tex/latex/} 
\newcommand\toappear{\rm (to appear)}
\newcommand\mycite[1]{[#1]}
\newcommand\myref[1]{(\ref{#1})}
\newcommand\myli{\hfill\newline\smallskip\noindent{$\bullet$}\quad}
\newcommand\vol[1]{{\bf #1}\ } 
\newcommand\yr[1]{\rm (#1)\ } 
\newcommand\cf{cf.\ \cite}
\newcommand\mycf{cf.\ \mycite}
\newcommand\te{there exist}
\newcommand\st{such that}
\newcommand\myskip{3pt}
\newtheoremstyle{bozont}{3pt}{3pt}%
     {\itshape}
     {}
     {\bfseries}
     {.}
     {.5em}
     {\thmname{#1}\thmnumber{ #2}\thmnote{ \rm #3}}
\newtheoremstyle{bozont-sf}{3pt}{3pt}%
     {\itshape}
     {}
     {\sffamily}
     {.}
     {.5em}
     {\thmname{#1}\thmnumber{ #2}\thmnote{ \rm #3}}
\newtheoremstyle{bozont-sc}{3pt}{3pt}%
     {\itshape}
     {}
     {\scshape}
     {.}
     {.5em}
     {\thmname{#1}\thmnumber{ #2}\thmnote{ \rm #3}}
\newtheoremstyle{bozont-remark}{3pt}{3pt}%
     {}
     {}
     {\scshape}
     {.}
     {.5em}
     {\thmname{#1}\thmnumber{ #2}\thmnote{ \rm #3}}
\newtheoremstyle{bozont-def}{3pt}{3pt}%
     {}
     {}
     {\bfseries}
     {.}
     {.5em}
     {\thmname{#1}\thmnumber{ #2}\thmnote{ \rm #3}}
\newtheoremstyle{bozont-reverse}{3pt}{3pt}%
     {\itshape}
     {}
     {\bfseries}
     {.}
     {.5em}
     {\thmnumber{#2.}\thmname{ #1}\thmnote{ \rm #3}}
\newtheoremstyle{bozont-reverse-sc}{3pt}{3pt}%
     {\itshape}
     {}
     {\scshape}
     {.}
     {.5em}
     {\thmnumber{#2.}\thmname{ #1}\thmnote{ \rm #3}}
\newtheoremstyle{bozont-reverse-sf}{3pt}{3pt}%
     {\itshape}
     {}
     {\sffamily}
     {.}
     {.5em}
     {\thmnumber{#2.}\thmname{ #1}\thmnote{ \rm #3}}
\newtheoremstyle{bozont-remark-reverse}{3pt}{3pt}%
     {}
     {}
     {\sc}
     {.}
     {.5em}
     {\thmnumber{#2.}\thmname{ #1}\thmnote{ \rm #3}}
\newtheoremstyle{bozont-def-reverse}{3pt}{3pt}%
     {}
     {}
     {\bfseries}
     {.}
     {.5em}
     {\thmnumber{#2.}\thmname{ #1}\thmnote{ \rm #3}}
\newtheoremstyle{bozont-def-newnum-bf}{3pt}{3pt}%
     {}
     {}
     {\bfseries}
     {}
     {.5em}
     {\thmnumber{#2.}\thmname{ #1}\thmnote{ \rm #3}}
\newtheoremstyle{bozont-def-newnum-rm}{3pt}{3pt}%
     {}
     {}
     {\rmfamily}
     {}
     {.5em}
     {\thmnumber{#2.}\thmname{ #1}\thmnote{ \rm #3}}
\theoremstyle{bozont}    
\ifnum \value{are-there-sections}=0 {%
  \newtheorem{proclaim}{Theorem}
} 
\else {%
  \newtheorem{proclaim}{Theorem}[section]
} 
\fi
\newtheorem{thm}[proclaim]{Theorem}
\newtheorem{mainthm}[proclaim]{Main Theorem}
\newtheorem{cor}[proclaim]{Corollary} 
\newtheorem{cors}[proclaim]{Corollaries} 
\newtheorem{lem}[proclaim]{Lemma} 
\newtheorem{prop}[proclaim]{Proposition} 
\newtheorem{conj}[proclaim]{Conjecture}
\newtheorem{subproclaim}[equation]{Theorem}
\newtheorem{subthm}[equation]{Theorem}
\newtheorem{subcor}[equation]{Corollary} 
\newtheorem{sublem}[equation]{Lemma} 
\newtheorem{subprop}[equation]{Proposition} 
\newtheorem{subconj}[equation]{Conjecture}
\theoremstyle{bozont-sc}
\newtheorem{proclaim-special}[proclaim]{\specialthmname}
\newenvironment{proclaimspecial}[1]
     {\def\specialthmname{#1}\begin{proclaim-special}}
     {\end{proclaim-special}}
\theoremstyle{bozont-remark}
\newtheorem{rem}[proclaim]{Remark}
\newtheorem{subrem}[equation]{Remark}
\newtheorem{notation}[proclaim]{Notation} 
\newtheorem{assume}[proclaim]{Assumptions} 
\newtheorem{obs}[proclaim]{Observation} 
\newtheorem{example}[proclaim]{Example} 
\newtheorem{examples}[proclaim]{Examples} 
\newtheorem{complem}[equation]{Complement}
\newtheorem{const}[proclaim]{Construction}   
\newtheorem{ex}[proclaim]{Exercise} 
\newtheorem{subnotation}[equation]{Notation} 
\newtheorem{subassume}[equation]{Assumptions} 
\newtheorem{subobs}[equation]{Observation} 
\newtheorem{subexample}[equation]{Example} 
\newtheorem{subex}[equation]{Exercise} 
\newtheorem{claim}[proclaim]{Claim} 
\newtheorem*{claim*}{Claim} 
\newtheorem{inclaim}[equation]{Claim} 
\newtheorem{subclaim}[equation]{Claim} 
\newtheorem{case}{Case} 
\newtheorem{subcase}{Subcase}   
\newtheorem{step}{Step}
\newtheorem{approach}{Approach}
\newtheorem{fact}{Fact}
\newtheorem{subsay}{}
\newtheorem*{SubHeading*}{\SubHeadingName}%
\newtheorem{SubHeading}[proclaim]{\SubHeadingName}
\newtheorem{sSubHeading}[equation]{\sSubHeadingName}
\newenvironment{demo}[1] {\def\SubHeadingName{#1}\begin{SubHeading}}
  {\end{SubHeading}}%
\newenvironment{subdemo}[1]{\def\sSubHeadingName{#1}\begin{sSubHeading}}
  {\end{sSubHeading}} %
\newenvironment{demo-r}[1]{\def\SubHeadingName{#1}\begin{SubHeading-r}}
  {\end{SubHeading-r}}%
\newenvironment{subdemo-r}[1]{\def\sSubHeadingName{#1}\begin{sSubHeading-r}}
  {\end{sSubHeading-r}} %
\newenvironment{demo*}[1]{\def\SubHeadingName{#1}\begin{SubHeading*}}
  {\end{SubHeading*}}%
\newtheorem{SubHeadingEmpty}[proclaim]{\SubHeadingName}
\newtheorem{sSubHeadingEmpty}[equation]{\SubHeadingName}
\newenvironment{numero}{\def\SubHeadingName{}\begin{SubHeadingEmpty}}
  {\end{SubHeadingEmpty}}%
\newenvironment{subnumero}{\def\SubHeadingName{}\begin{sSubHeadingEmpty}}
  {\end{sSubHeadingEmpty}}%
\newtheorem{defini}[proclaim]{Definition}
\newtheorem{question}[proclaim]{Question}
\newtheorem{subquestion}[equation]{Question}
\newtheorem{crit}[proclaim]{Criterion}
\newtheorem{pitfall}[proclaim]{Pitfall}
\newtheorem{addition}[proclaim]{Addition}
\newtheorem{principle}[proclaim]{Principle} 
\newtheorem{condition}[proclaim]{Condition}
\newtheorem{say}[proclaim]{}
\newtheorem{exmp}[proclaim]{Example}
\newtheorem{hint}[proclaim]{Hint}
\newtheorem{exrc}[proclaim]{Exercise}
\newtheorem{prob}[proclaim]{Problem}
\newtheorem{ques}[proclaim]{Question}    
\newtheorem{alg}[proclaim]{Algorithm}
\newtheorem{remk}[proclaim]{Remark}          
\newtheorem{note}[proclaim]{Note}            
\newtheorem{summ}[proclaim]{Summary}         
\newtheorem{notationk}[proclaim]{Notation}   
\newtheorem{warning}[proclaim]{Warning}  
\newtheorem{defn-thm}[proclaim]{Definition--Theorem}  
\newtheorem{convention}[proclaim]{Convention}  
\newtheorem*{ack}{Acknowledgments}
\theoremstyle{bozont-def}    
\newtheorem{defn}[proclaim]{Definition}
\newtheorem{subdefn}[equation]{Definition}
\theoremstyle{bozont-reverse}    
\newtheorem{corr}[proclaim]{Corollary} 
\newtheorem{lemr}[proclaim]{Lemma} 
\newtheorem{propr}[proclaim]{Proposition} 
\newtheorem{conjr}[proclaim]{Conjecture}
\theoremstyle{bozont-reverse-sc}
\newtheorem{proclaimr-special}[proclaim]{\specialthmname}
\newenvironment{proclaimspecialr}[1]%
{\def\specialthmname{#1}\begin{proclaimr-special}}%
{\end{proclaimr-special}}
\theoremstyle{bozont-remark-reverse}
\newtheorem{remr}[proclaim]{Remark}
\newtheorem{subremr}[equation]{Remark}
\newtheorem{notationr}[proclaim]{Notation} 
\newtheorem{assumer}[proclaim]{Assumptions} 
\newtheorem{obsr}[proclaim]{Observation} 
\newtheorem{exampler}[proclaim]{Example} 
\newtheorem{exr}[proclaim]{Exercise} 
\newtheorem{claimr}[proclaim]{Claim} 
\newtheorem{inclaimr}[equation]{Claim} 
\newtheorem{SubHeading-r}[proclaim]{\SubHeadingName}
\newtheorem{sSubHeading-r}[equation]{\sSubHeadingName}
\newtheorem{SubHeadingr}[proclaim]{\SubHeadingName}
\newtheorem{sSubHeadingr}[equation]{\sSubHeadingName}
\newenvironment{demor}[1]{\def\SubHeadingName{#1}\begin{SubHeadingr}}{\end{SubHeadingr}}
\newtheorem{definir}[proclaim]{Definition}
\theoremstyle{bozont-def-newnum-bf}    
\newtheorem{newnumbf}[proclaim]{}
\newtheorem{subnewnumbf}[equation]{}
\theoremstyle{bozont-def-newnum-rm}    
\newtheorem{newnumrm}[proclaim]{}
\newtheorem{subnewnumrm}[equation]{}
\theoremstyle{bozont-def-reverse}    
\newtheorem{defnr}[proclaim]{Definition}
\newtheorem{questionr}[proclaim]{Question}
\newtheorem{newnumspecial}[proclaim]{\specialnewnumname}
\newtheorem{subnewnumspecial}[equation]{\specialnewnumname}
\newenvironment{newnum}[1]{\def\specialnewnumname{#1}\begin{newnumspecial}}{\end{newnumspecial}}
\numberwithin{equation}{proclaim}
\numberwithin{figure}{section} 
\newcommand\equinsect{\numberwithin{equation}{section}}
\newcommand\equinthm{\numberwithin{equation}{proclaim}}
\newcommand\figinthm{\numberwithin{figure}{proclaim}}
\newcommand\figinsect{\numberwithin{figure}{section}}

\newcommand{\num}{\arabic{section}.\arabic{proclaim}}
\newenvironment{pf}{\smallskip \noindent {\sc Proof. }}{\qed\smallskip}
\newcommand\Section[1]
        { 
        \setcounter{firstsubsection}{0}
        \setcounter{proclaim}{0}
        \section{#1}
        \numberwithin{proclaim}{section}
        \numberwithin{equation}{proclaim}
        \numberwithin{figure}{proclaim} 
        \rm\noindent
        }
\newcounter{firstsubsection} %
\setcounter{firstsubsection}{0} %
\newcommand\Subsection[1] 
      { %
        \ifnum \value{firstsubsection}=0 %
        \setcounter{subsection}{\value{proclaim}} %
        \setcounter{firstsubsection}{1} %
        \fi %
        \noin\subsection{#1} %
        \numberwithin{proclaim}{subsection} 
        \numberwithin{equation}{subsection} 
        \numberwithin{figure}{subsection} %
        }
\def\endSubsection
        {
        \setcounter{firstsubsection}{0}
        \setcounter{proclaim}{\value{subsection}}
        \numberwithin{proclaim}{section}
        \numberwithin{equation}{proclaim}
        \numberwithin{figure}{proclaim} 
        }
\newcounter{rosternumber}
\newenvironment{roster} {\begin{enumerate}
    \renewcommand{\labelenumi}{\refstepcounter{rosternumber}(\therosternumber)}}
  {\end{enumerate}}
\newenvironment{narrowroster}
{\vskip2pt\noindent\phantom{\quad}\begin{minipage}{4.75in}\begin{enumerate}
      \renewcommand{\labelenumi}{\refstepcounter{rosternumber}(\therosternumber)}}
    {\end{enumerate}\end{minipage}}
\newenvironment{Roster} {\begin{enumerate}\renewcommand{\labelenumi}{
      \refstepcounter{proclaim}(\arabic{section}.\arabic{subsection}.\arabic{proclaim})}}{\end{enumerate}} 
\newenvironment{bRoster} {\begin{enumerate}\renewcommand{\labelenumi}{
      \refstepcounter{proclaim}({\bf\theproclaim})}}{\end{enumerate}}
\def\rostitem{\refstepcounter{rosternumber}
        \item[(\therosternumber)]
        }
\newenvironment{enumerate-p}{
  \begin{enumerate}}
  {\setcounter{equation}{\value{enumi}}\end{enumerate}}
\newenvironment{enumerate-cont}{
  \begin{enumerate}
    {\setcounter{enumi}{\value{equation}}}}
  {\setcounter{equation}{\value{enumi}}
  \end{enumerate}}
\let\lenumi\labelenumi
\newcommand{\rmlabels}{\renewcommand{\labelenumi}{\rm \lenumi}}
\newcommand{\rmlabelsoff}{\renewcommand{\labelenumi}{\lenumi}}
\newenvironment{heading}{\begin{center} \sc}{\end{center}}
\newcommand\subheading[1]{\smallskip\noindent{{\bf #1.}\ }}
\newlength{\swidth}
\setlength{\swidth}{\textwidth}
\addtolength{\swidth}{-,5\parindent}
\newenvironment{narrow}{
  \medskip\noindent\hfill\begin{minipage}{\swidth}}
  {\end{minipage}\medskip}
\newcommand\nospace{\hskip-.5ex}
\makeatother

\title{Log canonical singularities are Du Bois}
\author{J\'anos Koll\'ar and \me}
\date{\today}
\thanks{J\'anos Koll\'ar was supported in part by NSF Grant DMS-0758275. \\ 
\indent\mythanksjoint}
\address{JK: Department of Mathematics, Princeton University, Fine
  Hall, Washington Road, Princeton, NJ 08544-1000, USA} 
\email{kollar@math.princeton.edu}
\urladdr{http://www.math.princeton.edu/$\sim$kollar\xspace}
\address{SK: \myaddress}
\email{\myemail}
\urladdr{\myurladdr}
%
\subjclass[2000]{14J17,14B07,14E30,14D99}
\maketitle
\newcommand\structure{minimal qlc structure\xspace}
\newcommand\stratification{$f$-qlc stratification\xspace}
\newcommand\stratum{$f$-qlc stratum\xspace}
\newcommand\strata{$f$-qlc strata\xspace}
\newcommand\HX{\mcH_{X,f}}
\newcommand\uj[1]{%
  \begin{color}%
    {red}
  #1%
  \end{color}%
}
\newcommand\ujj[1]{%
  \begin{color}%
    {red}
  #1%
  \end{color}%
}
\newcommand\ujjj[1]{%
  \begin{color}%
    {red}
  #1%
  \end{color}%
}
\newcommand\ujv[1]{%
  \begin{color}%
    {red}
  #1%
  \end{color}%
}
\newcommand\ujvj[1]{%
  \begin{color}%
    {red}
  #1%
  \end{color}%
}
\renewcommand\uj{}
\renewcommand\ujj{}
\renewcommand\ujjj{}
\renewcommand\ujv{}
\renewcommand\ujvj{}

\section{Introduction}

\noindent
A recurring difficulty in the Minimal Model Program (MMP) is that while log terminal
singularities are quite well behaved (for instance, they are rational), \emph{log
  canonical} singularities are much more complicated; they need not even be
Cohen-Macaulay. The aim of this paper is to prove that, as conjectured in
\cite[1.13]{K+92}, log canonical singularities are \emph{Du Bois}. 
The concept of Du Bois singularities, abbreviated as \emph{DB}, was introduced by
Steenbrink in \cite{Steenbrink83} as a weakening of rationality.  It is not clear how
to define Du~Bois singularities in positive characteristic, so we work over a field
of characteristic $0$ throughout the paper.  The precise definition is rather
involved, see \eqref{def:DB}, but our main applications rely only on the following
consequence:
\begin{cor}
  Let $X$ be a proper scheme of finite type over $\bC$.  If $(X,\Delta)$ is log
  canonical for some $\bQ$-divisor $\Delta$, then the natural map
  $$
  H^i(X^{\rm an},\bC)\to H^i(X^{\rm an},\sO_{X^{\rm an}})\cong  H^i(X,\sO_{X})
  $$
  is surjective for all $i$.
\end{cor}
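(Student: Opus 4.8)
The plan is to obtain this purely formally from the main result of the paper — that a scheme admitting a $\bQ$-divisor $\Delta$ with $(X,\Delta)$ log canonical is Du~Bois — together with standard Hodge theory of the Deligne--Du~Bois complex. Recall that for $X$ of finite type over $\bC$ one has a functorial filtered complex $\bigl(\underline{\Omega}_X^{\bullet},F\bigr)$ equipped with a natural quasi-isomorphism $\bC_X \to \underline{\Omega}_X^{\bullet}$ on $X^{\rm an}$; the object $\underline{\Omega}_X^0$ appearing in \eqref{def:DB} is the graded piece $\Gr_F^0\underline{\Omega}_X^{\bullet}$, and it carries a natural map $\sO_X \to \underline{\Omega}_X^0$ which, by construction, fits into a commutative triangle with the structure maps $\bC_X \to \sO_X$ and $\bC_X \to \underline{\Omega}_X^{\bullet} \to \Gr_F^0\underline{\Omega}_X^{\bullet}$.

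First I would invoke Du~Bois's theorem that, for $X$ \emph{proper} over $\bC$, the spectral sequence
$$
E_1^{p,q} = \bH^{p+q}\!\bigl(X,\Gr_F^p\underline{\Omega}_X^{\bullet}\bigr) \Longrightarrow \bH^{p+q}\!\bigl(X,\underline{\Omega}_X^{\bullet}\bigr) = H^{p+q}(X^{\rm an},\bC)
$$
degenerates at $E_1$. Hence its edge homomorphism $H^i(X^{\rm an},\bC) \to E_1^{0,i} = \bH^i\bigl(X,\underline{\Omega}_X^0\bigr)$ is surjective for every $i$; concretely, $\bH^i(X,\underline{\Omega}_X^0)$ is the lowest graded piece $\Gr_F^0 H^i(X^{\rm an},\bC)$ of the induced (Hodge) filtration, and the map onto it is just the quotient.

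Next I would feed in the main theorem: as $(X,\Delta)$ is log canonical, $X$ is Du~Bois, i.e.\ $\sO_X \to \underline{\Omega}_X^0$ is a quasi-isomorphism. Thus $\bH^i(X,\underline{\Omega}_X^0) \cong \bH^i(X,\sO_X) = H^i(X,\sO_X)$, and the commutative triangle above identifies the surjection of the previous paragraph with the natural map $H^i(X^{\rm an},\bC) \to H^i(X,\sO_X)$ induced by $\bC_X \to \sO_X$. Finally, Serre's GAGA gives $H^i(X,\sO_X) \cong H^i(X^{\rm an},\sO_{X^{\rm an}})$ since $X$ is proper, and the claim follows.

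The only substantive ingredient is the main theorem; everything else is formal. The step that needs the most care in writing up is the last compatibility — that the Du~Bois edge map agrees with the naive map induced by $\bC_X \to \sO_X$, i.e.\ that the triangle relating $\bC_X$, $\sO_X$ and $\underline{\Omega}_X^0$ commutes — but this is immediate from the hyperresolution construction of $\underline{\Omega}_X^{\bullet}$.
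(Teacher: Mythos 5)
Your argument is correct and is exactly the route the paper intends (the paper states this corollary without writing out a proof, citing only that DB singularities "enjoy some of the nice Hodge-theoretic properties of smooth varieties"): apply the main theorem to get that $X$ is DB, then use Du~Bois's $E_1$-degeneration of the Hodge--Du~Bois spectral sequence for proper $X$ so that $H^i(X^{\rm an},\bC)$ surjects onto $\bH^i(X,\Om^0_X)\cong H^i(X,\sO_X)$, together with the standard compatibility of this edge map with $\bC_X\to\sO_X$ and GAGA. No gaps.
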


Using \cite[Lemme 1]{MR0376678} this implies the following:

\begin{cor}
  \label{thm:coh-inv}
  Let $\phi:X\to B$ be a proper, flat morphism of complex varieties with $B$
  connected.  Assume that for all $b\in B$ there exists a $\bQ$-divisor $D_b$ on
  $X_b$ such that $(X_b,D_b)$ is log canonical.  Then $h^i(X_b, \sO_{X_b})$ is
  independent of $b\in B$ for all $i$.
\end{cor}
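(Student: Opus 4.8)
The plan is to deduce the statement from the first Corollary together with \cite[Lemme 1]{MR0376678}. Since $B$ is connected, it is enough to prove that each function $b\mapsto h^i(X_b,\sO_{X_b})$ is locally constant on $B$. One half of this is automatic: $\phi$ is flat and proper, so $R^i\phi_*\sO_X$ is coherent and $b\mapsto h^i(X_b,\sO_{X_b})$ is upper semicontinuous (semicontinuity of coherent cohomology in flat proper families); what remains is to rule out jumping, i.e.\ to show that $R^i\phi_*\sO_X$ is locally free with formation commuting with base change.

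To this end, note that every fibre already satisfies the hypotheses of the first Corollary: $X_b$ is proper of finite type over $\bC$, and by assumption $(X_b,D_b)$ is log canonical for a $\bQ$-divisor $D_b$. Hence, for all $i$ and all $b\in B$, the natural map
$$
H^i(X_b^{\rm an},\bC)\longrightarrow H^i(X_b,\sO_{X_b})
$$
is surjective. Thus $\phi\colon X\to B$ is a proper flat family on every fibre of which singular cohomology surjects onto the cohomology of the structure sheaf — exactly the input required by \cite[Lemme 1]{MR0376678}. That lemma then gives that $R^i\phi_*\sO_X$ is locally free and compatible with arbitrary base change, whence $h^i(X_b,\sO_{X_b})=\rk R^i\phi_*\sO_X$ is locally constant, and therefore constant because $B$ is connected.

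The real content is concentrated in \cite[Lemme 1]{MR0376678}, and this is the step I expect to be delicate. The surjectivity by itself, via the crude bound $h^i(X_b,\sO_{X_b})\le\dim_{\bC}H^i(X_b^{\rm an},\bC)$, is not enough, because the Betti numbers of the fibres of a proper family are neither upper nor lower semicontinuous (vanishing cycles can enlarge the general fibre, while a reducible special fibre can carry extra classes), so semicontinuity of $h^i(\sO)$ cannot simply be played off against that of the Betti numbers. What makes the lemma go through is that the surjection identifies $h^i(X_b,\sO_{X_b})$ with $\dim_{\bC}\operatorname{Gr}^0_F H^i(X_b^{\rm an},\bC)$, the bottom Hodge piece of the mixed Hodge structure of the fibre; and that, the fibres being Du Bois, the degree-zero part of the Du Bois complex is well behaved in the family, so the attendant Hodge–de Rham spectral sequence degenerates uniformly and these bottom Hodge numbers are deformation invariant. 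Granting the lemma, the remaining ingredients — semicontinuity, the first Corollary applied fibrewise, and the passage from local to global constancy — are purely formal.
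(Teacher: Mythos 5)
Your proposal is exactly the paper's (implicit) argument: the paper deduces this corollary by applying the preceding surjectivity statement to each fiber $(X_b,D_b)$ and then invoking \cite[Lemme 1]{MR0376678} to get local freeness and base-change compatibility of $\myR^i\phi_*\sO_X$, hence constancy of $h^i(X_b,\sO_{X_b})$ over the connected base. Your additional remarks on semicontinuity and the Hodge-theoretic content of the lemma are sound commentary but not needed beyond that citation.
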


Notice that we do not require that the divisors $D_b$ form a family.

We also prove flatness of the cohomology sheaves of the relative dualizing complex of
a projective family of log canonical varieties \eqref{thm:flat}. Combining this
result with a Serre duality type criterion \eqref{thm:serre-duality} gives another
invariance property:

\begin{cor}
  \label{thm:cm-inv}
  Let $\phi:X\to B$ be a flat, projective morphism, $B$ connected. Assume that for all
  $b\in B$ there exists a $\bQ$-divisor $D_b$ on $X_b$ such that $(X_b,D_b)$ is log
  canonical.

  Then, if one fiber of $\phi$ is Cohen-Macaulay (resp.\ $S_k$ for some $k$), then
  all fibers are Cohen-Macaulay (resp.\ $S_k$).
\end{cor}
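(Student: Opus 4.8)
The plan is to read the Cohen--Macaulay and the $S_k$ property of the fibres off the cohomology sheaves of the relative dualizing complex, and then to feed in \eqref{thm:flat}. I would first reduce to the case $B$ reduced (this changes neither hypothesis nor conclusion) and note that, $\phi$ being flat and projective with $B$ connected, all fibres $X_b$ are non-empty of one fixed pure dimension $n$ (constant by flatness and connectedness, and each $X_b$ equidimensional since it carries a log canonical pair). Write $\omega^{\bullet}_{X/B}=\phi^{!}\mathcal{O}_B$ for the relative dualizing complex, normalized so that $\omega^{\bullet}_{X_b}\cong L\iota_b^{*}\omega^{\bullet}_{X/B}$, where $\iota_b\colon X_b\hookrightarrow X$. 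By \eqref{thm:flat} every $h^{j}(\omega^{\bullet}_{X/B})$ is flat over $B$; as all of these sheaves are flat, $L\iota_b^{*}\omega^{\bullet}_{X/B}$ carries no higher $\operatorname{Tor}$, so
$$h^{j}(\omega^{\bullet}_{X_b})\;\cong\;h^{j}(\omega^{\bullet}_{X/B})\otimes_{\mathcal{O}_B}k(b)\;=\;h^{j}(\omega^{\bullet}_{X/B})|_{X_b}\qquad\text{for all }j,\ b.$$

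Next I would invoke \eqref{thm:serre-duality}, which characterizes these properties for a projective scheme $Y$ of pure dimension $n$ over a field via the supports of the cohomology sheaves of $\omega^{\bullet}_{Y}$: namely $Y$ is Cohen--Macaulay iff $h^{j}(\omega^{\bullet}_{Y})=0$ for all $j\neq-n$, and $Y$ is $S_k$ iff $\dim\operatorname{Supp}h^{-i}(\omega^{\bullet}_{Y})\le i-k$ for all $i<n$. In view of the displayed isomorphism, the corollary then reduces to two statements about a coherent sheaf $\mathcal{G}$ on $X$ flat over $B$ (with $\phi$ proper and $B$ reduced, connected): \emph{(a)} if $\mathcal{G}|_{X_{b_0}}=0$ for some $b_0$, then $\mathcal{G}=0$; and \emph{(b)} $b\mapsto\dim\operatorname{Supp}(\mathcal{G}|_{X_b})$ is constant on $B$. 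Indeed, \emph{(a)} applied to $\mathcal{G}=h^{j}(\omega^{\bullet}_{X/B})$ for $j\neq-n$ yields the Cohen--Macaulay assertion, and \emph{(b)} applied to $\mathcal{G}=h^{-i}(\omega^{\bullet}_{X/B})$ for each $i<n$ yields the $S_k$ assertion.

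Statement \emph{(a)} is soft: working one irreducible component of $B$ at a time (they form a connected chain), flatness of $\mathcal{G}$ over the integral scheme $B$ forces $\operatorname{Ass}_X(\mathcal{G})$ to lie over the generic point of $B$, so if $\mathcal{G}\neq0$ then $\operatorname{Supp}\mathcal{G}$ meets the generic fibre, and $\phi(\operatorname{Supp}\mathcal{G})$ — closed by properness — is all of $B$, i.e.\ $\mathcal{G}|_{X_b}\neq0$ for every $b$. For \emph{(b)}: upper semicontinuity of fibre dimension for the proper map $(\operatorname{Supp}\mathcal{G})_{\mathrm{red}}\to B$ gives $\dim\operatorname{Supp}(\mathcal{G}|_{X_b})\ge\dim\operatorname{Supp}(\mathcal{G}|_{X_{b'}})$ whenever $b$ specializes $b'$; for the reverse, pull back along a trait $\Spec R\to B$ carrying the generic point to $b'$ and the closed point to $b$. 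Then $B$-flatness makes the uniformizer a nonzerodivisor on the pullback $\mathcal{G}_R$, so every point of $\operatorname{Supp}(\mathcal{G}_R|_{X_0})$ is a specialization of a point of $\operatorname{Supp}(\mathcal{G}_R|_{X_\eta})$ and hence lies in the closure $W$ of the latter; as $W$ has no component contained in the central fibre it is flat over $R$, so $\dim W_0=\dim W_\eta$ and $\dim\operatorname{Supp}(\mathcal{G}_R|_{X_0})\le\dim W_0=\dim\operatorname{Supp}(\mathcal{G}_R|_{X_\eta})$. Thus the function is constant along specializations; being also constructible, it is locally constant, hence constant on the connected $B$.

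The dualizing-complex and base-change formalism is routine once \eqref{thm:flat} is available, and the Cohen--Macaulay case uses only the soft statement \emph{(a)}. The step I expect to be the real obstacle is \emph{(b)} — showing that the dimension of the support of a $B$-flat coherent sheaf does not jump in the family — and within it the trait reduction establishing that this dimension cannot drop along a specialization, which is exactly where the $B$-flatness of the sheaves $h^{-i}(\omega^{\bullet}_{X/B})$ furnished by \eqref{thm:flat} is genuinely used.
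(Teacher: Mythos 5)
Your proof is correct, and it follows the paper's overall strategy --- read off $S_k$ from the cohomology sheaves of the (relative) dualizing complex and use their $B$-flatness --- but the implementation differs at the key step, so a comparison is worthwhile. Two bookkeeping points first: to invoke \eqref{thm:flat} you must note that the fibres are DB, which is exactly the main theorem \eqref{thm:main-weak}/\eqref{thm:main-proof}; and the criterion you attribute to \eqref{thm:serre-duality} is not the one stated there. What you use is the global criterion of \cite[5.72]{KM98} applied fibrewise ($X_b$ is $S_k$ iff $\dim\supp h^{-i}(\omega^\bdot_{X_b})\le i-k$ for $i<\dim X_b$), whereas \eqref{thm:serre-duality} is a pointwise statement about stalks of the relative sheaves, namely that $X_b$ is $S_k$ at $x$ iff $h^{-i}(\omega^\bdot_\phi)_x=0$ for $i<k$. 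This is where the routes diverge. The paper proves flatness and base change together in \eqref{thm:coh-base-change} via \cite{MR555258}, and then in \eqref{thm:local-invariance} needs only the soft fact that every component of the support of a $B$-flat sheaf dominates $B$, so that the loci where $h^{-i}(\omega^\bdot_\phi)$, $i<k$, are nonzero meet either every fibre or none. You instead deduce base change from \eqref{thm:flat} by the hyper-Tor spectral sequence (valid, since all cohomology sheaves of the complex are $B$-flat), and then need the stronger statement (b) that $\dim\supp(\sG|_{X_b})$ is constant for a $B$-flat coherent sheaf under a proper map; your proof of (b) --- Chevalley semicontinuity for one inequality, pullback to a DVR and flatness of the closure of the generic-fibre support for the other --- is correct. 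What your version buys is that it verifies the full $S_k$ condition, including the constraints $\dim\supp h^{-i}(\omega^\bdot_{X_b})\le i-k$ for $k\le i<n$, which involve non-closed points and are not visible to a stalk-vanishing criterion at closed points; the price is the extra work in (b). What the paper's version buys is brevity, since domination of $B$ by the support is all it requires.
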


\begin{subrem}
  The $S_k$ case of this result answers a question posed to us by Valery Alexeev and
  Christopher Hacon.
\end{subrem}

For arbitrary flat, proper morphisms, the set of fibers that are Cohen-Macaulay
(resp.\ $S_k$) is open, but not necessarily closed. Thus the key point of
(\ref{thm:cm-inv}) is to show that this set is also closed.

The generalization of these results to the semi log canonical case turns out to be
not hard, but it needs some foundational work which will be presented elsewhere.  The
general case then implies that each connected component of the moduli space of stable
log varieties parametrizes either only Cohen-Macaulay or only non-Cohen-Macaulay
objects.

Let us first state a simplified version of our main theorem:

\begin{thm}\label{thm:main-weak}
  Let $({{X}},\Delta)$ be an lc pair.  Then $X$ is DB.
  More generally, let $W\subset X$ be a reduced, closed subscheme that is a union of
  log canonical centers of $({X},\Delta)$.  Then $W$ is DB.
\end{thm}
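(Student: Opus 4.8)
The plan is to establish the stronger assertion that $(X,W)$ is a \emph{Du Bois pair}: writing $\underline{\Omega}^0_{X,W}$ for the zeroth Du Bois complex of the pair (so that there is a triangle $\underline{\Omega}^0_{X,W}\to\underline{\Omega}^0_X\to\underline{\Omega}^0_W\xrightarrow{+1}$ mapping compatibly onto $\mathcal{I}_{W\subseteq X}\to\sO_X\to\sO_W\xrightarrow{+1}$), the claim is that $\mathcal{I}_{W\subseteq X}\to\underline{\Omega}^0_{X,W}$ is a quasi-isomorphism. This single statement yields the theorem: taking $W=\varnothing$ gives that $X$ is DB, and then comparing the two triangles forces $\underline{\Omega}^0_W\simeq\sO_W$, i.e.\ $W$ is DB. I would prove ``$(X,W)$ is a Du Bois pair'' by induction on $\dim X$, the case $\dim X=0$ being trivial. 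Since an lc centre of $(X,\Delta)$ always has dimension $<\dim X$, every lc centre of $(X,\Delta)$ — in particular $W$ — comes, via Kawamata's subadjunction (the normalisation of a minimal lc centre carries a ``different'' making it again log canonical of smaller dimension), with the structure of a union of lc centres of strictly lower-dimensional lc pairs, so the inductive hypothesis applies to it and to the lower-dimensional pairs occurring below.

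The inductive step uses the pair version of Kov\'acs's splitting criterion: $(X,W)$ is a Du Bois pair provided the natural map $\mathcal{I}_{W\subseteq X}\to\underline{\Omega}^0_{X,W}$ admits a left inverse in $D^b_{\mathrm{coh}}(X)$. To produce one, I would choose a log resolution $f\colon Y\to X$ of $(X,\Delta)$ adapted to $W$, so that $\Sigma:=f^{-1}(W)_{\mathrm{red}}$ is a divisor which, together with $\mathrm{Exc}(f)$ and the birational transform of $\mathrm{Supp}\,\Delta$, has simple normal crossings; for such an $f$ every divisorial lc place lying over a component of $W$ is a component of $\Sigma$. The snc pair $(Y,\Sigma)$ is tautologically a Du Bois pair with $\underline{\Omega}^0_{Y,\Sigma}=\sO_Y(-\Sigma)$, and contravariant functoriality of Du Bois complexes gives a natural factorization $\mathcal{I}_W\to\underline{\Omega}^0_{X,W}\to Rf_*\sO_Y(-\Sigma)$ whose composite identifies $\mathcal{I}_W$ with $\mathcal{H}^0\bigl(Rf_*\sO_Y(-\Sigma)\bigr)=f_*\sO_Y(-\Sigma)$. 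Thus everything reduces to producing a retraction $Rf_*\sO_Y(-\Sigma)\to\mathcal{I}_W$ of this inclusion of the lowest cohomology sheaf.

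This retraction is the technical heart. Because $\Sigma$ contains the divisorial lc places and $(X,\Delta)$ is log canonical, one can arrange that $K_Y+\Sigma$ is $\mathbb{Q}$-linearly equivalent over $X$ to $f^*(K_X+\Delta)$ plus an effective divisor whose coefficients lie in the range where the Koll\'ar--Ambro--Fujino torsion-freeness and vanishing theorems — the log canonical (coefficient-one) extensions of Koll\'ar's results — apply to $Rf_*\bigl(\omega_Y(\Sigma)\bigr)$; in particular that complex decomposes as $\bigoplus_i R^if_*\omega_Y(\Sigma)[-i]$ with every summand torsion-free. Dualising via Grothendieck duality, $\mathbf{R}\mathcal{H}om\bigl(Rf_*\sO_Y(-\Sigma),\omega_X^\bullet\bigr)\simeq Rf_*\bigl(\omega_Y(\Sigma)\bigr)[\dim X]$ inherits the decomposition and torsion-freeness, so the dual of $\mathcal{I}_W\to Rf_*\sO_Y(-\Sigma)$ is a split surjection onto $\mathbf{R}\mathcal{H}om(\mathcal{I}_W,\omega_X^\bullet)$: torsion-freeness rules out the higher summands obstructing a splitting and the trace map supplies the section. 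Dualising back gives the required retraction, and composing with $\underline{\Omega}^0_{X,W}\to Rf_*\sO_Y(-\Sigma)$ yields the left inverse. At several points — in the functoriality statement and in matching the triangle relating $\underline{\Omega}^0$ of $X$, $W$, $\Sigma$ and the $f$-images of the strata of $\Sigma$ — one invokes that those images are unions of lc centres of lower-dimensional lc pairs, hence DB by induction.

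The hard part is the penultimate paragraph: squeezing the splitting and decomposition out of Koll\'ar-type theorems in the genuinely log canonical regime, where divisors occur with coefficient exactly $1$ and the classical rational-singularities arguments collapse, and then upgrading the sheaf-level consequences into an honest derived-category splitting. A secondary but genuine nuisance is that $W$ may be a union of lc centres of arbitrary codimension, so $f^{-1}(W)$ is not a priori a divisor: the ``adapted'' log resolution, the divisor $\Sigma$, and the identification $f_*\sO_Y(-\Sigma)=\mathcal{I}_W$ must be set up with care, and one must also cover the case where $W$ is a proper sub-union of the lc centres rather than the whole non-klt locus, which forces the induction to be threaded through the stratification of $X$ by lc centres rather than applied in one step. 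Once the vanishing/decomposition package is in place, the reduction to Du Bois pairs, the splitting criterion, and the homological bookkeeping are formal.
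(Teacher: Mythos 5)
Your plan stands or falls on the assertion that $\myR f_*\bigl(\omega_Y(\Sigma)\bigr)$ splits as $\bigoplus_i \myR^if_*\bigl(\omega_Y(\Sigma)\bigr)[-i]$ with $\Sigma$ a \emph{reduced} (coefficient-one) snc divisor, and that this follows from the Koll\'ar--Ambro--Fujino torsion-freeness and vanishing theorems. It does not: those theorems give control of the associated primes of the individual sheaves $\myR^if_*\omega_Y(\Sigma)$, but not the direct-sum decomposition of the complex. In the coefficient-one regime these sheaves come from variations of \emph{mixed} Hodge structure, which do not form a semisimple category, and no analogue of Koll\'ar's splitting theorem was available; the paper is structured precisely to avoid this statement. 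Its splitting result \eqref{splitting.thm} never touches $\omega(\Sigma)$: it writes $\pi^*(K_Y+\Delta)\sim_{\Q} K_{\wt Y}+E+\wt\Delta-B$ with $\rdown{\wt\Delta}=0$ and applies the decomposition only to $\sO_{\wt Y}(B-E)\sim_{\Q,\wt f}K_{\wt Y}+\wt\Delta$, i.e.\ in the klt range where the cyclic-cover hypothesis $m>\max\{a_i\}$ of \eqref{thm:higher-direct-images-split} reduces everything to Koll\'ar's theorems for $\omega$; the retraction is then the projection onto the zeroth summand together with the elementary identity $\wt f_*\sO_{\wt Y}(-F)=\wt f_*\sO_{\wt Y}(B-E)=\sI_W$. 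Your alternative derivation of the retraction (``torsion-freeness rules out the higher summands \dots the trace map supplies the section'') is not an argument even granting the decomposition: producing the section after dualizing is exactly the content to be proved, not a formality. Note also that your intermediate claim that $K_Y+\Sigma$ is $f^*(K_X+\Delta)$ plus an effective divisor is false as stated (exceptional lc places not lying over $W$, and $f^{-1}_*\Delta$, enter with negative coefficients), so even the setup for the would-be vanishing package is off.

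The second gap is the reduction for a general union $W$ of lc centers. Kawamata subadjunction applies to minimal lc centers and equips their \emph{normalizations} with klt structures; it neither gives the inductive structure on an arbitrary reducible, non-normal union of centers nor locates the lc centers inside intersections of images, which is what the induction actually needs. The paper has to prove a new connectedness-type result \eqref{lc.cent.thm} (via dlt models \eqref{thm:dlt-models} and running MMP), build the qlc stratification, and then combine seminormality/conductor arguments with finite covers split by the trace map \eqref{cor:finite-cover-is-ok} in a double induction on $\dim X$ and $\dim T$ to make the criterion \eqref{thm:db-criterion} applicable to the non-normal $T=\ol W$. Moreover, the splitting of \eqref{splitting.thm} is only proved when $W$ is the image of the \emph{entire} non-klt locus: for a proper sub-union your key identification $f_*\sO_Y(-\Sigma)=\sI_W$ with the pushforward of the relevant twist breaks down (functions vanishing on $W$ need not vanish on the other centers), which is exactly why the paper threads the argument through the stratification rather than applying the splitting once. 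You flag both issues as ``hard parts,'' but as written neither is supplied by the tools you invoke, so the proposal has genuine gaps at both the Hodge-theoretic core and the birational/combinatorial core; only the derived-category bookkeeping matches the paper.
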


This settles the above mentioned conjecture \cite[1.13]{K+92}.  For earlier results
related to this conjecture see \cite[\S 12]{Kollar95s},
\cite{Kovacs99,Kovacs00c,Ishii85,Ishii87a,Ishii87b,MR2339829,KSS08,Schwede08}.

Actually, we prove a more general statement, which is better suited to working with
log canonical centers and allows for more general applications, but it might seem a
little technical for the first reading:

\begin{thm}\label{thm:main}
  Let $f:{{Y}}\to {{X}}$ be a proper \uj{surjective} morphism with connected fibers
  between normal varieties. Assume that there exists an effective $\bQ$-divisor on
  $Y$ such that $({{Y}},\Delta)$ is lc and $K_{{Y}}+\Delta\sim_{\q,f} 0$.  Then $X$
  is DB.

  More generally, let $W\subset Y$ be a reduced, closed subscheme that is a union of
  log canonical centers of $({{Y}},\Delta)$.  Then $f(W)\subset X$ is DB.
\end{thm}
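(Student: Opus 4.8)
The plan is to reduce everything to a statement about the Du Bois complex and then use the theory of hyperresolutions together with a vanishing argument coming from the log canonical hypothesis. Recall that $X$ is DB iff the natural map $\sO_X \to \underline{\Omega}_X^0$ is a quasi-isomorphism, and more generally a closed subscheme $Z\subset X$ is DB iff $\sO_Z\to\underline{\Omega}_Z^0$ is a quasi-isomorphism. So the whole theorem is really about showing that a certain map of complexes is an isomorphism, and since this is a local question on $X$ we may assume $X$ affine (or replace $X$ by an appropriate compactification when we need properness for a vanishing theorem). The first reduction I would make is to the ``more general'' statement only: taking $W=Y$ recovers the first assertion, so it suffices to prove that $f(W)$ is DB whenever $W$ is a reduced union of log canonical centers of $(Y,\Delta)$.

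Next I would set up the key input: because $(Y,\Delta)$ is lc and $K_Y+\Delta\sim_{\q,f}0$, one has good control over the higher direct images $R^if_*\sO_Y$ and, more importantly, over $R^if_*\omega_Y$ (or the relative dualizing complex) via Kollár-type torsion-freeness and vanishing theorems, and over the pushforwards of ideal sheaves of unions of lc centers via the Ambro--Fujino--Kollár theory of non-lc / lc centers. The crucial technical statement I expect to need is a splitting: there should be a natural map from $\underline{\Omega}_X^0$ (resp. $\underline{\Omega}_{f(W)}^0$) to a complex built out of $Rf_*$ of similar Du Bois-type complexes on $Y$ and on the lc centers, and one shows $\sO_{f(W)}$ is a direct summand of $\underline{\Omega}_{f(W)}^0$ in the derived category by constructing a left inverse. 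Concretely: resolve $f$ by a log resolution $g:\tilde Y\to Y$ adapted to $W$ and $\Delta$, push forward, and use that $R(f\circ g)_*\sO_{\tilde Y}\to \sO_X$ (or its analogue for $W$) admits a splitting because of the connectedness of the fibers and the fact that $K_Y+\Delta$ is $\q$-trivial over $X$, which forces the relevant $R^i$ to be as small as possible.

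Then I would run the following chain of implications. Step 1: reduce to the case where $\Delta$ and $W$ are ``snc away from a small set'' by a log resolution, tracking how lc centers of $(Y,\Delta)$ correspond to strata upstairs. Step 2: on the resolution, the analogue of the statement is classical — snc pairs are DB and unions of strata of snc divisors are DB — so the content is entirely in descending this along $f\circ g$. Step 3: use a dualizing-complex / Grauert--Riemenschneider-type vanishing, valid precisely because of the lc and $\q$-triviality hypotheses, to show that the cone of $\sO_{f(W)}\to \underline{\Omega}_{f(W)}^0$ has no cohomology; equivalently, exhibit the splitting in the derived category and then argue that a split injection which is an isomorphism on a dense open (namely the DB locus, which contains the smooth/snc locus) must be an isomorphism everywhere, using that $\underline{\Omega}^0$ commutes with the relevant restrictions. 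Step 4: an induction on $\dim X$ (or on the number of lc centers) to handle the boundary, peeling off one lc center at a time via the residue/adjunction exact sequences for $\underline{\Omega}^0$ of a union versus its pieces, each piece again being of the form $f(W')$ for a smaller configuration.

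The main obstacle, I expect, is Step 3 — precisely the derived-category splitting of $\sO_{f(W)}\to\underline{\Omega}_{f(W)}^0$. Getting a map in one direction (from $\underline{\Omega}^0$ down, or $\sO$ up) is formal; producing the left inverse is where the log canonical hypothesis has to be used in an essential, nonformal way, presumably through a careful application of Kollár's torsion-freeness and vanishing for $\omega_Y$ (and for ideal sheaves of lc centers), combined with duality to transfer statements about $R^if_*\omega$ into statements about the $\mcH^i$ of $\underline{\Omega}^0_X$. The bookkeeping of how lc centers behave under the log resolution $g$ — in particular ensuring that $W$ pulls back to an actual union of snc strata and not something worse, and that no new lc centers are created — is the other delicate point, and it is likely handled by perturbing $\Delta$ and choosing $g$ sufficiently carefully (a standard but fiddly MMP manoeuvre).
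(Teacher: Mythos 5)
You have correctly identified one of the two pillars of the argument: the derived-category splitting criterion (the paper's Theorem~\ref{thm:db-criterion}, fed by Theorem~\ref{splitting.thm}, which in turn rests on Koll\'ar's torsion-freeness and the decomposition $\myR f_*(\omega\otimes\sL)\simeq\sum_i\myR^if_*(\omega\otimes\sL)[-i]$ applied after a cyclic-cover reduction). Your instinct that the left inverse is the nonformal heart of the matter, and that it comes from $K_Y+\Delta\sim_{\bQ,f}0$ plus connected fibers via $\wt f_*\sO_{\wt Y}(-F)=\wt f_*\sO_{\wt Y}(B-E)$, matches the paper.

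The genuine gap is the second pillar, which your sketch does not supply: a structure theorem for the \emph{images} of lc centers on $X$. Your Step 4 proposes to induct by ``peeling off one lc center at a time via residue/adjunction sequences,'' but adjunction lives on $Y$, while the object you must prove DB is $f(W)\subset X$, which is in general non-normal and whose conductor, normalization, and intersections with images of other lc centers all have to be identified as (closures of) images of lc centers before any induction can close. This is exactly the content of the paper's Theorem~\ref{lc.cent.thm}: for lc centers $Z_1,Z_2$, every irreducible component of $f(Z_1)\cap f(Z_2)$ is dominated by an lc center \emph{contained in} $Z_1$. Its proof is not formal Hodge theory or vanishing; it requires Hacon's dlt-model theorem \eqref{thm:dlt-models} and running an MMP to a Fano contraction (Proposition~\ref{2comp->plt.prop}), together with the Ambro--Fujino qlc machinery to get normality of strata and seminormality of their closures. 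Without this, the double induction (on $\dim X$ and on the dimension of the union of strata) that the paper runs in Theorem~\ref{thm:main-proof} — normalize $\overline{f(W)}$ via a finite qlc-stratified cover, show the conductor sits inside a smaller union of strata, and apply the splitting criterion to the normalization map — cannot even be set up. A secondary error: in your Step 3 you assert that a split injection $\sO\to\Om^0$ which is an isomorphism on a dense open must be an isomorphism everywhere; that is false for maps of complexes and is not needed — the splitting alone gives DB by \cite[Thm.~2.3]{Kovacs99}.
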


There are three, more technical results that should be of independent interest. The
first is a quite flexible criterion for Du Bois singularities.

\begin{thm}\label{thm:db-criterion}
  Let ${f}: Y\to X$ be a proper 
  morphism between reduced schemes of finite type over $\bC$.
  Let $W\subseteq X$ and $F\leteq {f}^{-1}(W)\subset Y$ be closed reduced subschemes
  with ideal sheaves $\sI_{W\subseteq X}$ and $\sI_{F\subseteq Y}$.  Assume that the
  natural map $\varrho$
  $$
  \xymatrix{ \sI_{W\subseteq X} \ar[r]_-\varrho & \myR{f}_*\sI_{F\subseteq Y}
    \ar@{-->}@/_1.5pc/[l]_{\varrho'} }
  $$
  admits a left inverse $\varrho'$, that is,
  $\varrho'\circ\varrho=\id_{\sI_{W\subseteq X}}$. Then if $Y,F$, and $W$ all have DB
  singularities, then so does $X$.
\end{thm}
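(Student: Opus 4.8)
The plan is to work with the Du Bois complexes $\underline\Omega_X^0$, $\underline\Omega_Y^0$, $\underline\Omega_W^0$, $\underline\Omega_F^0$ and exploit the functoriality of Deligne's construction together with the standard characterization that a reduced scheme $Z$ is DB if and only if the natural map $\sO_Z\to\underline\Omega_Z^0$ is a quasi-isomorphism. The starting point is the commutative diagram of exact triangles relating a scheme, a closed subscheme, and the ideal sheaf: on $X$ we have the triangle $\underline\Omega_{X,W}^0\to\underline\Omega_X^0\to\underline\Omega_W^0\xrightarrow{+1}$, where $\underline\Omega_{X,W}^0$ is the "ideal-sheaf Du Bois complex" fitting in $\underline\Omega_{X,W}^0\to\sI_{W\subseteq X}$ (a quasi-isomorphism precisely when the pair is DB), and similarly on $Y$ with $W$ replaced by $F$. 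Functoriality of $\underline\Omega^0$ for the morphism $f$ gives a map of triangles from the $X$-triangle to $\myR f_*$ of the $Y$-triangle, and in particular a map $\underline\Omega_{X,W}^0\to\myR f_*\underline\Omega_{Y,F}^0$ compatible with $\varrho$ on the level of ideal sheaves.

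First I would record the hypotheses: since $Y$, $F$, $W$ are all DB, the natural maps $\sO_Y\to\underline\Omega_Y^0$, $\sO_F\to\underline\Omega_F^0$, $\sO_W\to\underline\Omega_W^0$ are quasi-isomorphisms, hence so is $\sI_{F\subseteq Y}\to\underline\Omega_{Y,F}^0$ (five lemma on the two triangles over $Y$). Therefore $\myR f_*\underline\Omega_{Y,F}^0\simeq \myR f_*\sI_{F\subseteq Y}$. Consider the composition
$$
\underline\Omega_{X,W}^0\longrightarrow \myR f_*\underline\Omega_{Y,F}^0\simeq \myR f_* \sI_{F\subseteq Y}\xrightarrow{\ \varrho'\ }\sI_{W\subseteq X}.
$$
I claim this composition agrees with the natural map $\psi\colon\underline\Omega_{X,W}^0\to\sI_{W\subseteq X}$. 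Indeed, the first map lifts $\varrho$ through the natural maps $\underline\Omega_{X,W}^0\to\sI_{W\subseteq X}$ and $\underline\Omega_{Y,F}^0\to\sI_{F\subseteq Y}$ by functoriality, so composing with $\varrho'$ and using $\varrho'\circ\varrho=\id$ yields $\psi$. Thus $\psi\colon\underline\Omega_{X,W}^0\to\sI_{W\subseteq X}$ admits a factorization through $\myR f_*\underline\Omega_{Y,F}^0$; but chasing the other way, $\psi$ is itself a split injection onto a direct summand in the derived category — more precisely, the argument shows $\id_{\sI_{W\subseteq X}}$ factors as $\sI_{W\subseteq X}\to\myR f_*\sI_{F\subseteq Y}\to\sI_{W\subseteq X}$ through a complex quasi-isomorphic to $\underline\Omega_{X,W}^0$ only if we already know DB-ness, so instead I argue as follows: the map $\sO_X\to\underline\Omega_X^0$ is split injective in $D^b_{\mathrm{coh}}(X)$ once we produce a left inverse, and such a left inverse is obtained by combining the splitting on ideal sheaves with the known quasi-isomorphisms $\sO_W\simeq\underline\Omega_W^0$, $\sO_F\simeq\underline\Omega_F^0$, and then applying the standard fact (used throughout the literature on DB singularities, e.g. \cite{Kovacs99,KSS08}) that a reduced scheme $X$ for which $\sO_X\to\underline\Omega_X^0$ admits a left inverse in $D^b_{\mathrm{coh}}(X)$ is automatically DB. Concretely: from the morphism of triangles one gets that $\sO_X\to\underline\Omega_X^0$ is a retract of $\myR f_*\sO_Y\to\myR f_*\underline\Omega_Y^0$ in the appropriate relative sense, via $\varrho'$; since $\sO_Y\simeq\underline\Omega_Y^0$, the target arrow is an isomorphism, and a retract of an isomorphism is an isomorphism. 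Taking cohomology and the long exact sequences on the $W$- and $F$-triangles, together with $\sO_W\simeq\underline\Omega_W^0$ and $\sO_F\simeq\underline\Omega_F^0$, upgrades the retract statement into the assertion that $\sO_X\to\underline\Omega_X^0$ is a quasi-isomorphism, i.e. $X$ is DB.

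I expect the main obstacle to be the bookkeeping in the previous paragraph: one must set up the full $3\times 2$ diagram of objects $\sO$, $\underline\Omega^0$, and the "$W$-part'' over both $X$ and $Y$, check that all squares commute (this uses naturality of Deligne's filtered complex, which is where the hypothesis that everything is of finite type over $\bC$ is used), and verify that the left inverse $\varrho'$ of $\varrho$ really does propagate to a left inverse of $\sO_X\to\underline\Omega_X^0$ — the subtlety being that $\varrho'$ is only assumed to split $\varrho$ and need not be compatible with any further structure, so one has to be careful that the induced map on the $\underline\Omega^0$-level, rather than merely on ideal sheaves, is what splits. The clean way to organize this is to phrase the whole argument in terms of the octahedral axiom applied to $\sI_{F\subseteq Y}\to\sO_Y\to\sO_F$ and its image under $\myR f_*$, reducing to the single statement that a summand (in the derived category) of a DB scheme's structure-sheaf-comparison map is again an isomorphism, which is exactly the splitting criterion already implicit in earlier work and which I would state as a lemma before the main proof.
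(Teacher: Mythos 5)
Your strategy coincides with the paper's: use the octahedral axiom to combine the triangles $\sI_{W\subseteq X}\to\sO_X\to\sO_W$ and $\myR f_*\sI_{F\subseteq Y}\to\myR f_*\sO_Y\to\myR f_*\sO_F$ with their $\Om^0$-analogues, transfer the splitting $\varrho'$ into a left inverse of $\lambda\colon\sO_X\to\Om^0_X$, and conclude by the splitting criterion of \cite[Thm.~2.3]{Kovacs99}. But the step you defer as ``bookkeeping'' --- verifying that $\varrho'$, an arbitrary splitting of $\varrho$ with no compatibility assumed, actually propagates to a left inverse at the $\Om^0$-level --- is the substantive point of the proof, and your sketch does not supply it. In the paper's argument, Lemma \eqref{lem:derived-split} produces objects $\sfQ$ and $\sfD$ completing $\myR f_*\sO_Y\oplus\sO_W\to\myR f_*\sO_F$ and $\myR f_*\Om^0_Y\oplus\Om^0_W\to\myR f_*\Om^0_F$ to exact triangles, a map $\vartheta\colon\sO_X\to\sfQ$ that inherits a left inverse $\vartheta'$ from $\varrho'$, a map $\delta\colon\Om^0_X\to\sfD$, and a map $\xi\colon\sfQ\to\sfD$ which is an isomorphism because $Y$, $F$, $W$ are DB. The candidate left inverse is $\vartheta'\circ\xi^{-1}\circ\delta$, and it works only if $\xi\circ\vartheta=\delta\circ\lambda$. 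Since cones are not functorial, $\vartheta$, $\delta$, $\xi$ are each determined only up to non-canonical choices, so this square does not commute for free; the paper proves it (Claim \eqref{claim:diagram-commutes}) by first checking commutativity on $h^0$ --- where one uses that $h^0(\sfD)\to h^0(\myR f_*\Om^0_Y\oplus\Om^0_W)$ is injective because $h^{-1}(\myR f_*\Om^0_F)=0$ --- and then invoking \eqref{subclaim:h-nought}: a morphism from a complex concentrated in degree $0$ to a complex with no negative cohomology is determined by its $h^0$. Without an argument of this kind your proof is incomplete exactly where you predicted it would be delicate.

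Two of your intermediate assertions are also incorrect as literally stated. There is no natural map $\Om^0_{X,W}\to\sI_{W\subseteq X}$; the natural arrow points the other way, and requiring it to be a quasi-isomorphism is (a relative form of) the conclusion, which is why that branch of your argument is circular, as you yourself observe. And $\sO_X\to\Om^0_X$ is not a retract of $\myR f_*\sO_Y\to\myR f_*\Om^0_Y$: the hypothesis splits only the map of ideal sheaves, not $\sO_X\to\myR f_*\sO_Y$ (recall that $f$ is not assumed surjective or birational, and even when it is, no splitting of $\sO_X\to\myR f_*\sO_Y$ is given). The object against which $\sO_X$ must be compared has to glue $\myR f_*\sO_Y$ and $\sO_W$ along $\myR f_*\sO_F$, which is precisely what the paper's $\sfQ$ accomplishes.
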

\begin{subrem}
  Notice that we do not require ${f}$ to be birational. On the other hand the
  assumptions of the theorem and \cite[Thm~1]{Kovacs00b} imply that if $Y\setminus F$
  has rational singularities, e.g., if $Y$ is smooth, then $X\setminus W$ has
  rational singularities as well.
\end{subrem}

The second is a variant of the connectedness theorem \cite[17.4]{K+92} for not
necessarily birational morphisms.

\begin{thm}\label{lc.cent.thm}
  Let $f:{{Y}}\to {{X}}$ be a proper morphism with connected fibers between normal
  varieties. Assume that $({{Y}},\Delta)$ is lc and $K_{{Y}}+\Delta\sim_{\q,f} 0$.
  Let $Z_1, Z_2\subset Y$ be lc centers of $(Y,\Delta)$.
  Then, for every irreducible component $T\subset f(Z_1)\cap f(Z_2)$ there is an lc
  center $Z_T$ of $(Y,\Delta)$ such that $Z_T\subset Z_1$ and $f(Z_T)=T$.

  More precisely, let $g:Z_1\cap f^{-1}(T)\to S$ and $\pi:S\to T$ be the Stein
  factorization of $Z_1\cap f^{-1}(T)\to T$.  Then, for every irreducible component
  $S_i\subset S$ there is an lc center $Z_{S_i}$ of $(Y,\Delta)$ such that
  $g(Z_{S_i})=S_i$.
\end{thm}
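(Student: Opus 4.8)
The plan is to reduce the statement to the classical connectedness theorem \cite[17.4]{K+92} applied to the restriction of $f$ over $T$, after passing to a log resolution. First I would fix an irreducible component $T\subset f(Z_1)\cap f(Z_2)$. Since the claim is local on $T$ (and on $X$), I may shrink $X$ to a neighborhood of the generic point of $T$; after such shrinking I can assume $T$ is normal (even that $X$ is a curve or a local ring at the generic point of $T$, by cutting with general hyperplanes, which preserves being lc and the numerical triviality $K_Y+\Delta\sim_{\q,f}0$). The key object is $Z_1\cap f^{-1}(T)$ together with its Stein factorization $g:Z_1\cap f^{-1}(T)\to S$, $\pi:S\to T$; the second, more precise, assertion clearly implies the first, so it suffices to produce, for each component $S_i$ of $S$, an lc center $Z_{S_i}$ of $(Y,\Delta)$ with $g(Z_{S_i})=S_i$.

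The main step is to analyze lc centers via a log resolution. Let $\mu:\wt Y\to Y$ be a log resolution of $(Y,\Delta)$ and write $K_{\wt Y}+\wt\Delta=\mu^*(K_Y+\Delta)+E$ with $\wt\Delta$ the strict transform plus the reduced $\mu$-exceptional divisors appearing with coefficient $1$ (equivalently, work with $\sum a_i<0$ discrepancy divisors on the other side). The lc centers of $(Y,\Delta)$ are exactly the images of the strata of the snc divisor $\lfloor\wt\Delta\rfloor^{=1}$. Over the generic point of $T$, the hypothesis $K_Y+\Delta\sim_{\q,f}0$ gives $K_{\wt Y}+\wt\Delta\sim_{\q}$ (pullback from $T$) $+\,E$ with $E$ effective and exceptional, so the divisor $\wt\Delta^{=1}+\{\text{non-}T\text{-part}\}$ is "relatively numerically trivial" in the sense needed to run the Kollár–Shokurov connectedness argument relatively over $T$: the locus where $(\wt Y,\wt\Delta)$ fails to be klt is connected in every fiber over $T$. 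This is precisely \cite[17.4]{K+92} applied to $\wt Y\to T$.

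Now I would combine connectedness with the Stein factorization. The subscheme $Z_1$ corresponds to some stratum $W_1$ of $\lfloor\wt\Delta\rfloor$; restricting over $T$ and taking the Stein factorization, each component $S_i$ of $S$ is dominated by a component of $W_1\cap\mu^{-1}f^{-1}(T)$, hence by a stratum of the boundary lying inside $W_1$. The connectedness theorem, applied fiberwise over the generic point of $S_i$ (using that $\pi:S\to T$ is finite, so $S_i\to T$ is generically finite and we may base change to the generic point of $S_i$), forces the existence of an lc center of $(Y,\Delta)$ whose preimage meets the generic fiber of $g$ over $S_i$ in a set that surjects onto that fiber — spreading out and taking closures yields an lc center $Z_{S_i}\subset Z_1$ with $g(Z_{S_i})=S_i$. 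Taking $Z_T=Z_{S_i}$ for any $i$ gives the first assertion.

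\emph{Main obstacle.} The delicate point is not the connectedness input itself but the bookkeeping that translates it into a statement about lc \emph{centers} of $(Y,\Delta)$ rather than strata of some resolution: one must ensure that the stratum produced by connectedness actually lies inside $Z_1$ (not merely inside $f^{-1}(Z_1)$) and maps \emph{onto} $S_i$, which requires choosing the log resolution so that $Z_1$ and $f^{-1}(T)$, and the Stein factorization data, are all "monomialized" simultaneously, and then carefully checking that the relevant stratum's image is $S_i$ and not a proper subvariety. A secondary technical nuisance is that $f$ is only assumed proper with connected fibers (not birational), so $\myR^0 f_*\sO_Y=\sO_X$-type normality arguments have to be invoked to guarantee that the Stein factorization $S\to T$ is finite and that intersecting with general hyperplanes through the generic point of $T$ preserves all hypotheses; this is where one leans on $X,Y$ normal and $T$ an honest component of the (possibly reducible) intersection $f(Z_1)\cap f(Z_2)$.
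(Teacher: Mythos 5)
Your proposal has a genuine gap at precisely the point you flag as the ``main obstacle,'' and no mechanism is offered to close it. Connectedness of the non-klt locus in the fibers over $T$ (whether via \cite[17.4]{K+92} or otherwise) only tells you that \emph{some} lc center meets the relevant fiber and dominates $S_i$; it does not tell you that this center is contained in $Z_1$. This is exactly the content that makes the theorem new: as the paper itself remarks, the previously known connectedness-based results only yield that every component of $f(Z_1)\cap f(Z_2)$ is \emph{dominated by} an lc center ``whose precise location was not known.'' Choosing a log resolution that simultaneously monomializes $Z_1$, $f^{-1}(T)$, and the Stein factorization data does not help: the stratum of $\rdown{\wt\Delta}$ that the connectedness argument produces over the generic point of $S_i$ need not lie in the strict transform of (a divisor dominating) $Z_1$, and ``spreading out and taking closures'' cannot force it there. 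A second, smaller issue: \cite[17.4]{K+92} is not directly applicable in the relatively trivial case $K_Y+\Delta\sim_{\bQ,f}0$ with $f$ non-birational; the correct input is the paper's Proposition~\ref{2comp->plt.prop}, whose proof already requires running the MMP (dlt models via \cite{BCHMc06}, contracting to a Fano fibration, and analyzing the resulting $\bP^1$-bundle). The paper explicitly poses finding an MMP-free proof as an open problem, so a proof resting only on a log resolution plus classical connectedness should be viewed with suspicion.

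The missing idea is the paper's chain argument. After passing to a $\bQ$-factorial dlt model \eqref{thm:dlt-models}, reducing to the case where $Z_1,Z_2\subset\rdown{\Delta}$ are divisors, and localizing so that $f(Z_1)\cap f(Z_2)$ is a single closed point $x$, one uses the connectedness of $f^{-1}(x)\cap\rdown{\Delta}$ to produce a chain of boundary divisors $V_1=Z_2,V_2,\dots,V_m=Z_1$ with $f^{-1}(x)\cap V_i\cap V_{i+1}\neq\emptyset$. One then proves by induction on $i$ that $W_i=V_i\cap\bigcap_{j<i}f^{-1}(f(V_j))$ contains an lc center: the inductive step restricts to $\bigl(V_i,\diff_{V_i}(\Delta-V_i)\bigr)$ via adjunction, invokes the theorem itself in lower dimension, and returns to $(Y,\Delta)$ by inversion of adjunction. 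It is this transport along the chain, not connectedness alone, that places the final lc center inside $Z_1$ and makes it surject onto $T$ (respectively onto each $S_i$). Without supplying an argument of this kind, your reduction proves at most the weaker, previously known statement.
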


The third is the flatness of the cohomology sheaves of the relative dualizing complex
of a DB morphism:

\begin{thm}\label{thm:flat}
  Let $\phi:X\to B$ be a flat projective morphism such that all fibers are Du~Bois.
  Then the cohomology sheaves $h^{i}(\omega_\phi^\bdot)$ are flat over $B$, where
  $\omega_\phi^\bdot$ denotes the {relative dualizing complex} of $\phi$.
\end{thm}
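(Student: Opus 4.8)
The plan is to reduce the flatness statement to a local-cohomology dimension count, combined with the semicontinuity-plus-constancy mechanism that underlies the classical criterion ``constant Euler characteristic and semicontinuous fiber dimensions force flatness''. Concretely, since $\phi$ is projective, work locally on $B$, which we may take to be affine, and even (by passing to local rings and completing) we may assume $B=\Spec A$ with $A$ local. The relative dualizing complex $\omega_\phi^\bdot$ is, up to shift, $\phi^!\sO_B$; its formation commutes with base change in the derived sense, so for every point $b\in B$ one has $L\iota_b^*\omega_\phi^\bdot\cong\omega_{\phi_b}^\bdot$ where $\iota_b\colon X_b\hookrightarrow X$ is the inclusion of the fiber. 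The cohomology sheaves $h^i(\omega_\phi^\bdot)$ are coherent, supported in a bounded range of degrees $[-n,0]$ where $n=\dim X/B$, and the issue is precisely the failure of $L\iota_b^*$ to be $\iota_b^*$ on these cohomology sheaves: there is a spectral sequence (or the usual hyper-Tor exact sequences) relating $h^i(\omega_{\phi_b}^\bdot)$ to $\iota_b^*h^i(\omega_\phi^\bdot)$ and $\mathrm{Tor}_1^A(h^{i+1}(\omega_\phi^\bdot),k(b))$, and flatness of all the $h^i(\omega_\phi^\bdot)$ is equivalent to the vanishing of all these Tor terms, i.e.\ to the degeneration of this base-change spectral sequence for every $b$.

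The key input is the hypothesis that every fiber $X_b$ is Du~Bois. By duality, the Du~Bois condition on $X_b$ controls the dual complex: if $X_b$ has dimension $n_b$ then being Du~Bois implies (via the description of the Du~Bois complex and Grothendieck--Serre duality, as in \cite{KSS08} and the cohomological vanishing statements used there) a bound on the range in which $h^i(\omega_{X_b}^\bdot)$ is nonzero and, more importantly, a statement that $\sO_{X_b}\to\underline\Omega_{X_b}^0$ being a quasi-isomorphism forces the higher cohomology sheaves of $\omega_{X_b}^\bdot$ to have small support. The precise mechanism I would use: for a Du~Bois scheme the natural map $h^{-j}(\omega^\bdot)\to \sExt^{j}(\underline\Omega^0,\omega^\bdot)$ or rather the comparison with local cohomology shows $\depth$-type estimates that make the function $b\mapsto \dim_{k(b)} h^i(\omega_{\phi_b}^\bdot)\otimes k(x)$ upper semicontinuous with the ``expected'' fiber behavior. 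Then the standard argument runs: the alternating sum $\sum_i (-1)^i [\text{class of } h^i(\omega_{\phi_b}^\bdot)]$ is locally constant in $b$ (Euler characteristics in a flat family), while each individual term is upper semicontinuous; Du~Boisness is exactly the extra structure that upgrades ``no generic jumping'' to ``no jumping at all'', because a jump at a special point would, via the base-change spectral sequence, produce a nonzero $\mathrm{Tor}$ and hence, on the fiber, a cohomology sheaf of the dualizing complex that is too large to be consistent with the Du~Bois vanishing on $X_b$.

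More concretely I would argue by descending induction on $i$ (starting from $i=0$, the top cohomology $h^0(\omega_\phi^\bdot)=\omega_\phi$, which is flat because it is the relative dualizing sheaf of a flat projective morphism and is torsion-free/pure along fibers). Suppose $h^j(\omega_\phi^\bdot)$ is flat over $B$ for all $j>i$. The hyper-Tor spectral sequence then collapses enough to give an exact sequence
$$
0\to h^i(\omega_\phi^\bdot)\otimes_A k(b)\to h^i(\omega_{\phi_b}^\bdot)\to \mathrm{Tor}_1^A\bigl(h^{i+1}(\omega_\phi^\bdot),k(b)\bigr)\to\cdots,
$$
and the inductive hypothesis kills the $\mathrm{Tor}_1$ term, so $h^i(\omega_\phi^\bdot)\otimes k(b)\cong h^i(\omega_{\phi_b}^\bdot)$. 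Now I would invoke the Du~Bois hypothesis: on $X_b$ the dualizing complex satisfies the cohomological vanishing coming from \cite{KSS08, Kovacs99} (splitting of the Du~Bois complex dualizes to a splitting injection $h^i(\omega_{X_b}^\bdot)\hookrightarrow$ something computed from a resolution), which pins down $\dim_{k(b)} h^i(\omega_{\phi_b}^\bdot)$ to be the value predicted by cohomology-and-base-change for the resolution — hence a constant in a (dense open, then all of) $B$. Constancy of $\dim_{k(b)} h^i(\omega_\phi^\bdot)\otimes k(b)$ together with coherence then forces $h^i(\omega_\phi^\bdot)$ to be locally free, completing the induction.

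The main obstacle, and the step that needs genuine care rather than bookkeeping, is the middle one: extracting from ``$X_b$ is Du~Bois'' a base-change–compatible rigidity for $\dim_{k(b)} h^i(\omega_{\phi_b}^\bdot)$. The naive hope that Du~Boisness directly gives constancy is too strong; what one actually proves is that Du~Boisness makes the relevant obstruction groups vanish, i.e.\ it is equivalent (by duality applied fiberwise) to the statement that the natural maps from the cohomology sheaves of $\omega_{\phi_b}^\bdot$ to the analogous sheaves computed on a simultaneous log resolution inject, and these latter are automatically flat by generic flatness plus the previously-established flatness in higher degrees. So the real content is: (i) set up a log/Du~Bois resolution $\epz\colon\widetilde X\to X$ adapted to the whole family (one exists after shrinking $B$, and the cohomological output is handled one fiber at a time so no simultaneity in the strong sense is needed), (ii) dualize the defining quasi-isomorphism of the Du~Bois complex on each fiber to get the injectivity of $h^i(\omega_{\phi_b}^\bdot)\to R^{\,?}\epz_*\omega_{\widetilde X_b/\dots}$, and (iii) feed this into the Euler-characteristic/semicontinuity machine above. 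Steps (i) and (iii) are routine; step (ii) is where the hypothesis of the theorem is really used, and where one must be careful that the splitting/injectivity statement is compatible with restriction to fibers — this is exactly the kind of compatibility that the earlier criterion \eqref{thm:db-criterion} and the surjectivity statement (the first Corollary in the introduction) are designed to provide, so I would route the argument through those rather than re-deriving it.
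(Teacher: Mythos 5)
There is a genuine gap, and it sits exactly where you yourself locate ``the real content.'' Your reduction steps are fine as far as they go: the hyper-Tor spectral sequence argument does show that once $h^{j}(\omega_\phi^\bdot)$ is flat for all $j>i$, formation of $h^{i}$ commutes with passing to fibers, and a Hilbert-polynomial argument in the style of \cite[III.9.9]{Hartshorne77} would then give flatness of $h^{i}(\omega_\phi^\bdot)$ \emph{provided} you can show that the Hilbert polynomial of $h^{i}(\omega_{X_b}^\bdot)$ is independent of $b$. By relative duality (cf.\ \eqref{lem:D-duality} and \eqref{item:2}) that Hilbert polynomial is governed by $q\mapsto h^{i}(X_b,\sL_b^{-q})$ for $q\gg0$, so the whole theorem boils down to proving that the dimensions $h^{i}(X_b,\sL_b^{-q})$ do not jump. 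Your proposal never proves this. Semicontinuity plus constancy of Euler characteristics only controls the alternating sum; the injectivity of $h^{i}(\omega_{X_b}^\bdot)$ into sheaves computed from a resolution (which is a consequence, not a characterization, of Du~Bois, cf.\ \cite{KSS08}) gives an upper bound of the wrong kind and is in any case not established compatibly with the family; and ``generic flatness after shrinking $B$'' is useless here because the assertion is precisely about the special points of $B$. The first Corollary of the introduction, which you propose to route through, concerns $\sO_{X_b}$ and not the negative twists $\sL_b^{-q}$, so it does not apply as stated. The missing idea is the paper's key mechanism: take the cyclic cover $Y^m\to X$ associated to a general section of $\sL^m$ as in \cite[2.50]{KM98}; by \eqref{thm:cyclic-cover-of-DB-is-DB} its fibers are again DB, so the Du~Bois--Jarraud theorem \eqref{thm:coh-invariance} (whose proof is the Hodge-theoretic surjectivity $H^i(X^{\rm an},\bC)\to H^i(X,\sO_X)$ in families, \cite{MR0376678}) gives local freeness and base-change compatibility of $\myR^i(\phi\circ\sigma)_*\sO_{Y^m}$, hence of each eigensummand $\myR^i\phi_*\sL^{-j}$, $j>0$; dualizing over $B$ then makes $\phi_*\bigl(h^{-i}(\omega_\phi^\bdot)\otimes\sL^{q}\bigr)$ locally free for $q\gg0$, which yields flatness. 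Without this (or an equivalent substitute) your induction cannot close.

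Two smaller but real slips: the base case of your descending induction is wrong as stated, since $\omega_\phi=h^{-n}(\omega_\phi^\bdot)$ is the \emph{lowest} nonvanishing cohomology, not $h^{0}(\omega_\phi^\bdot)$, and flatness of the relative dualizing sheaf of a flat projective morphism is not automatic --- it is part of the conclusion of the theorem, not an input (the harmless base case is $h^{j}=0$ for $j>0$, as in \eqref{lem:D-duality}). Also note that the paper's induction (in \eqref{thm:coh-base-change}) is used only for the base-change statement, with flatness obtained first and globally in all degrees from the duality argument; trying to interleave the two as you do is not fatal in itself, but it puts all the weight on the fiberwise constancy statement that the proposal leaves unproved.
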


\begin{demo}{Definitions and Notation}\label{demo:defs-and-not}
  Let $K$ be an algebraically closed field of characteristic $0$.  Unless otherwise
  stated, all objects are assumed to be defined over $K$, all schemes are assumed to
  be of finite type over $K$ and a morphism means a morphism between schemes of
  finite type over $K$. 

  \ujj{%
    If $\phi:Y\to Z$ is a birational morphism, then $\Ex(\phi)$ will denote the
    \emph{exceptional set} of $\phi$.  } For a closed subscheme $W\subseteq X$, the
  ideal sheaf of $W$ is denoted by $\sI_{W\subseteq X}$ or if no confusion is likely,
  then simply by $\sI_W$.  For a point $x\in X$, $\kappa(x)$ denotes the residue
  field of $\sO_{X,x}$.

  A \emph{pair} $(X,\Delta)$ consists of a variety $X$ and an effective $\bQ$-divisor
  $\Delta$ on $X$.  If $(X,\Delta)$ is a pair, then $\Delta$ is called a
  \emph{boundary} if $\rdown{(1-\varepsilon)\Delta}=0$ for all $0<\varepsilon<1$,
  i.e., the coefficients of all irreducible components of $\Delta$ are in the
  interval $[0,1]$.  For the definition of \emph{klt, dlt}, and \emph{lc} pairs see
  \cite{KM98} \ujj{and for the definition of the \emph{different}, $\diff$ see
    \cite[16.5]{K+92}}.
  Let $(X, \Delta)$ be a pair and 
  $f^{\rm m}:X^{\rm m}\to X$ a \ujj{proper birational morphism such that $\Ex(f^{\rm
      m})$ is a} divisor $E=\sum a_iE_i$. Let $\Delta^{\rm m}\leteq (f^{\rm
    m})^{-1}_*\Delta + \sum_{a_i\leq -1}E_i$. Then $(X^{\rm m}, \Delta^{\rm m})$ is a
  \emph{minimal dlt model} of $(X,\Delta)$ if it is a dlt pair and the discrepancy of
  every $f^{\rm m}$-exceptional divisor is at most $-1$. Note that if $(X,\Delta)$ is
  lc with a minimal dlt model $(X^{\rm m}, \Delta^{\rm m})$, then $K_{X^{\rm
      m}}+\Delta^{\rm m} \sim_{\bQ} (f^{\rm m})^*(K_X+\Delta)$.

  For morphisms $\phi:X\to B$ and $\vartheta: T\to B$, the symbol $X_T$ will denote
  $X\times_B T$ and $\phi_T:X_T\to T$ the induced morphism.  In particular, for $b\in
  B$ we write $X_b = \phi^{-1}(b)$.
  Of course, by symmetry, we also have the notation $\vartheta_X:T_X\simeq X_T\to X$
  and if $\sF$ is an $\sO_X$-module, then $\sF_T$ will denote the $\sO_{X_T}$-module
  $\vartheta_X^*\sF$.

  For a morphism $\phi:X\to B$, the \emph{relative dualizing complex} of $\phi$ (if
  it exists) will be denoted by $\omega_\phi^\bdot$. \ujj{Recall that if $\phi$ is a
    projective morphism, then $\omega_\phi^\bdot=\phi^!\sO_B$.} In particular, for a
  (quasi-projective) scheme $X$, the \emph{dualizing complex} of $X$ will be denoted
  by $\omega_X^\bdot$.


  The symbol $\simeq$ will mean isomorphism in the appropriate category. In
  particular, between complexes considered as objects in a derived category it stands
  for a \emph{quasi-iso\-morph\-ism}.

  We will use the following notation: For a functor $\Phi$, $\myR\Phi$ denotes its
  derived functor on the (appropriate) derived category and $\myR^i\Phi:=
  h^i\circ\myR\Phi$ where $h^i(C^\bdot)$ is the cohomology of the complex $C^\bdot$
  at the $i^\text{th}$ term.  Similarly, $\bH^i_Z := h^i \circ \myR \Gamma_Z$ where
  $\Gamma_Z$ is the functor of cohomology with supports along a subscheme $Z$.
  Finally, $\sHom$ stands for the sheaf-Hom functor and $\sExt^i:=
  h^i\circ\myR\sHom$.
%
\end{demo}

\begin{demo-r}{DB singularities}\label{def:DB}
  Consider a complex algebraic variety $X$.  If $X$ is smooth and projective, its De
  Rham complex plays a fundamental role in understanding the geometry of $X$. When
  $X$ is singular, an analog of the De Rham complex, introduced by Du~Bois, plays a
  similar role.

  Let $X$ be a complex scheme of finite type. Based on Deligne's theory of mixed
  Hodge structures, Du~Bois defined a filtered complex of $\sO_X$-modules, denoted by
  $\Om_{X}^\bdot$, that agrees with the algebraic De Rham complex in a neighborhood
  of each smooth point and, like the De Rham complex on smooth varieties, its
  {analytization} provides a resolution of the sheaf of locally constant functions on
  $X$ \cite{DuBois81}.  \uj{Following H\'el\`ene Esnault's suggestion we will call
    $\Om_X^\bdot$ the \emph{Deligne-Du~Bois} complex.}

  Du~Bois observed that an important class of singularities are those for which
  $\Om_{X}^0$, the zeroth graded piece of the filtered complex $\Om_{X}^\bdot$, takes
  a particularly simple form. He pointed out that singularities satisfying this
  condition enjoy some of the nice Hodge-theoretic properties of smooth varieties
  cf.\ \eqref{thm:coh-invariance}.  These singularities were christened \emph{Du~Bois
    singularities} by Steenbrink \cite{Steenbrink83}. We will refer to them as
  \emph{DB singularities} and a variety with only DB singularities will be called
  \emph{DB}.

  The construction of the \uj{Deligne-}Du~Bois complex $\Om_X^\bdot$ is highly
  non-trivial, so we will not include it here. For a thorough treatment the
  interested reader should consult \cite[II.7.3]{MR2393625}. For alternative
  definitions, sufficient and equivalent criteria for DB singularities see
  \uj{\cite{Ishii85,Ishii87b,Kovacs99,Kovacs00c,MR2339829,KSS08}.}
\end{demo-r}

\begin{rem}\label{rem:db-is-sn}
  Recall that the seminormalization of $\sO_X$ is $h^0(\Om^0_X)$, the $0^{\text{th}}$
  cohomology sheaf of the complex $\Om^0_X$, and so $X$ is seminormal if and only if
  $\sO_X\simeq h^0(\Om^0_X)$ by \cite[5.2]{MR1741272} (cf.\ \cite[5.4.17]{Schwede06},
  \cite[4.8]{MR2339829}, and \uj{\cite[5.6]{MR2503989}}).  In particular, this
  implies that DB singularities are seminormal.
\end{rem}

\begin{ack}
  We would like to thank
  Valery Alexeev, H\'el\`ene Esnault, Osamu Fujino, Christopher Hacon, Max Lieblich
  and Karl Schwede for useful comments and discussions that we have benefited from.
  The otherwise unpublished Theorem~\ref{thm:dlt-models} was communicated to us by
  Christopher Hacon. We are grateful to Stefan Schr\"oer for letting us know about
  Example~\ref{ex:schroer}.  \ujv{We also thank the referee whose extremely careful
    reading helped us
    correct several errors and improve the presentation.}\\
\end{ack}

\section{A criterion for Du~Bois singularities}

In order to prove \eqref{thm:db-criterion} we first need the following abstract
derived category statement.

\begin{lem}\label{lem:derived-split}
  Let $\sfA,\sfB,\sfC,\sfA',\sfB',\sfC'$ be objects in a derived category and assume
  that there exists a commutative diagram in which the rows form exact triangles:
  \begin{equation}
    \label{eq:1}
    \xymatrix{ %
      \sfA \ar[r]^{\phi} \ar[d]_\alpha & \sfB \ar[r]^\psi \ar[d]_\beta & \sfC
      \ar[r]^{\zeta} 
      \ar[d]_\gamma & \sfA[1] \ar[d]^{\alpha[1]}\\ 
      \sfA' \ar[r]_{\phi'} & \sfB' \ar[r]_{\psi'} & \sfC' \ar[r]_{\zeta'}  & \sfA'[1]
    }
  \end{equation}
  Then there exist an object $\sfD$, an exact triangle;
  \begin{equation}
    \label{eq:2}
    \xymatrix{ %
      \sfD \ar[r] & \sfB'\oplus \sfC \ar[r] 
      & \sfC' \ar[r]^{+1}
      &\text{\vphantom{A},}  %
    }
  \end{equation}
  and a 
  map $\delta: \sfB\to \sfD$, such that if $\lambda$ denotes the composition $
  \xymatrix{ \lambda: \sfD\ar[r] & \sfB'\oplus \sfC \ar[r]^-{\ 0\oplus\id_{\sfC}\ }
    &\sfC}$, then $\lambda\circ\delta=\psi$ and
  \begin{enumerate-cont}\rmlabels
  \item $\alpha$ admits a left inverse if and only if $\delta$ admits one,
    $\delta':\sfD\to\sfB$ such that $\psi\circ\delta'=\lambda$, and
  \item $\alpha$ is an isomorphism if and only if $\delta$ is.
  \end{enumerate-cont}
\end{lem}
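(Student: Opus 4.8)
The plan is to construct $\sfD$ as the homotopy fiber (shifted cone) of the natural map $\sfB'\oplus\sfC\to\sfC'$ given by $(\psi',-\gamma)$ (or $\psi'-\gamma$ up to sign conventions), so that the exact triangle \eqref{eq:2} is forced by definition. Concretely, first I would form the map $\mu\leteq \psi'\oplus(-\gamma)\colon \sfB'\oplus\sfC\to\sfC'$ and let $\sfD\leteq \mu[-1]\to \sfB'\oplus\sfC\xrightarrow{\mu}\sfC'\xrightarrow{+1}$ be the associated triangle; this is exactly \eqref{eq:2}. The composition $\lambda\colon\sfD\to\sfB'\oplus\sfC\xrightarrow{0\oplus\id}\sfC$ is then defined as stated. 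The point of choosing this particular $\mu$ is that the commutativity $\psi'\circ\beta=\gamma\circ\psi$ says precisely that $(\beta,\psi)\colon\sfB\to\sfB'\oplus\sfC$ equalizes $\mu$ (lands in the ``kernel''), so by the universal property of the homotopy fiber — more precisely by the standard axiom TR3 / octahedron, applied to the commuting square — there exists a map $\delta\colon\sfB\to\sfD$ lifting $(\beta,\psi)$, i.e. the composite $\sfD\to\sfB'\oplus\sfC$ after $\delta$ equals $(\beta,\psi)$. Projecting to the $\sfC$ factor gives $\lambda\circ\delta=\psi$ immediately.

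Next I would identify $\sfD$ in a second way, so that its relation to $\alpha$ becomes visible. Applying the octahedral axiom to the maps $\sfB\xrightarrow{\beta}\sfB'\to\sfB'\oplus\sfC$ — or more transparently, taking the triangle $\sfA\xrightarrow{\phi}\sfB\xrightarrow{\psi}\sfC\xrightarrow{+1}$ and mapping it into the triangle $\sfA'\xrightarrow{\phi'}\sfB'\xrightarrow{\psi'}\sfC'\xrightarrow{+1}$ — one obtains, by the $3\times3$ (nine) lemma in triangulated categories, that the homotopy fiber of $(\psi',-\gamma)$ sits in an exact triangle $\sfA\xrightarrow{\alpha}\sfA'\to\sfD'\xrightarrow{+1}$ for the cone $\sfD'$ of $\alpha$, together with a compatible triangle $\sfB\xrightarrow{\delta}\sfD\to\sfD'\xrightarrow{+1}$. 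In other words, the cone of $\delta$ is canonically the cone of $\alpha$ (shifted appropriately). Granting this, parts (1) and (2) are formal: $\alpha$ is an isomorphism iff $\Cone(\alpha)=0$ iff $\Cone(\delta)=0$ iff $\delta$ is an isomorphism, which is (2); and for (1), $\alpha$ admits a left inverse iff $\Cone(\alpha)$ is a direct summand complement making $\sfA'\simeq\sfA\oplus\Cone(\alpha)$ compatibly, which transfers along the identification of cones to the statement that $\sfD\simeq\sfB\oplus\Cone(\delta)$, i.e. that $\delta$ has a left inverse $\delta'$; chasing the splitting through the triangle \eqref{eq:2} and the relation $\lambda\circ\delta=\psi$ then yields $\psi\circ\delta'=\lambda$.

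The one genuinely delicate point — and the step I expect to be the main obstacle — is the non-functoriality of cones in a triangulated category: the map $\delta$ produced by TR3 is not unique, and to get the clean statement ``$\Cone(\delta)\simeq\Cone(\alpha)$'' one must make compatible choices. The standard device is the $3\times 3$ lemma (Beilinson–Bernstein–Deligne, or May's axiom for triangulated categories), which guarantees that the commuting square \eqref{eq:1} can be completed to a $3\times3$ diagram of triangles whose rows and columns are all exact; the bottom row (or last column) of that diagram is exactly the triangle relating $\sfA'$, $\sfD$ (as the total fiber/cofiber of the square), and $\Cone(\alpha)$. I would invoke this lemma rather than reprove it, and organize the argument so that $\delta$ and $\sfD$ are \emph{defined} via the completed $3\times3$ diagram from the outset; this sidesteps the uniqueness issue entirely. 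The remaining verifications — that $\lambda\circ\delta=\psi$, and the bookkeeping in (1) turning a left inverse of $\alpha$ into one of $\delta$ with $\psi\circ\delta'=\lambda$ — are then routine diagram chases in the completed diagram, using only that a morphism in a triangulated category is an isomorphism iff its cone vanishes, and that ``admits a left inverse'' is equivalent to ``the defining triangle splits''.
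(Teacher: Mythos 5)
Your construction of $\sfD$ and of $\delta$ is the same as the paper's (the paper completes $\eta=(-\psi')\oplus\gamma$ to a triangle, which is your $-\mu$; it then also gets $\delta$ from the vanishing of the composite into $\sfC'$), and your derivation of part (2) from the identification of the cone of $\delta$ with the cone of $\alpha$ is correct. The gap is the claim that part (1) is likewise ``formal'' once one knows $\operatorname{Cone}(\delta)\simeq\operatorname{Cone}(\alpha)$. Write the two relevant column triangles as $\sfA\xrightarrow{\ \alpha\ }\sfA'\to\sfA''\xrightarrow{\ w_\alpha\ }\sfA[1]$ and $\sfB\xrightarrow{\ \delta\ }\sfD\to\sfB''\xrightarrow{\ w_\delta\ }\sfB[1]$, and let $u:\sfA''\to\sfB''$ be the isomorphism of cones. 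A morphism admits a left inverse exactly when the connecting map of its cone triangle vanishes, and the compatibility that the $3\times3$ lemma provides is $w_\delta\circ u=\pm\,\phi[1]\circ w_\alpha$. Hence $w_\alpha=0$ does force $w_\delta=0$, but in the converse direction $w_\delta=0$ only gives $\phi[1]\circ w_\alpha=0$, i.e.\ that $w_\alpha$ factors through the fiber of $\phi$ --- strictly weaker than $w_\alpha=0$. The hypothesis that rescues the converse is precisely the side condition $\psi\circ\delta'=\lambda$, which your outline treats as a consequence to be ``chased'' at the end rather than as an input. (The forward direction also needs care: if $\delta'$ is an arbitrary splitting with complementary idempotent $s\circ p$, then $\psi\circ\delta'=\lambda-\lambda\circ s\circ p$, and arranging $\lambda\circ s=0$ is a genuine lifting problem, not bookkeeping.)

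The missing ingredient is the one piece of structure the paper actually extracts from the octahedron: the exact triangle $\sfA'\to\sfD\xrightarrow{\ \lambda\ }\sfC\xrightarrow{\ \vartheta\ }\sfA'[1]$ together with the identification $\vartheta=\zeta'\circ\gamma=\alpha[1]\circ\zeta$. With this triangle one compares $\sfA\to\sfB\to\sfC\to\sfA[1]$ with $\sfA'\to\sfD\to\sfC\to\sfA'[1]$ over $\id_{\sfC}$, and both directions of (1) become a single application of the completion axiom for morphisms of triangles: a left inverse $\alpha'$ of $\alpha$ satisfies $\alpha'[1]\circ\vartheta=\zeta\circ\id_{\sfC}$ and so extends to a $\delta'$ with $\psi\circ\delta'=\lambda$; conversely, a $\delta'$ with $\delta'\circ\delta=\id_{\sfB}$ and $\psi\circ\delta'=\lambda$ extends to an $\alpha':\sfA'\to\sfA$, and $(\alpha'\circ\alpha,\id_{\sfB},\id_{\sfC})$ is then an endomorphism of the first triangle, so $\alpha'\circ\alpha$ is invertible and $(\alpha'\circ\alpha)^{-1}\circ\alpha'$ is a left inverse. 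I suggest you keep the cone identification for (2) but replace the cone-splitting argument for (1) by this comparison over $\id_{\sfC}$.
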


\begin{proof}
  Let $\eta: \sfB'\oplus \sfC \to \sfC'$ be the natural map induced by $-\psi'$ on
  $\sfB'$ and $\gamma$ on $\sfC$, and $\sfD$ the object that completes $\eta$ to an
  exact triangle as in (\ref{eq:2}).

  Next consider the following diagram:
  $$
  \xymatrix{ %
    && \sfA'[1] \ar[dddll]_{+1} \ar@{-->}[drr]^{\exists} & \\
    \sfC \ar[dd]_{+1} \ar@{-->}[urr]^{\exists\,\vartheta}  &&&& \sfD[1]
    \ar@{-->}[llll]_{+1}^{\exists}|!{[dd];[llu]}\hole |!{[llu];[lllldd]}\hole 
    \ar[dddll]_{+1}|!{[dd];[llu]}\hole|!{[ddllll];[dd]}\hole
    \\ 
    \\
    \sfB' \ar[rrrr]_{\psi'} \ar[rrd]_{(-\id_{\sfB'},0)} &&&& \sfC'
    \ar[lluuu]_(.45){\zeta'} \ar[uu]\\
    && \sfB'\oplus \sfC \ar[lluuu]_{0_{\sfB'}\oplus\id_{\sfC}}
    |!{[llu];[rru]}\hole|!{[uuuu];[ull]}\hole
    \ar[rru]_\eta \\  
  }
  $$
  The bottom triangle \uj{($B',C',B'\oplus C$)} is commutative with the maps
  indicated.  The triangles with one edge common with the bottom triangle are exact
  triangles with the obvious maps.  Then by the octahedral axiom, the maps in the top
  triangle denoted by the broken arrows exist and form an exact triangle.

  Observe that it follows that the induced map $\vartheta: \sfC\to \sfA'[1]$ agrees
  with $\zeta'\circ (\eta\resto{\sfC})=\zeta'\circ\gamma$ which in turn equals
  $\alpha[1]\circ\zeta$ by (\ref{eq:1}).

  Therefore one has the following commutative diagram where the rows form exact
  triangles:
  $$
  \xymatrix{ %
    \sfA \ar[r] \ar[d]_(0.35)\alpha & \sfB \ar[r]^\psi
    \ar@{-->}[d]_(0.35){\exists\delta} & \sfC
    \ar[r]^-{\zeta} \ar[d]_(0.35){\id_{\sfC}} & \sfA[1] \ar[d]_(0.35){\alpha[1]}\\
    \sfA' \ar[r] \ar@/_1pc/[u]_(0.35){\alpha'} & \sfD \ar[r]_\lambda
    \ar@/_1pc/@{-->}[u]_(0.35){\exists\delta'} & \sfC \ar@/_1pc/[u]_(0.35){\id_{\sfC}} 
    \ar[r]_-{\vartheta} & \sfA'[1],\!\! \ar@/_1pc/[u]_(0.35){\alpha'[1]}  \\ 
  }
  $$
  and as $\vartheta=\alpha[1]\circ\zeta$ it follows that a $\delta$ exists that makes
  the diagram commutative. Now, if $\alpha$ admits a left inverse $\alpha':\sfA'\to
  \sfA$, then $\alpha'[1]\circ\vartheta= \alpha'[1]\circ\alpha[1]\circ\zeta=
  \zeta=\zeta\circ\id_{\sfC}$, and hence $\delta$ admits a left inverse,
  $\delta':\sfD\to \sfB$ and clearly $\psi\circ\delta'=\lambda$. The converse is even
  simpler: if $\psi\circ\delta'=\lambda$, then $\alpha'$ exists and it must be a left
  inverse. Finally, it is obvious from the diagram that $\alpha$ is an isomorphism if
  and only if $\delta$ is.
\end{proof}

We are now ready to prove our DB criterion.

\begin{newnum}{\it Proof of \eqref{thm:db-criterion}}
  Consider the following commutative diagram with exact rows:
  \begin{equation*}
    \xymatrix{
      \sI_{W\subseteq X} \ar[r] \ar[d]_{\varrho} & \sO_X \ar[r]  \ar[d] & \sO_W
      \ar[r]^-{+1}  \ar[d] & \\ 
      \myR{f}_*\sI_{F\subseteq Y} \ar[r] \ar@{-->}@/_1pc/[u]_{\varrho'} & \myR{f}_*\sO_Y \ar[r]
      & \myR{f}_*\sO_F \ar[r]^-{+1}   & \\ 
    }
  \end{equation*}
  It follows by Lemma~\ref{lem:derived-split} that there exists an object $\sfQ$,
  an exact triangle in the derived category of $\sO_X$-modules,
  \begin{equation}
    \label{eq:5}
    \xymatrix{
      \sfQ \ar[r] & \myR{f}_*\sO_Y \oplus \sO_W \ar[r] & \myR{f}_*\sO_F
      \ar[r]^-{+1} &
    },
  \end{equation}
  and a 
  map $\vartheta:\sO_X\to \sfQ$ that admits a left inverse, $\vartheta':\sfQ\to
  \sO_X$.
  
  Now consider a similar commutative diagram with exact rows:
  $$
  \xymatrix{ \sfJ \ar[r] \ar@{-->}[d]_\psi & \Om^0_X \ar[r] \ar[d]_\mu & \Om^0_W
    \ar[r]^-{+1}  \ar[d]_\nu & \\
    \myR{f}_*\sfK \ar[r] & \myR{f}_*\Om^0_Y \ar[r] & \myR{f}_*\Om^0_F \ar[r]^-{+1} &
    .  }
  $$
  Here $\sfJ$ and $\sfK$ represent the appropriate objects in the appropriate derived
  categories that make the rows exact triangles. The vertical maps $\mu$ and $\nu$
  exist and form a commutative square because of the basic properties of the
  \uj{Deligne-}Du~Bois complex and their existence and compatibility imply the
  existence of the map $\psi$ by the basic properties of derived categories.

  It follows, again, by Lemma~\ref{lem:derived-split} that there exists an object
  $\sfD$, an exact triangle in the derived category of $\sO_X$-modules,
  \begin{equation}
    \label{eq:6}
    \xymatrix{
      \sfD \ar[r] & \myR{f}_*\Om^0_Y \oplus \Om^0_W \ar[r] & \myR{f}_*\Om^0_F
      \ar[r]^-{+1} &  
    },
  \end{equation}
  and a 
  map $\delta:\Om^0_X\to \sfD$.

  \uj{By the basic properties of exact triangles,} the natural transformation $\Xi:
  \sO \to \Om^0$ induces compatible maps between the exact triangles of (\ref{eq:5})
  and (\ref{eq:6}). \uj{We would also like to show that these maps are compatible
    with the maps $\vartheta$, and $\delta$ obtained from
    Lemma~\ref{lem:derived-split}}:
  $$
  \xymatrix{%
    \sO_X \ar[dr]_\vartheta \ar[d]_\lambda\\
    \Om^0_X \ar[dr]_\delta & \sfQ \ar[r] \ar@{->}@/_1pc/[ul]_{\vartheta'}
    \ar@{-->}[d]_-\xi & \myR\pi_*\sO_{Y}\oplus \sO_W \ar[r]
    \ar[d]_\eta & \myR\pi_*\sO_F    \ar[r]^-{+1} \ar[d]_\zeta & \\
    & \sfD \ar[r] & \myR\pi_*\Om^0_{Y}\oplus \Om^0_W \ar[r] & \myR\pi_*\Om^0_F
    \ar[r]^-{+1} &.  }
  $$

  \uj{
    \begin{subclaim}\label{claim:diagram-commutes}
      Under the assumptions of the theorem the above diagram is commutative.
    \end{subclaim}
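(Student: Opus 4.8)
The plan is to verify that every square and triangle in the displayed diagram commutes, separating the faces into the ``formal'' ones, which reduce to naturality of the transformation $\Xi\colon\sO\to\Om^0$ together with functoriality of $\myR f_*$, and the one genuinely new face, the upper-left triangle asserting $\xi\circ\vartheta=\delta\circ\lambda$.

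For the formal faces, recall from the proof of Lemma~\ref{lem:derived-split} that the middle arrow of the output triangle \eqref{eq:2} is precisely the map of which the cone was taken. In the first application this means the middle arrow of \eqref{eq:5} is the natural map $\myR f_*\sO_Y\oplus\sO_W\to\myR f_*\sO_F$ assembled from $\myR f_*$ of $\sO_Y\to\sO_F$ and from the natural map $\sO_W\to\myR f_*\sO_F$; in the second application the middle arrow of \eqref{eq:6} is the corresponding natural map in the $\Om^0$-world. Because $\Xi$ commutes with $\sO_Y\to\sO_F$, with $\sO_W\to\sO_F$, and with the transition maps $\sO_?\to\Om^0_?$, and because $\myR f_*$ is a functor, the square built from these two middle arrows and the vertical maps $\eta$, $\zeta$ commutes on the nose. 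Applying the octahedral axiom and (TR3) to this square produces a map $\xi\colon\sfQ\to\sfD$ for which the other two right-hand squares and the $+1$-square commute, so that $\xi$ becomes a morphism of the exact triangles \eqref{eq:5} and \eqref{eq:6} lying over $(\eta,\zeta)$.

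The substance is the upper-left triangle, and here one must do more than quote formal properties, since none of $\sfQ$, $\sfD$, $\vartheta$, $\delta$, $\xi$ is canonical. I would use the \emph{second} exact triangle that $\sfQ$ and $\sfD$ inherit from the construction in Lemma~\ref{lem:derived-split}: in the notation of that lemma, $\sfD$ also fits into a triangle $\sfA'\to\sfD\to\sfC\to\sfA'[1]$ whose connecting map is $\alpha[1]$ composed with the connecting map of the top row of \eqref{eq:1}, and $\delta$ is the (TR3) fill-in of the evident morphism of such triangles over $(\alpha,\delta,\id_\sfC)$. Concretely these triangles are
$$\myR f_*\sI_F\longrightarrow\sfQ\longrightarrow\sO_W\longrightarrow(\myR f_*\sI_F)[1]\qquad\text{and}\qquad\myR f_*\sfK\longrightarrow\sfD\longrightarrow\Om^0_W\longrightarrow(\myR f_*\sfK)[1],$$
with $\vartheta$ the fill-in over $(\varrho,\vartheta,\id_{\sO_W})$ and $\delta$ the fill-in over $(\psi,\delta,\id_{\Om^0_W})$. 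The transformation $\Xi$ induces a morphism between these two triangles, the required squares again being instances of naturality and of the fact that $\Xi$ is a morphism of $(\sI_W\to\sO_X\to\sO_W\to)$ and $(\sfJ\to\Om^0_X\to\Om^0_W\to)$; hence $\xi$ may be taken to be the (TR3) fill-in for this morphism. With that choice $\xi\circ\vartheta$ and $\delta\circ\lambda$ are two (TR3) fill-ins of one and the same morphism of exact triangles, namely from $(\sI_W\to\sO_X\to\sO_W\to)$ into $(\myR f_*\sfK\to\sfD\to\Om^0_W\to)$, so they differ by a map factoring through $\sO_X\to\sO_W$; using the identity $(\sfQ\to\sO_W)\circ\vartheta=(\sO_X\to\sO_W)$ built into Lemma~\ref{lem:derived-split}, this residual difference can be absorbed by adjusting $\xi$ within its (TR3)-indeterminacy, and one checks that this adjustment leaves the right-hand squares untouched.

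I expect the main difficulty to be exactly this alignment of choices: mapping cones are not functorial, so the two applications of Lemma~\ref{lem:derived-split} and the fill-ins $\vartheta$, $\delta$, $\xi$ must be made mutually compatible, which in practice amounts to rerunning the octahedra of that proof for the $\Xi$-morphism of the two input diagrams and bookkeeping the resulting maps. One must also verify that the adjustments do not destroy the left inverse $\vartheta'$ of $\vartheta$; this is the point that matters downstream, because once $Y$, $F$, $W$ are known to be DB the maps $\eta$ and $\zeta$ are isomorphisms, hence so is $\xi$, and then $\delta\circ\lambda=\xi\circ\vartheta$ acquires the left inverse $\vartheta'\circ\xi^{-1}$, which by the splitting criterion for DB singularities forces $\sO_X\simeq\Om^0_X$.
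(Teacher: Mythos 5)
You have located the right difficulty---the maps $\vartheta$, $\delta$, $\xi$ arise from separate, non-functorial cone constructions and non-unique (TR3) fill-ins---but your resolution has a genuine gap at exactly the decisive point. You first take $\xi$ to be a fill-in over $(\eta,\zeta)$ for the triangles \eqref{eq:5} and \eqref{eq:6}, and then want to re-choose it as a fill-in for the second pair of triangles $\myR{f}_*\sI_{F}\to\sfQ\to\sO_W$ and $\myR{f}_*\sfK\to\sfD\to\Om^0_W$. It is true that the difference $\xi\circ\vartheta-\delta\circ\lambda$ factors as $h\circ(\sO_X\to\sO_W)$ (granting that you have already matched the auxiliary choices, e.g.\ that $\psi$ composed with the $\Xi$-induced map $\sI_W\to\sfJ$ agrees with $\myR{f}_*(\sI_F\to\sfK)\circ\varrho$, which is itself another fill-in compatibility), and replacing $\xi$ by $\xi-h\circ(\sfQ\to\sO_W)$ repairs the upper-left triangle; but the square against $\eta$ then commutes only if the composite of $h$ with $\sfD\to\myR{f}_*\Om^0_Y\oplus\Om^0_W$ dies after precomposition with $\sfQ\to\sO_W$, and nothing in your argument controls $h$. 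The sentence ``one checks that this adjustment leaves the right-hand squares untouched'' is the subclaim in disguise: TR3 and the octahedral axiom alone do not produce a single $\xi$ simultaneously compatible with both sets of data, precisely because fill-ins are not unique and cones are not functorial. It is also a warning sign that your argument nowhere uses the hypothesis that $Y$, $F$, $W$ are DB, even though the claim is asserted ``under the assumptions of the theorem.''

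The paper closes this gap by a different mechanism, which supplies the rigidity your bookkeeping lacks. Since $Y$, $F$, $W$ are DB, $\eta$ and $\zeta$ are isomorphisms, hence so is $\xi$; one then passes to $h^0$, where the objects become concrete sheaves, and proves $h^0(\xi)\circ h^0(\vartheta)=h^0(\delta)\circ h^0(\lambda)$ using the commutativity of the right-hand squares, functoriality of the $\Xi$-induced maps, and the injectivity of $\mu\colon h^0(\sfD)\to h^0\bigl(\myR{f}_*\Om^0_Y\oplus\Om^0_W\bigr)$, which holds because $h^{-1}(\myR{f}_*\Om^0_F)=0$. Finally this identity is lifted from $h^0$ to the derived category by \eqref{subclaim:h-nought}: a morphism from a complex concentrated in degree $0$ to a complex with no cohomology in negative degrees is determined by its $h^0$. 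Some input of this kind (or an honest chain-level model in which all the relevant maps commute on the nose, so that cones can be formed functorially) is what your proposal is missing; once it is available, the delicate alignment of choices you describe becomes unnecessary.
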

    
    \begin{proof}
      First observe that if $Y, F$, and $W$ are all DB, then $\eta$ and $\zeta$ are
      isomorphisms. Then it follows that $\xi$ is an isomorphism as well. Next
      consider the $0^\text{th}$ cohomology sheaves of all the complexes in the
      diagram. From the long exact sequence of cohomology induced by exact triangles
      we obtain the following diagram:
      $$
      \xymatrix{%
        \sO_X \ar[dr]_{h^0(\vartheta)} \ar[d]_{h^0(\lambda)}\\
        h^0(\Om^0_X) \ar[dr]_{h^0(\delta)} & h^0(\sfQ) \ar@{^{(}->}[r]^-\nu
        \ar@{->}@/_1pc/[ul]_{h^0(\vartheta')} \ar[d]^{h^0(\xi)}_\simeq &
        h^0(\myR\pi_*\sO_{Y}\oplus \sO_W) \simeq \pi_*\sO_{Y}\oplus \sO_W
        \ar@<-5ex>[d]^{h^0(\eta)}_\simeq \\ 
        & h^0(\sfD) \ar@{^{(}->}[r]_-\mu & h^0(\myR\pi_*\Om^0_{Y}\oplus \Om^0_W) \simeq
        \pi_*\sO_{Y}\oplus \sO_W  }
      $$
      From the commutativity of the exact triangles we obtain that
      $$
      h^0(\eta)\circ\nu=\mu\circ h^0(\xi). 
      $$
      Furthermore, the functoriality of the maps ${h^0(\lambda)}$ and $h^0(\eta)$ (they are
      induced by $\Xi$) implies that we also have
      $$
      h^0(\eta)\circ\nu\circ h^0(\vartheta) =\mu\circ h^0(\delta)\circ {h^0(\lambda)}.
      $$
      Then it follows that 
      $$
      \mu\circ h^0(\xi)\circ h^0(\vartheta) =\mu\circ h^0(\delta)\circ {h^0(\lambda)}.
      $$
      Observe that $\mu$ is injective since $h^{-1}(\myR{\pi}_*\Om^0_F)=0$ and hence
      $$
      h^0(\xi)\circ h^0(\vartheta) = h^0(\delta)\circ {h^0(\lambda)}.
      $$
      Finally observe that ${h^0(\lambda)}$ determines the entire map
      $\lambda:\sO_X\to \Om^0_X$ by \eqref{subclaim:h-nought} and so we obtain that
      $\xi\circ\vartheta=\delta\circ\lambda$ as desired.
    \end{proof} 

    \begin{subclaim}\label{subclaim:h-nought}
      Let $\sfA,\sfB$ objects in a derived category such that $h^{i}(\sfA)=0$ for
      $i\neq 0$ and $h^j(\sfB)=0$ for $j<0$. Then any morphism $\alpha:\sfA\to\sfB$
      is uniquely determined by $h^0(\alpha)$.
    \end{subclaim}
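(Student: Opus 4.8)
The plan is to prove the slightly stronger statement that $\alpha\mapsto h^0(\alpha)$ defines a \emph{bijection} $\Hom(\sfA,\sfB)\xrightarrow{\ \sim\ }\Hom\bigl(h^0(\sfA),h^0(\sfB)\bigr)$; injectivity is all that is needed, but bijectivity falls out of the argument for free. The whole thing is a formal consequence of the truncation formalism, and it uses the two hypotheses only through standard facts: an object whose cohomology is concentrated in a single degree is isomorphic to that cohomology sheaf placed in that degree, and there are no nonzero morphisms from an object with cohomology in degrees $\le 0$ to an object with cohomology in degrees $\ge 1$. Since $h^i(\sfA)=0$ for $i\ne 0$ we have $\sfA\simeq h^0(\sfA)[0]$, so after setting $M\leteq h^0(\sfA)$ it suffices to treat $\sfA=M[0]$.

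Next I would apply $\Hom(M[0],-)$ to the truncation triangle $\tau_{\le 0}\sfB\to\sfB\to\tau_{\ge 1}\sfB\xrightarrow{+1}\tau_{\le 0}\sfB[1]$. The hypothesis $h^j(\sfB)=0$ for $j<0$ identifies $\tau_{\le 0}\sfB\simeq h^0(\sfB)[0]$, while $\tau_{\ge 1}\sfB$ has cohomology in degrees $\ge 1$ and $\tau_{\ge 1}\sfB[-1]$ in degrees $\ge 2$; hence $\Hom\bigl(M[0],\tau_{\ge 1}\sfB\bigr)=\Hom\bigl(M[0],\tau_{\ge 1}\sfB[-1]\bigr)=0$ by the vanishing recalled above. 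Feeding this into the long exact sequence of the triangle, the canonical morphism $h^0(\sfB)[0]\to\sfB$ induces an isomorphism $\Hom\bigl(M[0],h^0(\sfB)[0]\bigr)\xrightarrow{\ \sim\ }\Hom(M[0],\sfB)$.

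Finally I would identify $\Hom\bigl(M[0],h^0(\sfB)[0]\bigr)=\Hom\bigl(M,h^0(\sfB)\bigr)$ (full faithfulness of the degree-$0$ embedding of the abelian category into its derived category) and check that the composite $\Hom\bigl(M,h^0(\sfB)\bigr)\xrightarrow{\ \sim\ }\Hom(M[0],\sfB)\xrightarrow{\ h^0\ }\Hom\bigl(M,h^0(\sfB)\bigr)$ is the identity: applying $h^0$ to a composite $M[0]\xrightarrow{g}h^0(\sfB)[0]\to\sfB$ returns $g$, since $h^0$ of $h^0(\sfB)[0]\to\sfB$ is $\id_{h^0(\sfB)}$. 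Therefore $h^0\colon\Hom(M[0],\sfB)\to\Hom\bigl(M,h^0(\sfB)\bigr)$ is bijective, which is precisely the asserted uniqueness (take $M=h^0(\sfA)$).

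There is no genuine obstacle; the only step needing a word of justification is the vanishing $\Hom(D^{\le 0},D^{\ge 1})=0$, which follows e.g.\ by representing $\tau_{\ge 1}\sfB$ by a complex of injectives concentrated in degrees $\ge 1$, so that any chain map out of a complex concentrated in degree $0$ vanishes. Equivalently, one can bypass truncation and read the claim off the hyper-$\Ext$ spectral sequence $E_2^{p,q}=\prod_i\Ext^p\bigl(h^i(\sfA),h^{i+q}(\sfB)\bigr)\Rightarrow\Hom\bigl(\sfA,\sfB[p+q]\bigr)$: the hypotheses force every $E_2$-term contributing to $\Hom(\sfA,\sfB)$ to have $i=0$, $q\ge 0$ and $p=-q\le 0$, hence $p=q=0$, and all differentials into and out of $E_r^{0,0}$ land in groups that vanish for degree reasons, so $\Hom(\sfA,\sfB)=E_\infty^{0,0}=E_2^{0,0}=\Hom\bigl(h^0(\sfA),h^0(\sfB)\bigr)$ via the edge map $h^0$.
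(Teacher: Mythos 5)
Your argument is correct, and it reaches a slightly stronger conclusion than the paper needs: you show that $\alpha\mapsto h^0(\alpha)$ is a bijection $\Hom(\sfA,\sfB)\to\Hom\bigl(h^0(\sfA),h^0(\sfB)\bigr)$, whereas the claim only asks for injectivity. Your route is also genuinely different from the paper's. The paper argues with explicit representatives: it replaces $\sfA$ by the complex with $h^0(\sfA)$ in degree $0$ and zeros elsewhere, replaces $\sfB$ by a complex $\wt\sfB$ with no terms in negative degrees (so that $h^0(\wt\sfB)\subseteq\wt\sfB^0$), and observes that a chain map between these representatives has a single possibly nonzero component, which is forced to be $h^0(\alpha)$. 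You instead stay inside the triangulated formalism: identify $\sfA\simeq h^0(\sfA)[0]$, apply $\Hom\bigl(h^0(\sfA)[0],-\bigr)$ to the truncation triangle $\tau_{\le 0}\sfB\to\sfB\to\tau_{\ge 1}\sfB\to\tau_{\le 0}\sfB[1]$, and use the vanishing $\Hom(D^{\le 0},D^{\ge 1})=0$ together with $\tau_{\le 0}\sfB\simeq h^0(\sfB)[0]$, then check that the resulting isomorphism is computed by $h^0$. What your version buys is that it sidesteps the one point the paper's proof glosses over, namely that a morphism in the derived category ``can be represented by a morphism of complexes'' between the chosen representatives --- in the paper's setting of $\sO_X$-modules this is legitimate (take $\wt\sfB$ to be a bounded-below complex of injectives concentrated in degrees $\ge 0$, exactly the device you invoke to justify the vanishing), but your long-exact-sequence argument makes the bookkeeping of roofs and homotopies unnecessary and works verbatim in the derived category of any abelian category; the paper's version, by contrast, is shorter and more concrete. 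Your spectral-sequence remark at the end is a fine second proof under the boundedness present in the application, though the truncation argument is the cleaner of the two and needs no such hypothesis beyond what is stated.
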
 
    
    \begin{proof}
      By the assumption, the morphism $\alpha:\sfA\to \sfB$ can be represented by a
      morphism of complexes $\wt\alpha:\wt\sfA\to\wt\sfB$, where $\sfA\simeq \wt\sfA$
      such that $\wt\sfA^0= h^0(\sfA)$ and $\wt\sfA^i=0$ for all $i\neq 0$, and
      $\sfB\simeq \wt\sfB$ such that $h^0(\wt\sfB)\subseteq \wt\sfB^0$. However
      $\wt\alpha$ has only one non-zero term, $h^0(\alpha)$. This proves the claim.
    \end{proof}
  }

  \noindent
  As $\xi$ is an isomorphism, we obtain a map, 
  \ujjj{%
    $$
    \lambda'=\vartheta'\circ\xi^{-1}\circ \delta: \Om^0_X\to \sO_X.
    $$
    By \eqref{claim:diagram-commutes} $\xi\circ\vartheta=\delta\circ\lambda$, so it
    follows that
    $$
    \lambda'\circ\lambda=\vartheta'\circ\xi^{-1}\circ \delta\circ\lambda=
    \vartheta'\circ\xi^{-1}\circ\xi\circ\vartheta=
    \vartheta'\circ\vartheta=\id_{\sO_X}.
    $$
    The last equality follows from the choice of $\vartheta'$.%
  }
  %
  \ujjj{%
    Therefore $\lambda'$ is a left inverse to $\lambda$ and so} the statement follows
  from \cite[Thm.~2.3]{Kovacs99}.  \qed
%
\end{newnum}

We have a similar statement for seminormality. The proof is however much more
elementary.

\begin{prop}
  Let ${f}: Y\to X$ be a proper 
  morphism between reduced schemes of finite type over $\bC$, $W\subseteq X$ a closed
  reduced subscheme, and $F\leteq {f}^{-1}(W)$, equipped with the induced reduced
  subscheme structure. Assume that the natural map $\sI_{W\subseteq X} \to
  {f}_*\sI_{F\subseteq Y}$ is an isomorphism. Then if $Y$ and $W$ are seminormal,
  then so is $X$.
\end{prop}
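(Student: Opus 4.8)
The plan is to use the following characterization, valid over a field of characteristic $0$: a reduced scheme $X$ is seminormal if and only if every finite surjective universal homeomorphism $g\colon Z\to X$ with $Z$ reduced is an isomorphism. Indeed, such a $g$ is automatically birational, since at each generic point the corresponding residue field extension is finite and purely inseparable, hence trivial in characteristic $0$; and the seminormalization morphism is itself of this type, so the criterion is equivalent to the usual definition. We may also assume $f$ is surjective, which follows from the hypothesis $\sI_{W\subseteq X}\simeq f_*\sI_{F\subseteq Y}$ together with the properness of $f$. So fix a finite surjective universal homeomorphism $g\colon Z\to X$ with $Z$ reduced; the goal is to prove that $g$ is an isomorphism.

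The first and crucial step is to factor $f$ through $g$. Set $Z_Y\leteq (Z\times_X Y)_{\mathrm{red}}$, with the induced morphisms $p\colon Z_Y\to Z$ and $q\colon Z_Y\to Y$. Since the class of finite surjective universal homeomorphisms is stable under base change and under nilpotent thickenings, $q$ (which factors as $Z_Y\hookrightarrow Z\times_X Y\to Y$) is a finite surjective universal homeomorphism; as $Z_Y$ is reduced and $Y$ is seminormal, $q$ is an isomorphism. Put $\sigma\leteq p\circ q^{-1}\colon Y\to Z$; from the Cartesian square one gets $g\circ\sigma=f$, and $\sigma$ is surjective because the first projection $Z\times_X Y\to Z$ is (here one uses that $f$ is surjective). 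Writing $W'\leteq g^{-1}(W)_{\mathrm{red}}\subseteq Z$, one checks on underlying topological spaces that $\sigma^{-1}(W')_{\mathrm{red}}=F$ and that $g$ restricts to a finite surjective universal homeomorphism $W'\to W$; since $W$ is seminormal, this restriction is an isomorphism.

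Now I would transport the hypothesis along this factorization. Because $g$ is finite, $g_*$ is exact on quasi-coherent sheaves, so applying it to $0\to\sI_{W'\subseteq Z}\to\sO_Z\to\sO_{W'}\to 0$ gives a short exact sequence on $X$, and the isomorphism $W'\cong W$ identifies $g_*\sO_{W'}$ with $\sO_W$ compatibly with the restriction maps out of $\sO_X$ and $g_*\sO_Z$. By the five lemma it therefore suffices to show that the natural map $a\colon\sI_{W\subseteq X}\to g_*\sI_{W'\subseteq Z}$ is an isomorphism. The morphism $\sigma$ induces a natural map $\sI_{W'\subseteq Z}\to\sigma_*\sI_{F\subseteq Y}$ (a function vanishing on $W'$ pulls back to one vanishing on $\sigma^{-1}(W')\supseteq F$), and this map is injective because $\sigma$ is surjective and $Z$ is reduced; applying the exact functor $g_*$ and using $g_*\sigma_*=f_*$ yields an injective map $b\colon g_*\sI_{W'\subseteq Z}\to f_*\sI_{F\subseteq Y}$. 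By construction $b\circ a$ is the natural map $\sI_{W\subseteq X}\to f_*\sI_{F\subseteq Y}$, which is an isomorphism by hypothesis; hence $b$ is surjective, and being injective as well it is an isomorphism, so $a=b^{-1}\circ(b\circ a)$ is too. The five lemma then shows that $\sO_X\to g_*\sO_Z$ is an isomorphism, i.e. $g$ is an isomorphism, and therefore $X$ is seminormal.

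I expect the only real obstacle to be the construction of $\sigma$ with $g\circ\sigma=f$, which is exactly where seminormality of $Y$ enters, via the principle that a finite surjective universal homeomorphism from a reduced scheme onto a seminormal scheme is an isomorphism. Everything after that is formal: exactness of finite pushforward and a diagram chase, with the only further geometric input being seminormality of $W$, used to identify $g_*\sO_{W'}$ with $\sO_W$.
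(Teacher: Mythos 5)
Your argument is correct in substance but follows a genuinely different route from the paper. The paper's proof is a short ring-theoretic computation with \emph{relative} seminormalizations: it takes the seminormalization $\sO_W^{sn}$ of $\sO_W$ in $f_*\sO_F$ and $\sO_X^{sn}$ of $\sO_X$ in $f_*\sO_Y$, uses the hypothesis $\sI_{W\subseteq X}\simeq f_*\sI_{F\subseteq Y}$ to identify $\sO_W^{sn}$ with $\sO_X^{sn}/\sI_{W\subseteq X}$, concludes from seminormality of $W$ that $\sO_X\simeq\sO_X^{sn}$ (i.e., $X$ is seminormal inside $f_*\sO_Y$), and finally invokes seminormality of $Y$. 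You instead test $X$ against an arbitrary finite surjective universal homeomorphism $g\colon Z\to X$ with $Z$ reduced, lift $g$ through the family by base change plus reduction (this is where seminormality of $Y$ enters, producing $\sigma\colon Y\to Z$ with $g\circ\sigma=f$), identify $W'\to W$ as an isomorphism using seminormality of $W$, and finish with exactness of $g_*$ and a diagram chase transporting the hypothesis to $a\colon\sI_{W\subseteq X}\to g_*\sI_{W'\subseteq Z}$. Your approach is more geometric and makes visible how the test cover splits over the family, at the cost of relying on the characteristic-zero identification of seminormality with the universal-homeomorphism (weak normality) criterion; the paper's argument is shorter, stays entirely inside the structure sheaves, and does not need that characterization.

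One inaccuracy: the hypothesis together with properness of $f$ does \emph{not} imply that $f$ is surjective. (Take $X$ a disjoint union of two components, $Y$ one of them with $f$ the inclusion, and $W$ the other; then $\sI_{W\subseteq X}\to f_*\sI_{F\subseteq Y}$ is an isomorphism.) What injectivity of that map actually gives is $X=f(Y)\cup W$, and this weaker statement suffices for the only place you use surjectivity of $\sigma$: a section of $\sI_{W'\subseteq Z}$ killed by pullback to $Y$ vanishes at every point of $\sigma(Y)\cup W'=g^{-1}(f(Y)\cup W)=Z$, hence is zero since $Z$ is reduced. With that adjustment your proof is complete.
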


\begin{proof}
  First, not yet assuming that $W$ or $Y$ are seminormal, let $\sO^{sn}_W$ be the
  seminormalization of $\sO_W$ in ${f}_*\sO_F$ and $\sO_X^{sn}$ the seminormalization
  of $\sO_X$ in ${f}_*\sO_Y$.  It follows from the assumption $\sI_{W\subseteq
    X}\simeq {f}_*\sI_{F\subseteq Y}$ that $\sO_W^{sn}\simeq
  \factor\sO_X^{sn}.\sI_{W\subseteq X}.$. Now if $W$ is seminormal (in fact it is
  enough if $\sO_W$ is seminormal in ${f}_*\sO_F$), then this implies that
  $\factor\sO_X.\sI_{W\subseteq X}. \simeq \sO_W \simeq \sO_W^{sn}\simeq
  \factor\sO_X^{sn}.\sI_{W\subseteq X}.$ and hence $\sO_X\simeq \sO^{sn}_X$.  Finally
  if $Y$ is seminormal, then this implies that so is $X$.
\end{proof}

\begin{cor}\label{cor:finite-cover-is-ok}\label{cor:finite-cover-to-normalization}
  Let $g:X'\to X$ be a finite surjective morphism between normal varieties. Let
  $Z\subseteq X$ be a reduced (not necessarily normal) subscheme and assume that
  $Z':= g^{-1}(Z)_{\red}$ is DB.  Then so is $Z$.
%
\end{cor}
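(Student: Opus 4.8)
The plan is to apply the criterion \eqref{thm:db-criterion} to $f\leteq g|_{Z'}$, a finite and hence proper morphism $Z'\to Z$, taking $W\leteq\emptyset\subseteq Z$ and $F\leteq f^{-1}(W)=\emptyset\subseteq Z'$. For these choices $\sI_{W\subseteq Z}=\sO_Z$ and $\sI_{F\subseteq Z'}=\sO_{Z'}$, and since $f$ is affine we have $\myR f_*\sO_{Z'}=f_*\sO_{Z'}$; thus the map $\varrho$ of \eqref{thm:db-criterion} is the natural map $\sO_Z\to f_*\sO_{Z'}$. The subschemes $W=F=\emptyset$ are vacuously DB and $Z'$ is DB by hypothesis, so it suffices to produce an $\sO_Z$-linear left inverse $\varrho'$ of $\varrho$; then \eqref{thm:db-criterion} yields that $Z$ is DB.

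To split $\sO_Z\to f_*\sO_{Z'}$ we use the trace. Since DB is a local condition we may assume $X$ is connected, hence, being normal, integral; set $d\leteq\deg g\ge 1$. As $X$ is normal, $\operatorname{Tr}_{X'/X}\colon g_*\sO_{X'}\to\sO_X$ is defined, is $\sO_X$-linear, and carries $1$ to $d$, so $\tau\leteq\tfrac1d\operatorname{Tr}_{X'/X}$ is a left inverse of the natural map $\sO_X\to g_*\sO_{X'}$. The key point is the inclusion
\[
\tau\bigl(g_*\sI_{Z'\subseteq X'}\bigr)\subseteq \sI_{Z\subseteq X}.
\]
Granting it, apply $g_*$ (exact here, as $g$ is affine) to $0\to\sI_{Z'\subseteq X'}\to\sO_{X'}\to\sO_{Z'}\to 0$ and use $g_*\sO_{Z'}=\iota_*f_*\sO_{Z'}$ for the closed immersion $\iota\colon Z\hookrightarrow X$, together with $\sI_{Z\subseteq X}\cdot\sO_{X'}\subseteq\sI_{Z'\subseteq X'}$ (valid because $Z'=g^{-1}(Z)_{\red}$): then $\tau$ descends to a retraction of $\iota_*\varrho\colon\iota_*\sO_Z\to\iota_*f_*\sO_{Z'}$, and since $\iota$ is a closed immersion this retraction is $\iota_*$ of the desired left inverse $\varrho'$ of $\varrho$.

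The hard part is the displayed inclusion. Because $Z'=g^{-1}(Z)_{\red}$ we have $\sI_{Z'\subseteq X'}=\sqrt{\sI_{Z\subseteq X}\cdot\sO_{X'}}$, and the radical ideal $\sI_{Z\subseteq X}$ is the intersection of its minimal primes; testing the inclusion at each such prime and localizing, the statement reduces to the following: \emph{if $A$ is a normal local domain with maximal ideal $\mathfrak m$, $B$ a finite reduced $A$-algebra, and $h\in B$ satisfies $h^{k}\in\mathfrak m B$ for some $k$, then $\operatorname{Tr}_{B/A}(h)\in\mathfrak m$.} By additivity of the trace over the factor fields of $B\otimes_A\operatorname{Frac}(A)$ one reduces to the case that $B$ is a domain with fraction field $L$ a finite extension of $K\leteq\operatorname{Frac}(A)$, separable since $\operatorname{char}K=0$; replacing $B$ by its integral closure in $L$ alters neither $\operatorname{Tr}_{B/A}$ nor the hypothesis, so $B$ may be taken normal, and then $h^{k}\in\mathfrak m B$ forces $h$ into every maximal ideal of the semilocal ring $B$. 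Passing to a finite Galois extension $\widehat K\supseteq L$ of $K$ and letting $\widehat A$ be the integral closure of $A$ in $\widehat K$, the Galois group acts transitively on the finitely many maximal ideals of $\widehat A$; since $h$ lies in each maximal ideal of $B$ it lies in each maximal ideal of $\widehat A$, hence so does every Galois conjugate of $h$, and therefore $\operatorname{Tr}_{B/A}(h)=\operatorname{Tr}_{L/K}(h)$, being a sum of such conjugates, lies in the intersection of all maximal ideals of $\widehat A$. Intersecting with $A$ yields $\operatorname{Tr}_{B/A}(h)\in\mathfrak m$. This Galois-theoretic step is the one genuine obstacle; everything else in the argument is bookkeeping.
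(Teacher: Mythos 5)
Your proof is correct and follows essentially the same route as the paper's: split $\sO_Z\to f_*\sO_{Z'}$ via the normalized trace of $g$, using the key inclusion $\tau\bigl(\sI_{Z'\subseteq X'}\bigr)\subseteq\sI_{Z\subseteq X}$ (where $\sI_{Z'\subseteq X'}=\sqrt{\sI_{Z\subseteq X}\cdot\sO_{X'}}$ and $Z$ is reduced), and then invoke \eqref{thm:db-criterion} with $W=\emptyset$. The only difference is that the paper obtains the trace inclusion by citing \cite[5.14, 5.15]{MR0242802} on elements integral over an ideal, whereas you reprove that fact directly with the localization-plus-Galois-conjugates argument; both are fine.
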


\begin{rem}
  The special case of this statement when $Z=X$ was proved in
  \cite[12.8.2]{Kollar95s} for $X$ projective and in \cite[2.5]{Kovacs99} in general.
\end{rem}

\begin{proof}[Proof \uj{ of \eqref{cor:finite-cover-is-ok}}]
  \ujjj{%
    Let $\tau:g_*\sO_{X'}\to \sO_X$ denote the normalized trace map of $X'\to X$ and
    $\sJ=\sqrt{\sI_{Z\subseteq X}\cdot g_*\sO_{X'}}$. Then it follows from
    \cite[5.14, 5.15]{MR0242802} and the fact that $Z$ is reduced, that
    \begin{equation*}
      \tau(\sJ)\subseteq \sqrt{\sI_{Z\subseteq X}}={\sI_{Z\subseteq X}}.
    \end{equation*}
    Therefore, $\tau$ gives a splitting of} $\o_Z\to f_*\o_{Z'}$. The rest follows
  from \eqref{thm:db-criterion} applied to $Z$ with $W=\emptyset$ \uj{or directly by
    \cite[2.4]{Kovacs99}}.
\end{proof}

\section{Dlt models and 
  twisted higher direct images of dualizing sheaves}

We will frequently use the following statement in order to pass from an lc pair to
its dlt model.  Please recall the definition of a \emph{boundary} and a \emph{minimal
  dlt model} from \eqref{demo:defs-and-not}.

\begin{thm}[(Hacon)] \label{thm:dlt-models} 
  Let $(X,\Delta)$ be a pair such that $X$ is quasi-projective, $\Delta$ is a
  boundary, and $K_X+\Delta$ is a $\bQ$-Cartier divisor.  Then $(X,\Delta)$ admits a
  $\bQ$-factorial minimal dlt model $f^{\rm m}:(X^{\rm m}, \Delta^{\rm m})\to
  (X,\Delta)$.
\end{thm}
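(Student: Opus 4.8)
The plan is to realize $(X^{\rm m},\Delta^{\rm m})$ as the output of a relative Minimal Model Program run on a log resolution of $(X,\Delta)$, and then to read off the defining properties from the negativity lemma. The only substantial input is the termination result of Birkar--Cascini--Hacon--McKernan \cite{BCHM}.

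First I would fix a log resolution $g\colon W\to X$ of $(X,\Delta)$, so that $\Ex(g)\cup\supp\bigl(g^{-1}_*\Delta\bigr)$ has simple normal crossing support. Let $F_1,\dots,F_r$ be the $g$-exceptional prime divisors and $a_i:=a(F_i;X,\Delta)$ their discrepancies, which make sense because $K_X+\Delta$ is $\bQ$-Cartier, and set
$$
\Gamma:=g^{-1}_*\Delta+\sum_{i=1}^{r}F_i .
$$
Since $\Delta$ is a boundary and $g$ is a log resolution, $\Gamma$ is a boundary with simple normal crossing support; hence $(W,\Gamma)$ is log smooth, in particular $\bQ$-factorial and dlt, and the discrepancy formula reads
$$
K_W+\Gamma=g^{*}(K_X+\Delta)+F,\qquad F:=\sum_{i=1}^{r}(1+a_i)F_i ,
$$
with $F$ supported on the $g$-exceptional locus. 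Writing $F=F^{+}-F^{-}$ with $F^{\pm}\ge 0$ having no common component, one has $\supp F^{+}=\bigcup_{a_i>-1}F_i$ and $\supp F^{-}=\bigcup_{a_i<-1}F_i$.

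Next I would run a $(K_W+\Gamma)$-MMP over $X$, with scaling of a $g$-ample $\bQ$-divisor. Since $g$ is birational, none of the extremal contractions that occur can be of fibre type --- otherwise the resulting morphism to $X$ would factor through a variety of dimension $<\dim X$, which is impossible for a birational morphism --- so \emph{once this program terminates} it ends at a relative minimal model $g^{\rm m}\colon(X^{\rm m},\Gamma^{\rm m})\to X$ with $(X^{\rm m},\Gamma^{\rm m})$ $\bQ$-factorial and dlt (both are preserved by the steps of a dlt MMP) and with $K_{X^{\rm m}}+\Gamma^{\rm m}$ nef over $X$. \textbf{This termination is the crux and the step I expect to be the main obstacle:} $(W,\Gamma)$ is dlt but not klt and $K_W+\Gamma$ need not be pseudo-effective over $X$, so \cite{BCHM} does not apply word for word. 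One reduces to it in the standard way, e.g.\ by Fujino-style special termination along $\lfloor\Gamma\rfloor$ (bootstrapping on termination of the MMP in lower dimension), or by perturbing the reduced part of $\Gamma$ to a klt boundary --- in a way that does not change which divisors the relative MMP can contract --- and then using again that no relative Mori fibre space can occur.

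Finally I would verify the conclusion. Put $\Delta^{\rm m}:=(g^{\rm m})^{-1}_*\Delta+\Ex(g^{\rm m})_{\mathrm{red}}$; by construction this coincides with the proper transform $\Gamma^{\rm m}$ of $\Gamma$, so $(X^{\rm m},\Delta^{\rm m})$ is $\bQ$-factorial dlt. Pushing the relation $K_W+\Gamma=g^{*}(K_X+\Delta)+F$ forward along the birational map $W\dashrightarrow X^{\rm m}$ gives
$$
K_{X^{\rm m}}+\Delta^{\rm m}=(g^{\rm m})^{*}(K_X+\Delta)+F^{\rm m},
$$
where $F^{\rm m}$ is the proper transform of $F$. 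As $(g^{\rm m})^{*}(K_X+\Delta)$ is numerically $g^{\rm m}$-trivial, $F^{\rm m}$ is nef over $X$ and $g^{\rm m}$-exceptional, so the negativity lemma forces $F^{\rm m}\le 0$; since the positive and negative parts of $F^{\rm m}$ still have disjoint supports, this means that every $F_i$ with $a_i>-1$ has been contracted. Consequently every $g^{\rm m}$-exceptional divisor is one of the $F_i$ with $a_i\le -1$, i.e.\ has discrepancy at most $-1$ with respect to $(X,\Delta)$ --- which is exactly the requirement in the definition of a minimal dlt model. When $(X,\Delta)$ is lc one has $a_i\ge -1$ for all $i$, so the surviving exceptional divisors have $a_i=-1$, whence $F^{\rm m}=0$ and $K_{X^{\rm m}}+\Delta^{\rm m}\sim_{\bQ}(g^{\rm m})^{*}(K_X+\Delta)$, as recorded in \eqref{demo:defs-and-not}.
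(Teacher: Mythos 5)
Your overall strategy -- log resolution, relative MMP over $X$, then the negativity lemma to show that every exceptional divisor with discrepancy $>-1$ is contracted -- is exactly the paper's strategy, and your final negativity computation is correct. But the step you yourself flag as ``the crux'' is a genuine gap, and it is precisely the step that the paper's entire proof is devoted to. You need a minimal model over $X$ of the dlt, non-klt pair $(W,\Gamma)$, and neither of your two suggested fixes works off the shelf. Special termination only moves the flipping locus away from $\lfloor\Gamma\rfloor$ after finitely many steps (and itself presupposes lower-dimensional termination statements that were not unconditionally available); it does not by itself produce a minimal model of $(W,\Gamma)$. And the naive perturbation of the reduced boundary to $(1-\varepsilon)\lfloor\Gamma\rfloor$ changes the numerical class of the log canonical divisor, hence can change which extremal rays are negative and which divisors the MMP contracts; your parenthetical ``in a way that does not change which divisors the relative MMP can contract'' is exactly the assertion that requires proof.

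The paper's proof (Hacon's argument) is the careful execution of that perturbation. One chooses the log resolution to be a composite of blow-ups so that there is an \emph{effective $f$-exceptional divisor $C$ with $-C$ $f$-ample}, and a sufficiently ample $H$ on $X$; then the identity
$$
E+(1+\nu)F+\mu(-C+f^*H)=(1-\varepsilon\mu)E+(1+\nu)F+\mu(\varepsilon E-C+f^*H)
$$
exhibits the dlt boundary (plus a general member $H_1$ of the ample class $-C+f^*H$) as $\bQ$-linearly equivalent to a \emph{klt} boundary (plus a general member $H_2$ of the ample class $\varepsilon E-C+f^*H$). Because the two log canonical divisors are $\bQ$-linearly equivalent over $X$, they have literally the same MMP, and \cite{BCHMc06} applies to the klt side (the added ample $\mu H_2$ makes the boundary big over $X$). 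This is the content you would need to supply to make your ``perturbation'' alternative rigorous. A second, smaller discrepancy: the paper then applies negativity not to your $F^{\rm m}$ but to the exceptional divisor $\mu C^{\rm m}+(E^+-E)^{\rm m}-\nu F^{\rm m}-B^{\rm m}$, whose anti-nefness comes from comparing the nef perturbed log canonical divisor with the $\bQ$-linearly $f^{\rm m}$-trivial pullback of $K_X+\Delta$; choosing $0<\mu\ll\nu\ll1$ then forces the divisors with discrepancy $>-1$ to be contracted. Your version of this step is fine \emph{conditional on} $K_{X^{\rm m}}+\Gamma^{\rm m}$ itself being nef over $X$, which again is only known once the minimal model of the honest dlt pair has been produced.
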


\begin{proof}
  Let $f:X'\to (X,\Delta)$ be a log resolution that is a composite of blow-ups of
  centers of codimension at least $2$.  Note that then there exists an effective
  $f$-exceptional divisor $C$ such that $-C$ is $f$-ample.
  Let $\Delta'$ be a divisor such that $\Delta'- f^{-1}_*\Delta$ is $f$-exceptional
  and that 
  $$
  K_{X'}+\Delta'\sim_{\bQ} f^*(K_X+\Delta).
  $$ 
  Let
  $\Delta_{<1}=\Delta-\rdown\Delta$ denote the part of $\Delta$ with coefficients
  strictly less than $1$, and write 
  $$
    \Delta' = f^{-1}_*\Delta_{<1} + 
    E^+ + F - B,
  $$
  where 
  $E^+$ denotes the sum of all (not necessarily exceptional) divisors with
  discrepancy $\leq -1$, $F$ the sum of all $f$-exceptional divisors with discrepancy
  $>-1$ and $\leq 0$, and $B$ the sum of all $f$-exceptional divisors with
  discrepancy $>0$.  Let $E\leteq \red E^+$ and notice that all of $E,F$, and $B$ are
  effective, $E^+-E,F$, and $B$ are $f$-exceptional, 
  while $f^{-1}_*\Delta_{<1}+E+F$ contains $f^{-1}_*\Delta$.

  Let $H$ be sufficiently ample on $X$.  Then for all $\varepsilon, \mu, \nu\in \bQ$,
  \begin{equation}
    \label{eq:9}
    E+(1+\nu)F+\mu (-C+ f^*H)
    =
    (1-\varepsilon\mu)E+(1+\nu)F+\mu(\varepsilon E-C+f^*H).
  \end{equation}
  If $0<\varepsilon\ll 1$, then both $-C+ f^*H$ and $\varepsilon E-C+ f^*H$ are
  ample, hence $\bQ$-linearly equivalent to divisors $H_1$ and $H_2$ such that
  $\Delta'+H_1+H_2$ has snc support.  If $0<\mu<1$ and $0<\nu\ll 1$, then, by the
  definition of $E$ and $F$,
  $$
  (X',f^{-1}_*\Delta_{<1} + (1-\varepsilon\mu)E+(1+\nu) F+ \mu H_2)
  $$ 
  is klt and hence by \cite{BCHMc06} it has a ($\bQ$-factorial) minimal model
  $f^{\text m}:(X^{\text m},\Delta^{\text m}_{\varepsilon, \mu,\nu}) \to X$. By
  (\ref{eq:9}) this is also a minimal model of the pair $(X',f^{-1}_*\Delta_{<1} +
  E+(1+\nu)F+ \mu H_1)$, which is therefore dlt.  Let $\Delta^{\text m}$ denote the
  birational transform of $f^{-1}_*\Delta_{<1} + E+F$ on $X^{\text m}$.  Then we
  obtain that $(X^{\text m}, \Delta^{\text m})$ is dlt.

  For a divisor $G\subset X'$ (e.g., $E,F,C,H_i$) appearing above (other than
  $\Delta$) let $G^{\text m}$ denote its birational transform on $X^{\text m}$.
  By construction
  $$
  N\leteq K_{X^{\text m}}+ \Delta^{\text m}+ \nu F^{\text m} +\mu  H^{\text m}_1 
  \sim_{\bQ}
  K_{X^{\text m}}+   \Delta^{\text m}_{\varepsilon, \mu,\nu}
  $$
  is $f^{\text m}$-nef and
  $$
  T\leteq K_{X^{\text m}}+ \Delta^{\text m}+ (E^+-E)^{\text m} -B^{\text m}
  \sim_{\bQ} (f^{\text m})^*(K_{X}+\Delta)
  $$ 
  is $\bQ$-linearly $f^{\text m}$-trivial.
  %
  %
  Let
  $$
  D^{\text m} := 
  \mu C^{\text m} +(E^+-E)^{\text m} -\nu F^{\text m} - B^{\text m}.
  $$
  Then $ - D^{\text m}$ is $\bQ$-linearly $f^{\text m}$-equivalent to the difference
  $N-T$, 
  hence it is $f^{\text m}$-nef. Since $D^{\text m}$ is $f$-exceptional, $f^{\text
    m}_*\bigl(D^{\text m}\bigr) =0$, so $D^{\text m}$ is effective by
  \cite[3.39]{KM98}.  
  Chosing $0<\mu\ll \nu\ll 1$ shows that both $F$ and $B$ \uj{disappear} on $X^{\text
    m}$, so every $f^{\text m}$-exceptional divisor has discrepancy $\leq -1$ and
  hence $(X^{\text m}, \Delta^{\text m})$ is indeed a minimal dlt model of
  $(X,\Delta)$ as defined in \eqref{demo:defs-and-not}.
\end{proof}

\begin{thm}\label{thm:higher-direct-images-split}
  Let $X$ be a smooth variety over $\bC$ and $D=\sum a_iD_i$ an effective, integral
  snc divisor.  Let $\sL$ be a line bundle on $X$ such that $\sL^m\simeq \sO_X(D)$
  for some $m>\max\{a_i\}$.  Let $f:X\to S$ be a projective morphism.  Then the
  sheaves $\myR^if_* \bigl(\omega_X\otimes \sL\bigr)$ are torsion-free for all $i$ and
  $$
  \myR f_* \bigl(\omega_X\otimes \sL\bigr)\simeq \sum_i \myR^if_* \bigl(\omega_X\otimes
  \sL\bigr)[-i].
  $$
\end{thm}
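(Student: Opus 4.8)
The plan is to reduce both assertions to the theory of variations of Hodge structure attached to cyclic covers, in the spirit of Esnault--Viehweg. First I would construct the cyclic cover $\pi:\wt X\to X$ determined by the relation $\sL^m\simeq\sO_X(D)$; since $D$ is snc and $m>\max\{a_i\}$, after a further blow-up one may assume the normalization $\wt X$ (or rather a log resolution of it) is smooth, and the classical computation expresses $\pi_*\omega_{\wt X}$ as a direct sum $\bigoplus_{j=0}^{m-1}\omega_X\otimes\sL^{(j)}$ where $\sL^{(j)}=\sL^j(-\rdown{jD/m})$; in particular $\omega_X\otimes\sL$ appears as the $j=1$ summand (here the hypothesis $m>\max\{a_i\}$ guarantees $\rdown{D/m}=0$, so the summand is exactly $\omega_X\otimes\sL$). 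Thus it suffices to prove the torsion-freeness and the decomposition for $\myR(f\circ\pi)_*\omega_{\wt X}$, or more precisely for the relevant Hodge-theoretic summand of it, and then extract the isotypical piece under the $\bZ/m$-action, which is compatible with all the direct image functors.

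The key input is then the Kollár–type package for the canonical sheaf: for a projective morphism $g:\wt X\to S$ with $\wt X$ smooth, $\myR^ig_*\omega_{\wt X}$ is torsion-free, and $\myR g_*\omega_{\wt X}\simeq\bigoplus_i\myR^ig_*\omega_{\wt X}[-i]$ in $D(S)$. I would invoke the version of this statement that applies to $\omega_X\otimes\sL$ directly — this is exactly the setting treated by Esnault–Viehweg and by Kollár's papers on higher direct images, where one works with the logarithmic de Rham complex of the pair $(X,D)$ twisted by $\sL^{-1}$ and uses the $E_1$-degeneration of the associated Hodge-to-de Rham spectral sequence together with the strictness of the Hodge filtration. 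Concretely: the torsion-freeness follows from the fact that $\omega_X\otimes\sL$ is a direct summand (the $\sL^{(1)}$-isotypical component) of $\pi_*\omega_{\wt X}$, hence $\myR^if_*(\omega_X\otimes\sL)$ is a direct summand of the torsion-free sheaf $\myR^i(f\circ\pi)_*\omega_{\wt X}$; and the decomposition follows because the Kollár decomposition for $\myR(f\circ\pi)_*\omega_{\wt X}$ is $\bZ/m$-equivariant (it is canonical), so it restricts to a decomposition of each isotypical summand.

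The main obstacle I anticipate is the resolution bookkeeping: $\wt X$ as defined by the cyclic cover construction is only normal with quotient singularities along the preimage of $D$, so one must pass to a resolution $\rho:\wh X\to\wt X$ and check that $\rho_*\omega_{\wh X}\simeq\omega_{\wt X}$ (rationality of the singularities, or a direct local computation since they are abelian quotient singularities), and that the $\bZ/m$-action lifts. One then replaces $g$ by $f\circ\pi\circ\rho$ and applies the smooth case. A secondary technical point is verifying that the decomposition produced by the smooth Kollár theorem is genuinely equivariant; since it is obtained canonically from the perverse/Hodge-theoretic structure — not by an arbitrary choice of splitting — this is automatic, but it should be stated carefully. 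Once these points are in place, extracting the $j=1$ summand gives both claims simultaneously, so no separate argument for the decomposition versus torsion-freeness is needed.
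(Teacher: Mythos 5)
Your proposal is correct and matches the paper's own argument essentially verbatim: the paper also takes the degree-$m$ cyclic cover $\pi:Y\to X$ determined by $\sL^m\simeq\sO_X(D)$, passes to a $\mu_m$-equivariant resolution $Y'\to Y$ of its rational singularities with $\myR h_*\omega_{Y'}\simeq\omega_Y$, applies Koll\'ar's torsion-freeness and splitting theorems to $\omega_{Y'}$ over $S$, and extracts the $\mu_m$-eigensubsheaf $\omega_X\otimes\sL$ of $\pi_*\omega_Y$ (using $m>\max\{a_i\}$ exactly as you indicate). The technical points you flag --- the equivariance of the resolution and of the decomposition --- are precisely the ones the paper addresses.
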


\begin{proof}
  If $D=0$, this is \cite[2.1]{Kollar86a} and \cite[3.1]{Kollar86b}.  The general
  case can be reduced to this as follows.  The isomorphism $\sL^m\simeq \o_X(D)$
  determines a degree $m$-cyclic cover $\pi:Y\to X$ with a $\mu_m$-action and
  $\omega_X\otimes \sL$ is a $\mu_m$-eigensubsheaf of $\pi_*\omega_Y$.  In general
  $Y$ has rational singularities. Let $h:Y'\to Y$ be a $\mu_m$-equivariant resolution
  and $g:Y'\to S$ the composition $f\circ \pi\circ h$. Then $\myR h_* \omega_{Y'}\simeq
  \omega_Y$, thus
  \begin{multline*}
    \myR f_*{\left(\pi_*\omega_Y\right)}{\simeq \myR f_* \myR \pi_*\omega_Y} \simeq
    \myR f_*\myR\pi_*\myR h_*\omega_{Y'}\simeq \\
    \simeq \myR g_* \omega_{Y'}\simeq \sum_i {\myR^ig_*\omega_{Y'}[-i]} {\simeq
      \uj{\sum_i} \myR^if_*\pi_* h_*\omega_{Y'}}[-i] \simeq \sum_i
    \myR^if_*\left(\pi_*\omega_{Y}\right)[-i]
  \end{multline*}
  by \cite[3.1]{Kollar86b} and because $\pi$ is finite.  As all of these isomorphisms
  are $\mu_m$-equivariant, taking $\mu_m$-eigensubsheaves on both sides, we obtain
  the desired statement. Notice that in particular we have proven that $\myR^if_*
  \bigl(\omega_X\otimes \sL\bigr)$ is a subsheaf of $\myR^ig_*\omega_{Y'}$ which is
  torsion-free by \cite[2.1]{Kollar86a}.
\end{proof}

\section{Splitting over the non-klt locus}

In the following theorem we show that the DB criterion \eqref{thm:db-criterion} holds
in an important situation.

\begin{thm}\label{splitting.thm}
  Let $f:{{Y}}\to {{X}}$ be a proper morphism with connected fibers between normal
  varieties. Assume that $({{Y}},\Delta)$ is lc and $K_{{Y}}+\Delta\sim_{\q,f} 0$.
%
  Set $W:=f(\nklt({{Y}},\Delta))$ and assume that $W\neq {{X}}$. \ujv {Let} $\pi:\wt
  {{Y}}\to {{Y}}$ \ujv{be a proper birational morphism and $F\leteq \wt
    f^{-1}(W)_{\ujv{\red}}$, where $\wt f\leteq f\circ \pi$}.  Then the natural map
  $$
  \varrho: \sI_W\simeq \wt f_*\sO_{{\wt Y}}(-F)\to \myR\wt f_*\sO_{{\wt Y}}(-F)
  $$
  has a left inverse. 
\end{thm}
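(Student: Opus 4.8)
The plan is to reduce to a situation where Theorem~\ref{thm:higher-direct-images-split} applies and then produce the splitting by a trace/projection argument. First I would observe that it suffices to treat the case where $\pi$ is an isomorphism, i.e.\ to split $\varrho:\sI_W\to \myR f_*\sO_Y(-F_0)$ with $F_0=f^{-1}(W)_{\red}$; indeed, for a general $\pi$ one has a factorization $\wt f_*\sO_{\wt Y}(-F)\cong f_*\sO_Y(-F_0)$ (since $\pi_*\sO_{\wt Y}(-F)=\sO_Y(-F_0)$ when $F$ is the reduced preimage of $W$, as $F_0$ is reduced) and compatible maps, so a left inverse in the bottom case pulls back. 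Actually, since $W=f(\nklt(Y,\Delta))$ and $K_Y+\Delta\sim_{\q,f}0$, I would pass to a minimal dlt model via \eqref{thm:dlt-models} and a log resolution, reducing to the case where $(Y,\Delta)$ is dlt (even log smooth) and $\nklt(Y,\Delta)=\lfloor\Delta\rfloor$, so $F$ becomes a reduced snc divisor.

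The core step is the following. Write $B=\lceil\Delta\rceil-\Delta\geq 0$ (the fractional part, with a sign), so that $\lceil\Delta\rceil = \Delta + B$ and $K_Y+\Delta+B = K_Y+\lceil\Delta\rceil$ differs from $K_Y$ by the integral divisor $\lceil\Delta\rceil$. Since $K_Y+\Delta\sim_{\q,f}0$, we get $K_Y+\lceil\Delta\rceil\sim_{\q,f} B$, and $\lfloor\Delta\rfloor = \lceil\Delta\rceil - (\text{reduced support of fractional part})$; more usefully, $\sO_Y(-\lfloor\Delta\rfloor)\otimes\omega_Y(\lceil\Delta\rceil) \sim_{\q,f}$ a $\bQ$-divisor supported on the fractional part. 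The point is to realize $\sO_Y(-F)$ — where $F=\lfloor\Delta\rfloor$ is the reduced non-klt locus divisor — as (a $\mu_m$-eigensheaf of a pushforward related to) $\omega_Y\otimes\sL$ for a suitable line bundle $\sL$ with $\sL^m=\sO_Y(\text{integral snc})$, so that Theorem~\ref{thm:higher-direct-images-split} yields the decomposition $\myR f_*\sO_Y(-F)\simeq\bigoplus_i \myR^i f_*\sO_Y(-F)[-i]$ together with torsion-freeness of each $\myR^i f_*\sO_Y(-F)$. Granting this, the degree-zero summand is $f_*\sO_Y(-F)=\sI_W$, and the inclusion $\sI_W\hookrightarrow\myR f_*\sO_Y(-F)$ of this direct summand is split by the projection onto it; the higher terms $\myR^i f_*\sO_Y(-F)[-i]$ for $i>0$ contribute nothing in degree $0$ and so the projection $\varrho'$ is the desired left inverse. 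The identification $f_*\sO_Y(-F)\cong\sI_W$ itself follows because $F=f^{-1}(W)_{\red}$ and $f$ has connected fibers, so $f_*\sO_Y=\sO_X$ and the ideal of $W$ pulls back to (the reduced structure on) $F$; the connectedness/normality hypotheses ensure no extra sections sneak in.

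The main obstacle I anticipate is the passage to the dlt/log-smooth model while keeping track of the sheaf $\sO_Y(-F)$ and the map $\varrho$ \emph{functorially} — one must check that the splitting constructed upstairs descends, using that $\myR\pi_*$ of the relevant sheaf is again the relevant sheaf (a Kawamata--Viehweg / Grauert--Riemenschneider type vanishing for the dlt model) and that $\nklt$ and $W$ are preserved under $f^{\rm m}$ since $K_{X^{\rm m}}+\Delta^{\rm m}\sim_{\q}(f^{\rm m})^*(K_X+\Delta)$. A secondary technical point is arranging the cyclic cover so that $\sO_Y(-F)$ is exactly the $\mu_m$-eigensheaf $\omega_Y\otimes\sL$ appearing in \eqref{thm:higher-direct-images-split}; this requires choosing $m$ and the boundary data so that $m\,\Delta$ is integral and $m>\max\{a_i\}$ for the coefficients $a_i$ of the relevant divisor $D$ with $\sL^m=\sO_X(D)$, which is exactly the hypothesis of that theorem and is where the condition $W\neq X$ gets used (to ensure $D$, hence everything, is effective and the fractional contributions are genuinely fractional rather than forcing $F$ into the picture incorrectly). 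Once these bookkeeping issues are handled, the splitting is immediate from the decomposition theorem.
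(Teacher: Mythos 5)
There is a genuine gap, in two places. First, your opening reduction goes the wrong way. The natural comparison map runs $\myR f_*\sO_Y(-F_0)\to \myR\wt f_*\sO_{\wt Y}(-F)$ (push forward the map $\sO_Y(-F_0)=\pi_*\sO_{\wt Y}(-F)\to\myR\pi_*\sO_{\wt Y}(-F)$); there is no natural map in the other direction, so a left inverse of $\sI_W\to\myR f_*\sO_Y(-F_0)$ does not produce a left inverse of $\varrho$ for a nontrivial $\pi$. The usable reduction is the opposite one: if the splitting holds after composing $\pi$ with a further blow-up $\tau$, then it holds for $\pi$, because $\what\varrho=\myR\wt f_*(\ol\varrho)\circ\varrho$ and one can precompose the upstairs left inverse with $\myR\wt f_*(\ol\varrho)$. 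So you may pass to a log resolution, but you may not pass down to $\pi=\id$, and you cannot simply replace $(Y,\Delta)$ by a dlt model, since the statement is about the given $Y$ and the given $\pi$.

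Second, and more seriously, your core step misidentifies $F$. By hypothesis $F=\wt f^{-1}(W)_{\red}$ with $W=f(\nklt(Y,\Delta))$; this contains every divisor mapping into $W$ (e.g.\ whole fibers over $W$), not just the non-klt places, so $F\neq\rdown{\Delta}$ and $-F$ is \emph{not} relatively of the form $K+\{\text{boundary with coefficients}<1\}$: writing $\pi^*(K_Y+\Delta)\simq K_{\wt Y}+E+\wt\Delta-B$, one has $-F\sim_{\q,\wt f}K_{\wt Y}+\wt\Delta+(E-B-F)$ with $E-B-F$ not effective. Hence Theorem~\ref{thm:higher-direct-images-split} does not apply to $\sO_{\wt Y}(-F)$, and whether $\myR\wt f_*\sO_{\wt Y}(-F)$ decomposes at all is exactly the non-klt decomposition that is not known (these sheaves come from variations of mixed Hodge structures). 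The argument that actually works interposes the map coming from the inequality $B-E\geq -F$: one composes
$$
\wt f_*\sO_{\wt Y}(-F)\to\myR\wt f_*\sO_{\wt Y}(-F)\to\myR\wt f_*\sO_{\wt Y}(B-E),
$$
applies the decomposition of Theorem~\ref{thm:higher-direct-images-split} to the \emph{target}, where $B-E\sim_{\q,\wt f}K_{\wt Y}+\wt\Delta$ with $\rdown{\wt\Delta}=0$, projects onto the $0^{\text{th}}$ summand $\wt f_*\sO_{\wt Y}(B-E)$, and then closes the loop by showing $\wt f_*\sO_{\wt Y}(-F)=f_*\sI_{\nklt(Y,\Delta)}=\wt f_*\sO_{\wt Y}(B-E)\simeq\sI_W$, using that a section vanishes along a connected fiber as soon as it vanishes at one of its points. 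This comparison step and the final identification of the two $H^0$ pushforwards are the heart of the proof, and they are absent from your proposal precisely because you assumed $F=\rdown{\Delta}$.
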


\begin{proof}
  First, observe that if $\tau: \what Y\to \wt Y$ is a log resolution of $(\wt Y,F)$
  that factors through $\pi$, then it is enough to prove the statement for
  $\sigma=\pi\circ\tau$ instead of $\pi$. Indeed let $\what F=\tau^{-1}F$, an snc
  divisor, and $\what f=f\circ\sigma$.  Suppose that the natural map
  $$
  \what\varrho:\sI_W\simeq \what f_*\sO_{{\what Y}}(-\what F)\to \myR\what
  f_*\sO_{{\what Y}}(-\what F)
  $$
  has a left inverse, $\what\delta: \myR\what f_*\sO_{{\what Y}}(-\what F)\to \what
  f_*\sO_{{\what Y}}(-\what F)$ such that $\what\delta\circ \what\varrho =
  \id_{\sI_W}$.  Then, as $\what f=\wt f\circ \tau$, one has that $\myR\what
  f_*\sO_{{\what Y}}(-\what F)\simeq \myR\wt f_*\myR\tau_*\sO_{{\what Y}}(-\what F)$ and
  applying the functor $\myR\wt f_*$ to the natural map $\ol\varrho:\sO_{\wt
    Y}(-F)\simeq \tau_*\sO_{\what Y}(-\what F) \to \myR\tau_*\sO_{\what Y}(-\what F)$
  shows that $\what\varrho=\myR\wt f_*(\ol\varrho)\circ \varrho$ :
  $$
  \xymatrix{%
    \what\varrho: \sI_W\simeq \wt f_*\sO_{{\wt Y}}(-F) \ar[rr]^-\varrho && \myR\wt
    f_*\sO_{{\wt Y}}(-F) \ar[rr]^{\myR\wt f_*(\ol\varrho)} && \myR\what f_*\sO_{{\what
        Y}}(-\what F).  }
  $$
  Therefore, $\delta=\what\delta\circ \myR\wt f_*(\ol\varrho)$ is a left inverse to
  $\varrho$ showing that it is indeed enough to prove the statement for $\sigma$.  In
  particular, we may replace $\pi$ with its \ujjj{composition} with any further blow
  up. We will use this observation throughout the proof.

  Next write
  $$
  \pi^*( K_{{Y}}+\Delta)\simq K_{\wt {{Y}}}+E+\wt\Delta-B,
  $$ 
  where $E$ is the sum of all (not necessarily exceptional) divisors with discrepancy
  $-1$,
  $B$ is an effective exceptional integral divisor, and $\rdown{\wt \Delta}=0$.  We
  may assume that $\wt f^{-1}\wt f(E)$ is an snc divisor.  Since $B-E\geq -F$, we
  have natural maps
  $$
  \wt f_*\sO_{\wt {{Y}}}(-F)\to \myR\wt f_*\sO_{\wt {{Y}}}(-F)\to \myR\wt f_*\sO_{\wt
    {{Y}}}(B-E).
  $$
  Note that $B-E\sim_{\q,\wt f}K_{\wt {{Y}}}+\wt\Delta$, hence by
  \eqref{thm:higher-direct-images-split}
  $$
  \myR\wt f_*\sO_{\wt {{Y}}}(B-E)\cong \sum_i \myR^{i}\wt f_*\sO_{\wt {{Y}}}(B-E)[-i].
  $$
  Thus we get a morphism
  $$
  \wt f_*\sO_{\wt {{Y}}}(-F)\to \myR\wt f_*\sO_{\wt {{Y}}}(-F) \to \myR\wt f_*\sO_{\wt
    {{Y}}}(B-E)\to \wt f_*\sO_{\wt {{Y}}}(B-E).
  $$
  Note that $\pi_*\sO_{\wt {{Y}}}(B-E)=\sI_{\nklt({{Y}},\Delta)}$. Furthermore, for
  any $U\subseteq {{X}}$ open subset with preimage $U_{{Y}}:=f^{-1}(U)$, a global
  section of $\sO_{U_{{Y}}}$ vanishes along a fiber of $f$ if and only if it vanishes
  at one point of that fiber. Thus

  \hfill $ \wt f_*\sO_{\wt {{Y}}}(-F)=f_*\sI_{\nklt({{Y}},\Delta)}=\wt f_*\sO_{\wt
    {{Y}}}(B-E).  $\hfill
\end{proof}

\section{Log canonical centers}

We need the following higher dimensional version of a result of Shokurov
\cite[12.3.1]{K+92}, cf.\ \cite{MR1756108}.

\begin{prop} \label{2comp->plt.prop}%
  Let $f:{{Y}}\to {{X}}$ be a proper morphism with connected fibers between normal
  varieties. Assume that $({{Y}},\Delta)$ is lc and $K_{{Y}}+\Delta\sim_{\q,f} 0$.
  \ujj{For an arbitrary $x\in {{X}}$ let $U$ denote an \'etale local neighbourhood of
    $x\in X$. Then $f^{-1}(U)\cap \nklt({{Y}},\Delta)$} is either
  \begin{enumerate}
  \item connected, or
  \item has 2 connected components, both of which dominate \ujj{${{U}}$} and
    $({{Y}},\Delta)$ is plt near $f^{-1}(x)$.
  \end{enumerate}
\end{prop}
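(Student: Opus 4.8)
The plan is to reduce to a dlt model and then to split according to whether $\nklt({{Y}},\Delta)$ dominates ${{X}}$. Since the conclusion is local on ${{X}}$ for the étale topology, I would first replace ${{X}}$ by a small affine étale neighbourhood of $x$, so that $f^{-1}(x)$ is connected, and (after a projective birational modification of ${{Y}}$ given by Chow's lemma followed by normalization, which alters neither the number of connected components of the non-klt locus over $x$ nor plt-ness near $f^{-1}(x)$) assume ${{Y}}$ is quasi-projective. Then, since $\Delta$ is automatically a boundary, Hacon's Theorem~\eqref{thm:dlt-models} lets me pass to a $\bQ$-factorial minimal dlt model $f^{\rm m}:(Y^{\rm m},\Delta^{\rm m})\to({{Y}},\Delta)$, which is crepant, has connected fibres, satisfies $\nklt({{Y}},\Delta)=f^{\rm m}(\nklt(Y^{\rm m},\Delta^{\rm m}))$, and for which plt-ness near $f^{-1}(x)$ is equivalent to $f^{\rm m}$ being an isomorphism there; so it is enough to treat $f\circ f^{\rm m}$, i.e.\ we may assume $({{Y}},\Delta)$ is $\bQ$-factorial dlt. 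Then $S:=\nklt({{Y}},\Delta)=\rdown{\Delta}$ is a reduced divisor, and writing $\Delta=S+\Delta'$ with $\rdown{\Delta'}=0$ we have $-S\sim_{\bQ,f}K_{{Y}}+\Delta'$ with $({{Y}},\Delta')$ klt.

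Suppose first that $S$ does \emph{not} dominate ${{X}}$, i.e.\ $W:=f(S)\subsetneq {{X}}$. Applying $f_*$ to $0\to\sO_{{Y}}(-S)\to\sO_{{Y}}\to\sO_S\to 0$ and using $f_*\sO_{{Y}}=\sO_{{X}}$ (Stein factorisation and normality of ${{X}}$) together with $f_*\sO_{{Y}}(-S)=f_*\sI_S=\sI_W$ — which is exactly what the proof of Theorem~\eqref{splitting.thm} establishes in this situation (taking $\pi=\id$, via: a section of $\sO_{{Y}}$ over $f^{-1}(V)$ vanishing at one point of a fibre vanishes along that fibre) — one gets an inclusion $f_*\sO_S/\sO_W\hookrightarrow \myR^1f_*\sO_{{Y}}(-S)$. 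Since $-S\sim_{\bQ,f}K_{{Y}}+\Delta'$ with $\Delta'$ fractional, Theorem~\eqref{thm:higher-direct-images-split} (applied on a log resolution of $({{Y}},S)$ and tracking the exceptional divisor as in the proof of \eqref{splitting.thm}) shows $\myR^1f_*\sO_{{Y}}(-S)$ is torsion free; as $f_*\sO_S/\sO_W$ is supported on the proper closed subset $W$ it vanishes, so $f_*\sO_S=\sO_W$. Hence the Stein factorisation of $S\to W$ is trivial, $S\to W$ has connected fibres, and $f^{-1}(U)\cap\nklt({{Y}},\Delta)$ is connected: case (1).

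Now suppose $S$ dominates ${{X}}$. If $f^{-1}(U)\cap S$ is connected we are again in case (1); so assume it is disconnected. The claim is then that it has exactly two connected components, both dominating $U$, and that $({{Y}},\Delta)$ is plt near $f^{-1}(x)$. The main input is the corresponding \emph{absolute} statement for the proper lc pair $({{Y}}_\eta,\Delta_\eta)$ over $\kappa(\eta)$ with $K_{{{Y}}_\eta}+\Delta_\eta\sim_{\bQ}0$: its non-klt locus is connected or has exactly two connected components, each an irreducible divisor, and in the latter case $({{Y}}_\eta,\Delta_\eta)$ is plt. This is Shokurov's result~\cite[12.3.1]{K+92} when $\dim {{Y}}_\eta=1$, and in general one proves it by induction on $\dim {{Y}}_\eta$: restrict (by adjunction) to a component of $\rdown{\Delta_\eta}$, where the log-Calabi--Yau condition $K+\Delta\sim_{\bQ}0$ persists, and use the connectedness theorem~\cite[17.4]{K+92} (equivalently \eqref{lc.cent.thm}) to exclude a third component of $\rdown{\Delta_\eta}$ and any lc centre of positive codimension inside it; cf.~\cite{MR1756108}. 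Combining this with the connectedness theorem applied near $f^{-1}(x)$ itself — to rule out a superfluous ($f$-vertical or $f$-horizontal) extra component — and with openness of the plt locus, one gets that $f^{-1}(U)\cap\nklt({{Y}},\Delta)$ has two components, both dominating $U$, and $({{Y}},\Delta)$ is plt near $f^{-1}(x)$: case (2).

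The technical heart, and the step I expect to be the main obstacle, is precisely this disconnected subcase. The classical connectedness theorem~\cite[17.4]{K+92} requires $-(K_{{Y}}+\Delta)$ to be $f$-big, whereas here it is only $f$-numerically trivial, which is exactly why two components can occur and why one needs the sharper ``connected or exactly two'' statement for $K+\Delta\sim_{\bQ}0$. Moreover, the plt conclusion in case (2) forces one to show that $\rdown{\Delta}$ is \emph{normal} — equivalently that the two components are disjoint and irreducible — which amounts to excluding any codimension-$\ge 2$ lc centre; arranging the induction via adjunction so that at every stage the log-Calabi--Yau condition and the configuration of lc centres are preserved, and ruling out a third component, is where the real work lies.
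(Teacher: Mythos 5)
Your reduction to a $\bQ$-factorial dlt model and your handling of the case where the non-klt locus fails to dominate $X$ agree with the paper (same push-forward of $0\to\sO_Y(-E)\to\sO_Y\to\sO_E\to 0$, same torsion-freeness input). But two genuine gaps remain. The first is that your case division leaks: ``$S$ dominates $X$'' only says that \emph{some} connected component of $S=\rdown{\Delta}$ dominates, so a configuration with one $f$-horizontal and one $f$-vertical component falls into your second case, where the generic fibre cannot see the vertical component at all. Your only tool for killing it there is ``the connectedness theorem applied near $f^{-1}(x)$,'' which --- as you yourself note --- is unavailable because $-(K_Y+\Delta)$ is merely $f$-numerically trivial, not $f$-big. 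The paper excludes precisely this mixed case with the torsion-freeness argument that you reserve for the all-vertical case, applied \emph{componentwise}: writing $E=\sum E_i$ for the decomposition into connected components, if $E_1$ does not dominate then $f_*\sO_{E_1}\to \myR^1f_*\sO_Y(-E)$ vanishes, and comparing $f_*\sO_{E_1}$ with the one-generator image of $\sO_X\to\sum_i f_*\sO_{E_i}$ forces $E=E_1$; in particular no dominating component can coexist with a non-dominating one.

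The second gap is that your ``main input'' --- the absolute statement for the proper log Calabi--Yau pair $(Y_\eta,\Delta_\eta)$ --- is essentially the proposition itself over a point, and your sketch of its proof does not close. Once the components of $\rdown{\Delta_\eta}$ are disjoint, adjunction to one of them carries no information about the other, so the proposed induction cannot bound their number, and the appeal to \cite[17.4]{K+92} to exclude a third component founders on the same bigness hypothesis as before. This is exactly where the paper invokes the MMP: run the $(Y,(1-\varepsilon)E+\Delta')$-MMP (each step is $K_Y+\Delta$-trivial, so the components stay disjoint) until a Fano contraction $\gamma:Y^*\to S$ appears on which $E^*$ is relatively ample. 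Disjointness of relatively ample components forces one-dimensional fibres; the generic fibre is $\bP^1$ and $(K_{Y^*}+E^*+\Delta^*)\cdot F=0$ gives $E^*\cdot F\le 2$, so there are exactly two components and they are sections; $\gamma$ is generically a $\bP^1$-bundle, pltness reduces to $(S,\Delta_S)$ being klt, and that is supplied by \cite[20.3.3]{K+92} applied to a general section. Without this step (or an equivalent global argument) the ``exactly two components and plt'' half of the statement remains unproved.
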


\begin{proof} 
  We may replace $(Y,\Delta)$ by a $\q$-factorial dlt model by \eqref{thm:dlt-models}
  \ujj{and assume that $X=U$.}
  Then we may also assume that $\nklt({{Y}},\Delta)$ and $f^{-1}(x)\cap
  \nklt({{Y}},\Delta)$ have the same number of connected components.

  Write $\Delta=E+\Delta'$ where $E=\nklt({{Y}},\Delta)=\rdown{\Delta}$ and
  $({{Y}},\Delta')$ is klt. Let $E=\sum E_i$ be the decomposition to a sum of the
  connected components.  Pushing forward
  $$
  0\to \o_{{Y}}(-E)\to \o_{{Y}}\to \o_E\to 0
  $$
  we obtain
  $$
  0\to f_*\o_{{Y}}(-E)\to \o_{{X}}\to \sum_if_*\o_{E_i}\to \myR^1f_*\o_{{Y}}(-E).
  $$
  Note that $-E\sim_{\q,f}K_{{Y}}+\Delta'$, hence $\myR^1f_*\o_{{Y}}(-E)$ is torsion
  free by \eqref{thm:higher-direct-images-split} and applying \cite[2.54]{Fujinobook}
  (cf.\ \cite[3.2]{MR1993751}) to a resolution of the dlt pair $(Y,\Delta)$.

  Suppose $E_1$ does not dominate ${{X}}$. Then $f_*\o_{E_1}$ is a nonzero torsion
  sheaf, hence the induced map $f_*\o_{E_1}\to \myR^1f_*\o_{{Y}}(-E)$ must be zero.
  This implies that 
  $$
  f_*\sO_{E_1}\subseteq \im\bigl[\sO_X\to \sum_{i} f_*\sO_{E_i}\bigr].
  $$ %
  \ujvj{Since we are working locally near $x\in X$, we may assume that
    $(f_*\sO_{E_1})_x\neq 0$.} %
  Observe that the natural projection map $\sum_{i} f_*\sO_{E_i}\to f_*\sO_{E_1}$
  gives a splitting of \uj{the above} embedding.  Further observe, that
  $\im\bigl[\sO_X\to \sum_{i} f_*\sO_{E_i}\bigr]$ has only one generator near $x$.
  \uj{This implies that we must have that} $f_*\sO_{E_1}= \im\bigl[\sO_X\to \sum_{i}
  f_*\sO_{E_i}\bigr]$ \uj{ locally near $x$}.  In particular, there is at most one
  $E_i$ that does not dominate ${{X}}$. Furthermore, if $E_j$ does dominate ${{X}}$
  then $\o_{{X}}\to f_*\o_{E_j}$ is nonzero.  This again would contradict
  $f_*\sO_{E_1}= \im\bigl[\sO_X\to \sum_{i} f_*\sO_{E_i}\bigr]$.  Therefore, if $E$
  has more than one component, then they all dominate ${{X}}$.

  Until now the statement and the proof could have been done birationally, but for
  the rest we use the MMP repeatedly.  Note that the proof is a bit messier than
  \cite[12.3.1]{K+92} since we do not have the full termination of MMP.
 
  First we run the $({{Y}},(1-\varepsilon) E+\Delta')$-MMP cf.~\cite[1.3.2]{BCHMc06}.
  Every step is numerically $K_{{Y}}+\Delta$-trivial, hence, by the usual
  connectedness (cf.\ \cite[5.48]{KM98}) the $E_i$ stay disjoint.  At some point, we
  must encounter a Fano-contraction $\gamma:({{Y}}^*,(1-\varepsilon)E^*+\Delta^*)\to
  S$ where $E^*$ is ample on the general fiber.  As we established above, every
  connected component of $E^*$ dominates $S$. We may assume that $E^*$ is
  disconnected as otherwise we are done.  

  Since the relative Picard number \ujjj{of $Y$} is \ujjj{$1$}, every connected
  component of $E^*$ is relatively ample. As $E^*$ is disconnected, all fibers are
  $1$-dimensional. Since $\gamma$ is a Fano-contraction, the generic fiber is $\bP^1$
  and so $E^*$ can have at most, and hence exactly, two connected components, $E^*_1$
  and $E^*_2$. Since the fibration is numerically $K_{{Y^*}}+\Delta^*$-trivial, it
  follows, that the intersection product of either $E^*_i$ with any fiber is $1$. In
  other words, the $E^*_i$ are sections of $\gamma$. Since they are also relatively
  ample, it follows that every fiber is irreducible and so outside a codimension $2$
  set on the base, $\gamma:Y^*\to S$ is a $\bP^1$-bundle with two disjoint sections.
  It also follows that $\Delta^*$ does not intersect the general fiber, hence
  $\Delta^*=\gamma^*\Delta_S$ for some $\Delta_S\subset S$ and then since the $E^*_i$
  are sections we have that $(E^*_i, \Delta^*\resto{E^*_i})\simeq (S, \Delta_S)$.

  We need to prove that $(Y^*, E^*_1+E^*_2+\Delta^*)$ is plt and for that it is
  enough to show that $(E^*_i, \Delta^*\resto{E^*_i})$ is klt for $i=1,2$. By the
  %
  %
  above observation, all we need to prove then is that $(S, \Delta_S)$ is klt.  Since
  $\gamma$ is a $\bP^1$-bundle (in codimension $1$) with $2$ disjoint sections, we
  have that $K_{Y^*}+E^*_1+E^*_2\sim \gamma^*K_S$ and then that
  $K_{Y^*}+E^*_1+E^*_2+\Delta^*\sim_{\bQ} \gamma^*(K_S+\Delta_S)$.
  Now we may apply \cite[20.3.3]{K+92} to a general section of $Y^*$ mapping to $S$
  to get that $(S, \Delta_S)$ is klt.
\end{proof}

\noindent
We are now ready to prove our main connectivity theorem.

\begin{newnum}{\it Proof of \eqref{lc.cent.thm}}
  We may assume that $f$ is surjective and replace $(Y,\Delta)$ by a $\q$-factorial
  dlt model by \eqref{thm:dlt-models}.  If $Z_1=Y$ then $Z_2\subseteq Z_1$, and if
  $f(Z_2)=X$ then $Z_1\subseteq Z_1$ satisfy the requirement, hence we may assume
  that $(Y,\Delta)$ is dlt, $Z_1, Z_2\subset \rdown{\Delta}$ are divisors, and $Z_2$
  is disjoint from the generic fiber of $f$.  Then, by localizing at a generic point
  of $f(Z_1)\cap f(Z_2)$ we reduce to the case when $x:=f(Z_1)\cap f(Z_2)$ is a
  closed point.

  By working in a suitable \'etale neighborhood of $x$, we may also assume that
  $Z_1\cap f^{-1}(x)$ is geometrically connected.  Thus it is sufficient to prove
  that $Z_1\cap f^{-1}(x)$ contains an lc center.

  Since we are now assuming that $Z_2$ does not dominate $X$, it follows from
  (\ref{2comp->plt.prop}) that $f^{-1}(x)\cap \rdown{\Delta}$ is connected, and hence
  there are irreducible divisors
  $$
  V_1:=Z_2, V_2,\dots, V_{m-1}, V_m:=Z_1\qtq{with} V_i\subset \rdown{\Delta}
  $$
  such that $f^{-1}(x)\cap V_i\cap V_{i+1}\neq \emptyset$ for $i=1,\dots, m-1$.  By
  working in the \'etale topology on $X$, we may also assume that each $f^{-1}(x)\cap
  V_i$ is connected.

  Next, we prove by induction on $i$ that 
  \begin{equation}
    \label{eq:3}
    W_i:= V_i\cap \bigcap_{j<i}f^{-1}\bigl(f(V_j)\bigr) 
  \end{equation}
  contains an lc center of $(Y,\Delta)$.

  For $i=1$ the statement of (\ref{eq:3}) follows from the fact that $V_1=Z_2$ is an
  lc center of $(Y,\Delta)$.  Next we go from $i$ to $i+1$.  Consider $\bigl(V_i,
  \diff_{V_i}(\Delta-V_i)\bigr)$.  Note that every irreducible component of $V_i\cap
  V_{i+1}$ is an lc center of $\bigl(V_i, \diff_{V_i}(\Delta-V_i)\bigr)$ and by
  induction and adjunction $W_i$ contains an lc center of $\bigl(V_i,
  \diff_{V_i}(\Delta-V_i)\bigr)$.  Thus, by induction on the dimension, replacing $Y$
  by $V_i$, $Z_1$ by $V_i\cap V_{i+1}$, and $Z_2$ by the lc center contained in
  $W_i$, we conclude that $f^{-1}\bigl(f(W_i)\bigr)\cap V_i\cap V_{i+1}$ contains an
  lc center $U_i$ of $ \bigl(V_i, \diff_{V_i}(\Delta-V_i)\bigr)$.  By inversion of
  adjunction, $U_i$ is also an lc center of $(Y,\Delta)$ and it is contained in
  $W_{i+1}$.
 
  At the end we obtain that
  \begin{equation}
    \label{eq:7}
      W_m=Z_1\cap f^{-1}\bigl(f(Z_2)\bigr)\cap f^{-1}\bigl(f(V_2)\bigr)\cap\cdots\cap
      f^{-1}\bigl(f(V_{m-1})\bigr) 
  \end{equation}
  contains an lc center of $(Y,\Delta)$. Observe that $W_m$ contains $Z_1\cap
  f^{-1}(x)$ and is contained in $Z_1\cap f^{-1}\bigl(f(Z_2)\bigr)$.  These two are
  the same, hence we are done.
\end{newnum}

\begin{subrem}
  The statement of \eqref{lc.cent.thm} is stronger than that has been previously
  known \cite[1.5]{MR1457742}, \cite[4.8]{MR1993751}, \cite[3.45]{Fujinobook}.  The
  usual claim in a similar situation has been that every irreducible component of
  $f(Z_1)\cap f(Z_2)$ is dominated by an lc center, whose precise location was not
  known.

  It would also be interesting to find a proof of \eqref{lc.cent.thm} without using
  the MMP.
\end{subrem}

\begin{defini}\label{def:KK-morphism}
  Let $X$ be a normal scheme. A \emph{minimal quasi log canonical structure} or
  simply a \emph{\structure} on $X$ is a proper surjective morphism $f: (Y,\Delta)\to
  X$ where
  \begin{enumerate-p}
  \item  $(Y,\Delta)$ is a log canonical pair,
  \item  $\Delta$ is effective, \label{item:effective}
  \item  $\sO_X\simeq f_*\sO_Y$, and
  \item  $K_Y+\Delta\sim_{f,\bQ}0$. \label{item:minimal}
  \end{enumerate-p}
\end{defini}

\begin{subrem}
  This definition is similar to Ambro's 
  definition of a quasi-log variety \cite[4.1]{MR1993751}, \cite[3.29]{Fujinobook}.
  The main difference here, underscored by the word ``minimal'' in the definition, is
  the additional assumption
  (\ref{def:KK-morphism}.\ref{item:minimal}).

  One should also note that what Fujino calls a quasi-log variety is essentially $X$
  together with a qlc stratification which we define next.
\end{subrem}

\begin{defini}\label{def:qlc-strata}
  Let $X$ be a normal scheme and assume that it admits a \structure $f: (Y,\Delta)\to
  X$.  We define the \emph{qlc stratification} of $X$ \emph{with respect to $f$} or
  simply the \emph{\stratification} the following way: Let $\mcH_Y$ denote the set
  containing all the lc centers of $(Y,\Delta)$, including the components of $\Delta$
  and $Y$ itself.  For each $Z\in \mcH_Y$ let
  $$
  W_Z\leteq f(Z)\setminus \bigcup_{
    \begin{matrix}
      {Z'\in\mcH_Y} \\ f(Z)\not\subseteq f(Z')
    \end{matrix}
  } f(Z').
  $$
  Further let 
  $$
  \HX=\{ W_Z | Z\in \mcH_Y \}.
  $$
  Notice that it is possible that $W_Z=W_{Z'}$ for some $Z\neq Z'$, but in $\HX$
  they are only counted once. 
  Then 
  $$
  X=\coprod_{W\in \HX
  } W
  $$
  will be called the \emph{qlc stratification} of $X$ \emph{with respect to $f$} and
  the strata the \emph{\strata.} \uj{Note that by construction \ujjj{each \stratum}
    is reduced.}
\end{defini}

\begin{defini}
  Let $X_i$ be varieties that admit \structure\nospace s, $f_i: (Y_i,\Delta_i)\to
  X_i$ and let $W_i=\cup_{j=1}^{r_i}W_{i,j}$ be unions of some \strata on $X_i$ for
  $i=1,2$. Assume that there exists a morphism $\alpha: W_1\to W_2$. Then we will say
  that $\alpha$ is a \emph{qlc stratified morphism} \ujjj{if for every \stratum $W_{2,j}$,
  its preimage $\alpha^{-1}W_{2,j}$ is equal to a disjoint union of \strata
  $\cup_{\alpha}W_{1,j_\alpha}$ for an appropriate set of $\alpha$'s.}
\end{defini}

Using our new terminology we have the following important consequence of
\eqref{lc.cent.thm}.

\begin{cor}\label{cor:closure-of-union-is-union}
  Let $X$ be a normal variety with a \structure, $f: (Y,\Delta)\to X$.  Then the
  closure of any union of some \strata is also a union of some \strata.
\end{cor}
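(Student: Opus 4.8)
The plan is to reduce to the closure of a single stratum, identify that closure with the image of an lc center, and then show such an image is a union of strata using \eqref{lc.cent.thm}. Recall that $(Y,\Delta)$, being lc, has only finitely many lc centers, so $\mcH_Y$ and hence $\HX$ are finite; thus every union of strata is a finite union, and since closure commutes with finite unions and a finite union of unions of strata is again a union of strata, it suffices to prove that for each $Z\in\mcH_Y$ the closure $\overline{W_Z}$ is a union of strata. As $f$ is proper, $f(Z)$ is an irreducible closed subvariety of $X$, and $W_Z=f(Z)\setminus\bigcup f(Z')$ is obtained from it by deleting finitely many proper closed subsets (the sets $f(Z)\cap f(Z')$ for those $Z'\in\mcH_Y$ with $f(Z)\not\subseteq f(Z')$); hence $W_Z$ is a dense open subset of $f(Z)$ and $\overline{W_Z}=f(Z)$.

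It then remains to show that $f(Z)$ is a union of strata, and the crux is to check that every $x\in f(Z)$ lies in a stratum $W_{Z'}$ with $W_{Z'}\subseteq f(Z)$. To do this I would consider the family of lc centers $Z'\in\mcH_Y$ with $x\in f(Z')\subseteq f(Z)$; it is nonempty (it contains $Z$), so by noetherianity of $X$ it has a member $Z'$ with $f(Z')$ minimal for inclusion. I claim $x\in W_{Z'}$. Otherwise $x\in f(Z'')$ for some $Z''\in\mcH_Y$ with $f(Z')\not\subseteq f(Z'')$, so the irreducible component $T$ of $f(Z')\cap f(Z'')$ through $x$ is a proper closed subset of $f(Z')$, and \eqref{lc.cent.thm} applied to the lc centers $Z'$ and $Z''$ produces an lc center $Z'''\subseteq Z'$ with $f(Z''')=T$. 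But then $Z'''\in\mcH_Y$, $x\in f(Z''')=T\subsetneq f(Z')$, and $f(Z''')\subseteq f(Z')\subseteq f(Z)$, contradicting the minimality of $f(Z')$. Hence $x\in W_{Z'}$, and since $W_{Z'}\subseteq f(Z')\subseteq f(Z)$ this gives $f(Z)=\bigcup\{\,W_{Z'}\in\HX \mid W_{Z'}\subseteq f(Z)\,\}$, a union of strata, which completes the argument.

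The one substantive point is the claim in the previous paragraph: a priori the intersection of the images of two lc centers need not be the image of an lc center, so $f(Z)$ could fail to decompose into strata; it is precisely the refined form of \eqref{lc.cent.thm}, which places the lc center $Z'''$ inside $Z'$ (so that $f(Z''')\subseteq f(Z')$), that closes this gap. Everything else is formal manipulation of the definition of $W_Z$ together with the irreducibility of $f(Z)$, the properness of $f$, and the noetherianity of $X$.
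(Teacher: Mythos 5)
Your proof is correct and follows essentially the same route as the paper's: reduce to the closure of a single stratum, observe that this closure is $f(Z)$ and that the part outside $W_Z$ consists of intersections with images of other lc centers, and invoke \eqref{lc.cent.thm} --- crucially in its refined form placing the new lc center inside $Z_1$ --- to decompose everything into strata. The only difference is presentational: your minimal-image argument makes explicit the noetherian induction that the paper's terse proof (``this is covered by a union of \strata'') leaves implicit.
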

\begin{proof}
  It is enough to prove this for the closure of a single \stratum. By definition, the
  difference between the closure and the \stratum is a union of intersections of that
  single stratum with the images of lc centers. By \eqref{lc.cent.thm} this is
  covered by a union of \strata.
\end{proof}
 
In \eqref{rem:db-is-sn} we observed that DB singularities are seminormal, so it
follows from Theorem~\ref{thm:main-proof} that the closure of any union of \strata is
seminormal.  On the other hand it also follows from the somewhat simpler
\eqref{cor:closure-of-union-is-union} and similar results from \cite{Fujinobook}.

\begin{prop}[\protect{\cite{MR1993751}, \cite[\S
    3]{Fujinobook}}]\label{prop:seminormal} 
  Let $X$ be a normal variety that admits a \structure, $f: (Y,\Delta)\to X$.  Then
  each \stratum is normal and the closure of any union of \strata is seminormal.
\end{prop}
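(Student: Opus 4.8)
The plan is to reduce to a $\bQ$-factorial dlt model, deduce normality of the \strata from adjunction together with the connectedness machinery of the preceding section, and then obtain seminormality of closures of unions of \strata by induction on dimension, using that the normalization maps of the components are compatible with the stratifications. Concretely, I would first replace $(Y,\Delta)$ by a $\bQ$-factorial minimal dlt model $f^{\rm m}\colon (Y^{\rm m},\Delta^{\rm m})\to (Y,\Delta)$, which exists by \eqref{thm:dlt-models}. Since $f^{\rm m}$ is crepant and birational between normal varieties, $f\circ f^{\rm m}\colon (Y^{\rm m},\Delta^{\rm m})\to X$ is again a \structure (in particular $(f\circ f^{\rm m})_*\sO_{Y^{\rm m}}=f_*\sO_Y=\sO_X$, and $K_{Y^{\rm m}}+\Delta^{\rm m}\sim_{\bQ}(f^{\rm m})^*(K_Y+\Delta)\sim_{f,\bQ}0$), and the images of the lc centers of $(Y^{\rm m},\Delta^{\rm m})$ are exactly the images of the lc centers of $(Y,\Delta)$; hence the \stratification of $X$ does not change and we may assume $(Y,\Delta)$ is dlt, so that every lc center $Z$ of $(Y,\Delta)$ is normal.

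Next, fix a \stratum $W_Z$ and, using that $W_Z$ depends only on $f(Z)$, choose $Z$ minimal among the lc centers with image $f(Z)$. Iterated adjunction along the components of $\rdown{\Delta}$ through $Z$ yields a dlt pair $(Z,\Delta_Z)$ with $K_Z+\Delta_Z\sim_{\bQ}(K_Y+\Delta)\resto{Z}$, which is $\bQ$-trivial over $f(Z)$; let $Z\xrightarrow{\,g\,}S\xrightarrow{\,\pi\,}f(Z)$ be the Stein factorization of $f\resto{Z}$, so that $S$ is normal and integral, $\pi$ is finite, and $g$ has connected fibres. The heart of the matter is that $\pi$ is an isomorphism over $W_Z$; equivalently, that the natural injection $\sO_{f(Z)}\hookrightarrow (f\resto{Z})_*\sO_Z=\pi_*\sO_S$ is an isomorphism over $W_Z$. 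This is precisely where the vanishing and torsion-freeness package enters (the same one used in \eqref{2comp->plt.prop} and \eqref{splitting.thm}): pushing forward a Koszul-type resolution of $\sO_Z$ on $Y$ and using that $\nklt(Y,\Delta)\sim_{\bQ,f}K_Y+(\text{a klt boundary})$ together with Kollár vanishing, one shows that the cokernel of $\sO_{f(Z)}\to (f\resto{Z})_*\sO_Z$ is supported on the images of lc centers properly contained in $Z$, i.e.\ on $f(Z)\setminus W_Z$ — this is the argument of \cite[\S4]{MR1993751} and \cite[\S3]{Fujinobook}. Granting this, $W_Z\simeq\pi^{-1}(W_Z)$ is an open subset of the normal variety $S$, hence normal. (A \stratum arising from a component of $\Delta$ that is not itself an lc center is handled the same way after one step of adjunction.)

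Finally, for seminormality, by \eqref{cor:closure-of-union-is-union} the closure of any union of \strata is itself a union of \strata, so it suffices to show that every union $V$ of \strata is seminormal; I would prove this by induction on $\dim V$, the case $\dim V=0$ being trivial since $0$-dimensional \strata are reduced points. Let $V^{(1)},\dots,V^{(r)}$ be the irreducible components of $V$: each equals $f(Z_i)$ for an lc center $Z_i$ with $W_{Z_i}$ dense open in it, and by the previous step its normalization is $f(Z_i)^\nu\simeq S_i$, which again carries a \structure via the Stein factorization above. Moreover the normalization $\nu\colon V^\nu=\coprod_i f(Z_i)^\nu\to V$ is compatible with the stratifications, in the sense that its conductor — the non-normal locus $D\subset V$ (a union of \strata by \eqref{cor:closure-of-union-is-union}) and its scheme-theoretic preimage $\ol D\subset V^\nu$ (a union of \strata of the $f(Z_i)^\nu$ by \eqref{lc.cent.thm}) — lies, on either side, in a proper closed subset of the relevant components, so that $\dim D,\dim\ol D<\dim V$. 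By the inductive hypothesis $D$ and $\ol D$ are seminormal; since $V^\nu$ is normal, the gluing criterion for seminormality (cf.\ \cite[\S3]{Fujinobook}) identifies $V$ with the pushout $V^\nu\sqcup_{\ol D}D$ and shows it is seminormal. The only genuinely non-formal ingredient in the whole argument is the normality statement — that the Stein factorization of an lc center onto its image is an isomorphism over the corresponding open \stratum — which, unlike the dlt reduction and the gluing induction, cannot be extracted from \eqref{lc.cent.thm} and \eqref{cor:closure-of-union-is-union} alone and genuinely relies on Kollár-type vanishing.
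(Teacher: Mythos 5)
Your dlt reduction and the general shape (normality of strata first, then an induction for seminormality) are reasonable, but the two key steps are not correct as stated. For normality, the heart of your argument is the claim that for a \emph{minimal} lc center $Z$ with $f(Z)=\ol W$ the finite part $\pi\colon S\to f(Z)$ of the Stein factorization is an isomorphism over the stratum, equivalently that the cokernel of $\sO_{f(Z)}\to (f\resto{Z})_*\sO_Z$ is supported on images of smaller lc centers. This is false. Take $X=\bA^3$, $\Delta=D=\{x^2=zy^2\}$ (the Whitney umbrella) and $f=\id_X$: blowing up the $z$-axis gives a log smooth crepant model with boundary $D'+E$ ($D'$ the strict transform, $E$ the exceptional divisor, both with coefficient one), and the lc centers mapping onto the $z$-axis are $E$ and the curve $C'=D'\cap E$. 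The minimal one is $C'$, and in the chart $x=yv$ one has $C'=\{y=0,\ z=v^2\}$, so $C'\to\{x=y=0\}$ is finite of degree $2$; hence $(f\resto{C'})_*\sO_{C'}$ has rank two generically on the stratum (which here is the whole $z$-axis, and is of course normal), and your map is not an isomorphism over any open set. So normality of strata cannot be extracted from birationality of a Stein factorization of a conveniently chosen center; the paper itself is careful about exactly this point, noting in the proof of \eqref{thm:main-proof} that the normal variety $\what W$ produced by \eqref{lem:key-lemma} ``may not be the normalization of $\ol W$''. Deferring to \cite[\S 4]{MR1993751} and \cite[\S 3]{Fujinobook} is legitimate — that is what the paper's own (essentially citation) proof does, after using \eqref{cor:closure-of-union-is-union} to know that the closure of a union of strata is again a union of strata — but the mechanism you describe is not their argument and does not work.

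The seminormality induction has a second, independent gap. You assert that ``the gluing criterion for seminormality identifies $V$ with the pushout $V^\nu\sqcup_{\ol D}D$,'' but that identification is equivalent to $\sI_{D\subseteq V}=\nu_*\sI_{\ol D\subseteq V^\nu}$, i.e.\ to the conductor being a radical ideal of $\nu_*\sO_{V^\nu}$ — and this is essentially the statement to be proved. It does not follow from \eqref{lc.cent.thm}, \eqref{cor:closure-of-union-is-union}, normality of the components, or the inductive hypothesis: a tacnode (two smooth branches with second-order contact) has smooth components, normal normalization, reduced seminormal non-normal locus and reduced seminormal preimage, yet is not seminormal. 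So the data you feed the gluing step is insufficient, and a non-formal input (vanishing/torsion-freeness, as in \eqref{splitting.thm}, or the qlc adjunction–vanishing package behind Fujino's [3.33] and [3.44]) is needed for the seminormality half as well — contrary to your closing claim that normality is the only non-formal ingredient. Note also that in the paper's proof of \eqref{thm:main-proof} the analogous identity (\ref{eq:8}) is \emph{deduced from} the seminormality supplied by this proposition, so assuming such an identity inside a proof of the proposition without an independent argument risks circularity.
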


\begin{proof}
  Let $T$ be the closure of a union of some \strata.  Then by
  Corollary~\ref{cor:closure-of-union-is-union} and \cite[3.39(i)]{Fujinobook} (cf.\
  \cite[4.4]{MR1993751}) the qlc centers of $T$ are exactly 
  the \strata (of $X$) that lie inside $T$.  It follows by \cite[3.33]{Fujinobook}
  that $T$ is seminormal and by \cite[3.44]{Fujinobook} (cf.\ \cite[4.7]{MR1993751})
  that each \stratum is normal.
\end{proof}

\begin{cor}\label{cor:conductor}
  Let $X$ be a normal variety with a \structure, $f: (Y,\Delta)\to X$.  Then the
  support of the conductor subscheme of the closure of any union of \strata is
  {contained in} a smaller dimensional union of \strata.
\end{cor}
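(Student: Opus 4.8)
The plan is to deduce the statement from the combinatorial result \eqref{cor:closure-of-union-is-union}, the normality of individual \strata from \eqref{prop:seminormal}, and the standard identification of the support of a conductor subscheme with a non-normal locus.

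First I would let $T$ denote the closure of the given union of \strata. By \eqref{cor:closure-of-union-is-union}, $T$ is itself a union of \strata, say $T=\coprod_{W\in S}W$ with $S\subseteq\HX$, and since each \stratum is reduced, so is $T$. Let $T_1,\dots,T_k$ be its irreducible components. The key preliminary point is that each $T_i$ is the closure of a unique \stratum $W_i\in S$: the generic point of $T_i$ belongs to a unique \stratum of $X$, which meets $T$ and hence lies in $S$; calling it $W_i$, the set $\overline{W_i}$ is an irreducible closed subset of $T$ containing $T_i$, so $\overline{W_i}=T_i$. Thus $W_i$ is dense and open in $T_i$, and by \eqref{prop:seminormal} it is normal; hence the non-normal locus $N_i$ of $T_i$ is contained in the proper closed subset $T_i\setminus W_i$, and in particular $\dim N_i<\dim T_i\le\dim T$.

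Next I would use that the normalization of a reduced scheme is the disjoint union of the normalizations of its irreducible components, so the normalization $n\colon\overline T\to T$ restricts over each $T_i$ to its normalization, and the non-normal locus of $T$ is the union of the pairwise intersections $T_i\cap T_j$ ($i\ne j$) together with the loci $N_i$. Since $n$ is finite and birational, this non-normal locus is precisely the support of the conductor subscheme of $T$, so it remains to bound the $T_i\cap T_j$ and the $N_i$. Applying \eqref{cor:closure-of-union-is-union} to $T_i=\overline{W_i}$ gives a decomposition $T_i=\coprod_{W\in S_i}W$ into \strata with $W_i\in S_i\subseteq S$; then $T_i\cap T_j=\coprod_{W\in S_i\cap S_j}W$ and $T_i\setminus W_i=\coprod_{W\in S_i,\,W\ne W_i}W$ are both unions of \strata (the former also being covered by \strata via \eqref{lc.cent.thm}). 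Moreover $T_i\cap T_j$ is a proper closed subset of the irreducible $T_i$ (distinct irreducible components, neither contained in the other), so $\dim(T_i\cap T_j)<\dim T_i\le\dim T$; likewise $T_i\setminus W_i$ is a proper closed subset of $T_i$, hence of dimension $<\dim T$, and $N_i\subseteq T_i\setminus W_i$. Taking the union over all $i$ and all pairs $i\ne j$ exhibits the support of the conductor subscheme of $T$ inside a union of \strata of dimension strictly less than $\dim T$, as required.

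The only genuinely non-formal inputs are \eqref{cor:closure-of-union-is-union} (hence ultimately \eqref{lc.cent.thm}), used to decompose $T_i$ and $T_i\cap T_j$ into \strata, and \eqref{prop:seminormal}, used for the normality of the open \stratum of each component; the rest is elementary. I do not anticipate a real obstacle, but two points deserve care: the identification of the support of the conductor subscheme with the non-normal locus of $T$ (via finiteness and birationality of $n$ and the compatibility of normalization with the decomposition into irreducible components), and the check that $T_i\setminus W_i$ and $T_i\cap T_j$ drop in dimension below $\dim T$ itself — which is fine because each component satisfies $\dim T_i\le\dim T$.
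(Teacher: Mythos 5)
Your proposal is correct and follows essentially the same route as the paper: the strata are normal by \eqref{prop:seminormal}, so the non-normal locus (= support of the conductor) lies in the parts subtracted in \eqref{def:qlc-strata}, which by \eqref{lc.cent.thm} / \eqref{cor:closure-of-union-is-union} form a union of \strata of dimension strictly less than $\dim T$. Your write-up merely makes explicit the reduction to irreducible components and the identification of the conductor's support with the non-normal locus, which the paper's two-sentence proof leaves implicit.
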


\begin{proof}
  As individual \strata are normal, it follows that the conductor subscheme is
  contained in the part of the closure that was subtracted in \eqref{def:qlc-strata}.
  By \eqref{lc.cent.thm} this is a union of \strata and as it does not contain any
  (maximal) component of the original union, the dimension of each contributing
  strata has to be strictly smaller.
\end{proof}

\section{Log canonical singularities are Du~Bois}
\label{sec:lc-is-db}

\begin{lem}\label{lem:key-lemma}
  Let $X$ be a normal variety and $f:(Y,\Delta)\to X$ a \structure on $X$. Let $W\in
  \HX$ be a qlc stratum of $X$ and $\ol W$ its closure in
  $X$. 
  Then there exist a normal variety $\what W$ with a \structure $g:(Z,\Sigma)\to
  \what W$ such that $g(\nklt(Z,\Sigma))\neq \what W$ and a finite surjective qlc
  stratified morphism $\what W\to \ol W$.
\end{lem}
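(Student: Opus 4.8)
The plan is to realize $\what W$, together with its qlc structure, by restricting the minimal qlc structure of $X$ to a carefully chosen log canonical center of $(Y,\Delta)$ lying over $\ol W$ and then passing to a Stein factorization; concretely, I would pick a suitable center $Z_0$ with $f(Z_0)=\ol W$, put the adjunction boundary $\Sigma$ on it, and let $\what W$ be the Stein factorization of $Z_0\to\ol W$. The role of minimality in the choice of $Z_0$ is to prevent the non‑klt locus of $(Z_0,\Sigma)$ from dominating $\ol W$, which is exactly what is needed to later apply \eqref{splitting.thm} to $g$.

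First I would use \eqref{thm:dlt-models} to replace $f:(Y,\Delta)\to X$ by the \structure coming from a $\bQ$-factorial dlt model of $(Y,\Delta)$; this changes neither $X$ nor the collection of images in $X$ of log canonical centers, hence neither $\HX$ nor $\ol W$, so I may assume $(Y,\Delta)$ is dlt. Write $W=W_Z$ with $Z\in\mcH_Y$, so $\ol W=f(Z)$. If some log canonical center of $(Y,\Delta)$ maps onto $\ol W$, choose $Z_0$ minimal (for inclusion) among all such; otherwise $\ol W=X$ and $f(\nklt(Y,\Delta))\neq X$, and one simply takes $\what W=X$, $g=f$, $\tau=\id$, which trivially satisfies everything. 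Assuming $Z_0$ chosen, since $(Y,\Delta)$ is dlt, $Z_0$ is normal and iterated adjunction (the different, \cite[16.5]{K+92}) gives an effective $\bQ$-divisor $\Sigma$ on $Z_0$ with $(Z_0,\Sigma)$ dlt and $K_{Z_0}+\Sigma\sim_{\q}(K_Y+\Delta)|_{Z_0}$. Let $Z_0\xrightarrow{\,g\,}\what W\xrightarrow{\,\tau\,}\ol W$ be the Stein factorization of $f|_{Z_0}$; then $\what W$ is normal, $\tau$ is finite and surjective, $g$ is proper surjective with $g_*\sO_{Z_0}\simeq\sO_{\what W}$, and since $\tau$ is finite, $K_{Z_0}+\Sigma\sim_{\q,f}0$ gives $K_{Z_0}+\Sigma\sim_{\q,g}0$. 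Hence $g:(Z_0,\Sigma)\to\what W$ is a \structure.

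Next I would check $g(\nklt(Z_0,\Sigma))\neq\what W$. Every irreducible component of $\nklt(Z_0,\Sigma)$ is, by inversion of adjunction, a log canonical center of $(Y,\Delta)$ properly contained in $Z_0$, hence by the minimality of $Z_0$ it does not dominate $\ol W$; therefore $f(\nklt(Z_0,\Sigma))$ is contained in a proper closed subset $P\subsetneq\ol W$, and consequently $g(\nklt(Z_0,\Sigma))\subseteq\tau^{-1}(P)\subsetneq\what W$ because $\tau$ is finite and surjective.

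The remaining point — and I expect the only real difficulty — is that $\tau$ is qlc stratified, i.e. that the $\tau$-preimage of each \stratum of $X$ contained in $\ol W$ is a disjoint union of \strata of the qlc stratification attached to $g$. By \eqref{def:qlc-strata} and \eqref{cor:closure-of-union-is-union} the \strata of $X$ inside $\ol W$ are the $W_{Z'}$ with $Z'\in\mcH_Y$ and $f(Z')\subseteq\ol W$, while the \strata of $\what W$ are governed, through $g$ and inversion of adjunction, by the log canonical centers of $(Y,\Delta)$ contained in $Z_0$. To pass between the two descriptions I would invoke the precise form of \eqref{lc.cent.thm}: applied to $Z_0$ and such a $Z'$, with $T$ an irreducible component of $f(Z_0)\cap f(Z')=f(Z')$ and $Z_0\cap f^{-1}(T)\to S\to T$ the Stein factorization, it produces, for each component $S_i$ of $S$, a log canonical center of $(Y,\Delta)$ inside $Z_0$ dominating $S_i$; moreover the components of $S$ are exactly the components of $\tau^{-1}(T)$. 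Tracing this correspondence, and using that each \stratum is the locus of a uniquely determined center, should identify $\tau^{-1}(W_{Z'})$ with a union of full $\what W$-\strata. The crux is precisely this compatibility of the two stratifications under their respective Stein factorizations — in particular checking that no stratum gets split — whereas the earlier steps are essentially formal.
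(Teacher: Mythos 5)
Your strategy is genuinely different from the paper's. The paper never jumps to a minimal lc center: it descends one step at a time, choosing an irreducible component $E\subseteq\rdown\Delta$ with $\ol W\subseteq f(E)$, passing to the \structure $(E,\diff_E\Delta)\to G$ given by the divisorial different and the Stein factorization of $f\resto{E}$, replacing $X$ by $G$ and $W$ by its preimage, and terminating by noetherian induction exactly when $\ol W=f(E)$ and $f(\nklt(E,\diff_E\Delta))\neq f(E)$; the finite morphism $\what W\to\ol W$ is then the composite of the finite Stein parts, and the qlc stratified property only ever has to be checked for a single divisorial adjunction at a time, where \eqref{lc.cent.thm} with $Z_1=E$ applies verbatim and the property composes along the tower. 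Your minimality-of-$Z_0$ device is an attractive substitute for the termination argument: since (for the dlt model) every lc center of $(Z_0,\Sigma)$ is an lc center of $(Y,\Delta)$ strictly contained in $Z_0$, minimality gives $g(\nklt(Z_0,\Sigma))\neq\what W$ directly, and the verification that $g:(Z_0,\Sigma)\to\what W$ is a \structure is correct, granted iterated dlt adjunction to a normal lc center rather than only the divisorial different which is all the paper needs.

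The gap is the step you yourself flag: you do not prove that $\tau:\what W\to\ol W$ is qlc stratified, and that is precisely the content the lemma must deliver for the double induction in \eqref{thm:main-proof}. What must be shown is that every $g$-qlc stratum of $\what W$ maps into a single \stratum of $X$; equivalently, for each $V\in\mcH_{Z_0}$ and each $Z'\in\mcH_Y$ with $f(V)\not\subseteq f(Z')$, the locus $g(V)\cap\tau^{-1}(f(Z'))$ is covered by sets $g(V')$ with $V'\in\mcH_{Z_0}$ and $g(V)\not\subseteq g(V')$. This can indeed be extracted from the Stein-factorization form of \eqref{lc.cent.thm} applied to $Z_1=V$, $Z_2=Z'$, but only after two identifications you assert rather than prove: that $g\bigl(V\cap f^{-1}(T)\bigr)=g(V)\cap\tau^{-1}(T)$ for each component $T$ of $f(V)\cap f(Z')$, and that every irreducible component of this locus is dominated by a component $S_i$ of the Stein factorization $S$ of $V\cap f^{-1}(T)\to T$; your parenthetical claim that the components of $S$ are exactly the components of $\tau^{-1}(T)$ is not justified as stated (a priori one only gets a finite surjection from $S$ onto the relevant locus, which, however, is all one needs). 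Since the write-up stops at ``should identify,'' the proof is incomplete at its decisive point, whereas the earlier, genuinely formal steps are fine. A minor further point: your opening dichotomy (``some lc center maps onto $\ol W$, otherwise $\ol W=X$ and $f(\nklt(Y,\Delta))\neq X$'') tacitly assumes the stratum $W$ comes from an actual lc center or from $Y$ itself; as in the paper's proof, one should read $\mcH_Y$ accordingly, or say explicitly how the strata attached to coefficient-$<1$ components of $\Delta$ are handled, before invoking it.
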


\begin{proof}
  \ujj{%
  We will repeat the following procedure until all the desired conditions are
  satisfied.
  
  {\sc Iteration:}} Note that we may replace $(Y,\Delta)$ by a $\q$-factorial dlt
model by \eqref{thm:dlt-models}.  Recall that in that case $\ol W$ is the union of
some \strata by \eqref{cor:closure-of-union-is-union}. 
  If $\ol W=X$ and $\ujj{f(\nklt(Y,\Delta))}\neq X=\ol W$ then choosing
  $(Z,\Sigma)=(Y,\Delta)$, $g=f$, and $\what W=X$ the desired conditions are
  satisfied.  Otherwise, there exists an irreducible component $E\subseteq
  \rdown\Delta$ such that $\ol W\subseteq {f(E)}$.  Consider the Stein factorization
  of $f\resto E$ :
  $$
  \xymatrix{f\resto{E}:E\ar[r]^-{f_E} &{G}\ar[r]^-\sigma & {f(E)}}.
  $$ 
  Observe that then $f_E: (E,\diff_E\Delta)\to {G}$ is a \structure, ${G}$ is normal,
  and $\sigma$ is finite. Let $W_1=\sigma^{-1}(W)$ denote the preimage of $W$, and
  $\ol W_1$ its closure in $G$.  By \eqref{lc.cent.thm} the $f_E$-qlc stratification
  of ${G}$ is just the preimage of the restriction of the $f$-qlc stratification of
  $X$ to $f(E)$, so the induced morphism $\ol W_1\to \ol W$ is a qlc stratified
  morphism and as long as $\ol W\neq {f(E)}$ or $f(\nklt(E,\diff_E\Delta))=f(E)$ we
  \ujj{%
    may go back to the beginning and repeat our procedure } with $X$ replaced with
  ${G}$ and $W$ replaced with $\sigma^{-1}(W)$ without changing the induced qlc
  structure on $W$. By noetherian induction this process must end \ujjj{and then} we
  will have $\ol W= {f(E)}$ and $f(\nklt(E,\diff_E\Delta))\neq f(E)$.  Then $f_E:
  (E,\diff_E\Delta)\to {G}$ and $\sigma:\what W:={G}\to\ol W$ satisfy the desired
  conditions.
\end{proof}

Theorem~\ref{thm:main} is implied by the following.

\begin{thm}\label{thm:main-proof}
  If $X$ admits a \structure, $f: (Y,\Delta)\to X$, then the closure of any union of 
  \strata is DB.
\end{thm}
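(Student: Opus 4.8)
The plan is to prove the statement by induction on the dimension $d$ of the closure being considered. By \eqref{cor:closure-of-union-is-union} that closure is again a union of \strata, so the precise inductive assertion is: \emph{for every normal variety $X$ with a \structure $f\colon(Y,\Delta)\to X$, the closure of any union of \strata which has dimension $\le d$ is DB.} For $d=0$ the closure is a reduced zero-dimensional scheme, which is DB. For the inductive step let $T$ be such a closure, write its irreducible components as $T_1,\dots,T_r$, and note that each $T_i$ equals $\ol{W_i}$ for its generic \stratum $W_i$, which is normal (hence irreducible) by \eqref{prop:seminormal}. Since the \strata are disjoint, the pairwise intersections $T_i\cap T_j$ $(i\ne j)$ are unions of \strata of dimension $<d$, hence DB by the inductive hypothesis; and since $T$ is seminormal \eqref{prop:seminormal}, the finite surjective morphism $\coprod_i T_i\to T$ induces an isomorphism $\sI_{D\subseteq T}\to(\coprod_i T_i\to T)_*\sI_{D'}$, where $D\leteq\bigcup_{i\ne j}(T_i\cap T_j)$ and $D'$ is its reduced preimage, both DB by induction. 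So \eqref{thm:db-criterion} reduces us to the case $T=\ol W$, the closure of a single \stratum $W$, with $\ol W$ irreducible of dimension $d$.

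Next I would pass to a good finite cover of $\ol W$. Let $\partial W\leteq\ol W\setminus W$, a union of \strata of dimension $<d$ by \eqref{cor:closure-of-union-is-union}, hence DB by induction, and let $\nu\colon\wt W\to\ol W$ be the normalization, which is an isomorphism over $W$ and in particular outside $\partial W$. Using that $\ol W$ is seminormal, the finite surjective morphism $\wt W\sqcup\partial W\to\ol W$ induces an isomorphism $\sI_{\partial W\subseteq\ol W}\to\nu_*\sI_{\nu^{-1}(\partial W)_{\red}\subseteq\wt W}$, so by \eqref{thm:db-criterion} it suffices to show that $\wt W$ and $\nu^{-1}(\partial W)_{\red}$ are DB. Now apply \eqref{lem:key-lemma} to $W$: it produces a normal variety $\what W$ carrying a \structure $g\colon(Z,\Sigma)\to\what W$ with $g(\nklt(Z,\Sigma))\ne\what W$, together with a finite surjective qlc stratified morphism $\what W\to\ol W$, which—$\what W$ being normal—factors as $\what W\xrightarrow{q}\wt W\to\ol W$ with $q$ finite surjective between normal varieties. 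Granting that $\what W$ is DB, \eqref{cor:finite-cover-is-ok} gives that $\wt W$ is DB; and since $\what W\to\ol W$ is qlc stratified, the reduced preimage in $\what W$ of $\partial W$ is a union of $g$-\strata of dimension $<d$, hence DB by induction, so \eqref{cor:finite-cover-is-ok} applied to $q$ shows $\nu^{-1}(\partial W)_{\red}$ is DB. Everything is thus reduced to the main case.

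In the main case $\what W$ is a normal variety with a \structure $g\colon(Z,\Sigma)\to\what W$ such that $V\leteq g(\nklt(Z,\Sigma))$ is a \emph{proper} closed subset of the irreducible variety $\what W$, so $\dim V<d$; by the construction of the \strata and \eqref{cor:closure-of-union-is-union}, $V$ is a union of $g$-\strata, hence DB by induction. Choose a log resolution $\pi\colon\wt Z\to Z$ (resolving also the ideal $\sI_V\cdot\sO_Z$) such that $F\leteq(g\circ\pi)^{-1}(V)_{\red}$ is an snc divisor, and set $\wt g\leteq g\circ\pi$. Then \eqref{splitting.thm}, applied to $g$ and the modification $\pi$, produces a left inverse of the natural map
$$
\sI_{V\subseteq\what W}\ \longrightarrow\ \myR\wt g_*\sI_{F\subseteq\wt Z}
$$
(note that $\sI_{F\subseteq\wt Z}=\sO_{\wt Z}(-F)$ since $F$ is a reduced divisor). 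As $\wt Z$ is smooth and $F$ is snc, both are DB, and $V$ is DB as observed; therefore \eqref{thm:db-criterion}, applied to $\wt g\colon\wt Z\to\what W$ with the subschemes $V$ and $F$, yields that $\what W$ is DB. This closes the induction, so \eqref{thm:main-proof}, and hence \eqref{thm:main}, follows.

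The genuinely hard input is already packaged into \eqref{thm:higher-direct-images-split} and \eqref{splitting.thm} (and into the criterion \eqref{thm:db-criterion}); the real work above is bookkeeping, and the main obstacle is keeping the induction well-founded while repeatedly leaving the ambient $X$—passing to the auxiliary varieties $\what W$ of \eqref{lem:key-lemma}, to normalizations, and to the singular closures $\ol W$, which are only seminormal rather than normal. The corollaries \eqref{cor:finite-cover-is-ok}, \eqref{prop:seminormal} (and, in spirit, \eqref{cor:closure-of-union-is-union} and \eqref{lc.cent.thm}) are exactly what make these transitions compatible with the ``DB $+$ DB $+$ DB $\Rightarrow$ DB'' shape of \eqref{thm:db-criterion}: at every application the two auxiliary objects one feeds in ($D$ and $D'$; or $\wt W$ and $\nu^{-1}(\partial W)_{\red}$; or $F$ and $V$) are either smooth/snc or strictly lower-dimensional unions of \strata, hence already known to be DB.
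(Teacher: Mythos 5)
Your argument follows the paper's proof in all essentials: the same two uses of \eqref{thm:db-criterion} (once to descend from a normalization, once in the ``main case'' via \eqref{splitting.thm}), the same passage to the auxiliary covers $\what W$ produced by \eqref{lem:key-lemma}, and the same descent along finite covers of normal varieties via \eqref{cor:finite-cover-is-ok}. The organizational differences --- a single induction on the dimension of the closure, quantified over all ambient varieties with a \structure, instead of the paper's double induction on $\dim X$ and $\dim T$; and a two-stage descent (first to the irreducible components, then to the normalization of each) instead of the paper's single application of \eqref{thm:db-criterion} to $\wt T\to T$ --- are sound. In particular, your observation that the conductor of $\wt W\to\ol W$ is supported in $\partial W$ because the stratum $W$ itself is normal lets you take $\partial W$ as the ``conductor-containing union of \strata'' directly, bypassing \eqref{cor:conductor}.

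One step is under-justified. In the reduction to irreducible components you assert that seminormality of $T$ yields an isomorphism $\sI_{D\subseteq T}\simeq\nu_*\sI_{D'}$ for the partial normalization $\nu\colon\coprod_i T_i\to T$, where $D=\bigcup_{i\ne j}(T_i\cap T_j)$. The paper's corresponding statement, \eqref{claim:overset-of-conductor} combined with the equality $\sI_{Z\subseteq T}=\tau_*\sI_{\wt Z\subseteq\wt T}$, is proved only for the \emph{full} normalization $\tau\colon\wt T\to T$ and for reduced subschemes containing its conductor --- and $D$ need not contain that conductor, since the individual components $T_i$ may fail to be normal away from $D$. So your claim requires its own argument: either extend \eqref{claim:overset-of-conductor} to the intermediate extension $\sO_T\subseteq\prod_i\sO_{T_i}$ (using that a seminormal ring is seminormal in every ring between it and its normalization, so that the conductor of this intermediate extension is again radical on both sides), or sidestep the issue by imitating the paper: apply \eqref{thm:db-criterion} once to the full normalization $\wt T\to T$, taking for the subscheme $W$ a union of \strata of dimension $<d$ that contains the conductor of $\tau$, which exists by \eqref{cor:conductor}. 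Either repair is routine, and the rest of your proof stands.
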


\begin{proof}
  Let $T\subseteq X$ be a union of \strata.  By \eqref{cor:closure-of-union-is-union}
  we know that $\ol T$, the closure of $T$ in $X$, is also a union of \strata, so by
  replacing $T$ with $\ol T$ we may assume that $T$ is closed. Let $\wt T$ denote the
  normalization of $T$. We have that $\dsize T=\bigcup_{W\in\mcJ} W$ for some
  $\mcJ\subseteq \HX$, so $T$ is seminormal by \eqref{prop:seminormal}.  For
  $W\in\mcJ$, we will denote the closure of $W$ in $X$ by $\ol W$. Note that by
  definition $\ol W$ is contained in $T$. In order to prove that $T$ is DB, we will
  apply a double induction the following way:
  \begin{itemize}
  \item {\it induction on $\dim X$:} Assume that the statement holds if $X$ is
    replaced with a smaller dimensional variety admitting a \structure.
  \item {\it induction on $\dim T$:} Assume that the statement holds if $X$ is fixed
    and $T$ is replaced with a smaller dimensional subvariety of $X$ which is also a
    union of \strata.
  \end{itemize}

  First assume that $X\neq T$. Then $\ol W$ must also be a proper subvariety of $X$
  for any $W\in\mcJ$. Then by \eqref{lem:key-lemma} for each $W\in\mcJ$ there exists
  a normal variety $\what W$ with a \structure and a finite surjective qlc stratified
  morphism $\sigma: \what W\to \ol W$.  By induction on $\dim X$ we obtain that
  $\what W$ is DB.  Then by \eqref{cor:finite-cover-to-normalization} it follows that
  the normalization of $\ol W$ is DB as well. Note that $\what W$ is normal, but may
  not be the normalization of $\ol W$, however $\sigma$ factors through the
  normalization morphism.

  Let $\mcJ'\subseteq\mcJ$ be a subset such that $\dsize T=\bigcup_{W\in\mcJ'} \ol W$
  and $\ol W\not\subseteq \ol {W'}$ for any $W, W'\in\mcJ'$. Then let $\dsize \what
  T\leteq\coprod_{W\in\mcJ'} \what W$ and $\what\tau: \what T\to T$ the natural
  morphism.  Observe that as the $\what W$ have DB singularities, so does $\what T$
  and then by \eqref{cor:finite-cover-to-normalization} it follows that for the
  normalization of $T$, $\tau: \wt T\to T$, $\wt T$ is DB as well.
  Next let $Z\subset T$ be the conductor subscheme of $T$ and $\wt Z$ its preimage in
  $\wt T$.  Then since $T$ is seminormal, both $Z$ and $\wt Z$ are reduced and
  \begin{equation}
    \label{eq:8}
    \sI_{Z\subseteq T}= \tau_*\sI_{\wt Z\subseteq \wt T}.
  \end{equation}

  \uj{%
  \begin{subclaim}\label{claim:overset-of-conductor}
    Let $\Gamma\subseteq T$ be a reduced 
    subscheme that contains the 
    conductor $Z$ and let $\wt \Gamma$ be its preimage in $\wt T$. Then
    $\sI_{\Gamma\subseteq T}\subseteq \sO_T\subseteq \tau_*\sO_{\wt T}$ is also a
    $\tau_*\sO_{\wt T}$ ideal, i.e., $\sI_{\Gamma\subseteq T}=\sI_{\Gamma\subseteq
      T}\cdot \tau_*\sO_{\wt T}$. In particular,
    \begin{equation}
      \label{eq:14}
      \sI_{\Gamma\subseteq T}= \tau_*\sI_{\wt \Gamma\subseteq \wt T}.
    \end{equation}
  \end{subclaim}

  \begin{proof}
    If $\sJ=\sI_{\Gamma\subseteq T}$ is a $\tau_*\sO_{\wt T}$ ideal, then
    (\ref{eq:14}) follows, so it is enough to prove the first statement.  \ujjj{%
      Let $\sI=\sI_{Z\subseteq T}$. Clearly, $\sJ\cdot\tau_*\sO_{\wt T}\subseteq
      \sI\cdot \tau_*\sO_{\wt T} =\sI\subseteq \sO_T$.  Then $\sJ\cdot \tau_*\sO_{\wt
        T}\subseteq \sO_T\cap \sqrt{\sJ\cdot\tau_*\sO_{\wt T}}$, which is equal to
      $\sqrt{\sJ}$ by \cite[5.14]{MR0242802}. In turn, $\sqrt \sJ=\sJ$ by assumption,
      so we have that $\sJ\cdot\tau_*\sO_{\wt T}\subseteq \sJ$.  }
  \end{proof}
}
  
  \noindent 
  \uj{%
    By \eqref{cor:conductor} $Z$ is contained in a union of \strata whose dimension
    is smaller then $\dim T$.  Replace $Z$ by this union and $\wt Z$ by its reduced
    preimage on $\wt T$. Then $Z$ is DB by induction on $\dim T$.  In the sequel we
    are only going to use one property of $Z$ that followed from being the conductor,
    namely the equality in (\ref{eq:8}). However, by
    \eqref{claim:overset-of-conductor} this remains true for the new choice of $Z$. %
  }%
  Next 
  let $\what Z=(\what\tau^{-1}Z)_{\red}\subset \what T$ be the reduced preimage of
  $Z$ (as well as of $\wt Z$) in $\what T$. The following diagram shows the
  connections between the various objects we have defined so far:

  $$
  \xymatrix{
    \save[]+<-3em,0em>*\txt<8pc>{%
      \tiny normal, admits \structure} \restore
    \ar@{.>}@/_{.75pc}/[]+<.5em,-.75em>;[r]+<-1.5em,-.75em> 
    &     \hskip.5em \coprod \what W  = \what T \ar@<2ex>[dd]_{\hat \tau} \ar[dr]
    \supset     \what Z
    & \save[]+<1.95em,0em>*\txt<6pc>{%
      \tiny preimage of \uj{$Z$}} \restore \ar@{.>}[l]
    \\ 
      \save[]+<-.75em,0em>*\txt<6pc>{%
      \tiny finite} \restore \ar@{.>}[r]
    && \wt T \ar[dl]_{\tau} &  \hskip -3em \supset \wt Z  
    & \save[]+<1.8em,0em>*\txt<6pc>{%
      \tiny preimage of \uj{$Z$}} \restore \ar@{.>}[]+<-.5em,-0.1em>;[l]+<-.5em,-0.1em>\\
    X \hskip-3em & \hskip-.6em \supseteq \bigcup \ol W = T \supset Z \ &
    \save[]+<6em,0em>*\txt<12pc>{%
      \tiny \uj{the smaller dimensional union of \strata containing the conductor of $T$}}
    \restore \ar@{.>}[l] \save[]+<3.75em,1.7em>*\txt<6pc>{%
      \tiny normalization of $T$} \ar@{.>}[]+<1em,1.75em>;[]+<-2.8em,1.75em> \restore
  }
  $$

  \ujj{%
    As we replaced the conductor with a union of \strata it was contained in and as}
  each $\what W$ admits a \structure compatible with the {part of the} \structure
  \ujj{of $X$ that lies in $T$, it follows that} $\what Z$ is also a union of qlc
  strata on $\what T$ and the morphism $\what Z\to \wt Z$ is a qlc stratified
  morphism. Then since $\dim \what T< \dim X$, by replacing $X$ with $\what T$ shows
  that $\what Z$ is DB by induction on $\dim X$. In turn this implies that $\wt Z$ is
  DB by \eqref{cor:finite-cover-is-ok}.

  Therefore, by now we have proved that $\wt T$, $Z$, and $\wt Z$ all have DB
  singularities, so by using \uj{\eqref{claim:overset-of-conductor}} and
  \eqref{thm:db-criterion} we conclude that $T$ is DB as well.

  Now assume that $X=T$ and hence $X=T=\wt T$. Let $f:(Y,\Delta)\to X$ be a
  \structure and $W=f(\nklt(Y,\Delta))$. By \eqref{lem:key-lemma} we may assume that
  $W\neq X$ by replacing $X$ by a finite cover. Note that by
  \eqref{cor:finite-cover-is-ok} it is enough to prove that this finite cover is DB.

  Then let $\pi:\wt {{Y}}\to {{Y}}$ be a log resolution and $F\leteq
  (f\circ\pi)^{-1}(W)$, an snc divisor.  By \eqref{splitting.thm} the natural map $
  \varrho: \sI_W=f_*\o_{{\wt Y}}(-F)\to \myR f_*\o_{{\wt Y}}(-F)$ has a left inverse.
  Finally, then \eqref{thm:db-criterion} implies that $T=X$ is DB.
\end{proof}

\begin{defini}
  \label{def:db-morphism}
  Let $\phi:X\to B$ be a flat morphism.
  We say that $\phi$ is a \emph{DB family} if $X_b$ is DB for all $b\in B$.
\end{defini}

\begin{defini}
  \label{def:flc-morphism}
  Let $\phi:X\to B$ be a flat morphism.
  We say that $\phi$ is a \emph{family with potentially lc fibers} if for all closed
  points $b\in B$ there exists an effective $\bQ$-divisor $D_b\subset X_b$ such that
  $(X_b, D_b)$ is log canonical.
\end{defini}

\begin{defini}[\protect{\cite[7.1]{KM98}}]
  \label{def:lc-morphism}
  Let $X$ be a normal variety, $D\subset X$ an effective $\bQ$-divisor such that
  $K_X+D$ is $\bQ$-Cartier, and $\phi:X\to B$ a non-constant morphism to a smooth
  curve $B$. We say that $\phi$ is a \emph{log canonical morphism} or an \emph{lc
    morphism} if $(X, D+X_b)$ is lc for all closed points $b\in B$.
\end{defini}

\begin{rem}\label{rem:lc-is-db}
  Notice that for a family with potentially lc fibers it is not required that the
  divisors $D_b$ also form a family over $B$. On the other hand, if $\phi:X\to B$ is
  a family with potentially lc fibers, $B$ is a smooth curve and there exists an
  effective $\bQ$-divisor such that $K_X+D$ is $\bQ$-Cartier and $D\resto {X_b}=D_b$
  then $\phi$ is an lc morphism by inversion of adjunction \cite{MR2264806}.
%

  Further observe that if $\phi:(X,D)\to B$ is an lc morphism, then for any $b\in B$,
  choosing $(Y,\Delta)= (X,D+X_b)$ and $f:(Y,\Delta)\to X$ the identity of $X$ gives
  an \stratification of $X$ such that $X_b$ is a union of \strata.  In particular, it
  follows by \eqref{thm:main-proof} that $X_b$ is DB.  Note that if $X_b$ is
  reducible, then \eqref{thm:main} would not suffice here.
\end{rem}

\begin{cor}
  Let $\phi:X\to B$ be either a family with potentially lc fibers or an lc morphism.
  Then $\phi$ is a DB family.
\end{cor}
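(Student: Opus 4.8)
The plan is to dispatch the two cases separately, in each instance reducing to a theorem already established; the corollary is essentially a repackaging of \eqref{thm:main-weak} together with the observation recorded in \eqref{rem:lc-is-db}, so no new idea is needed. Fix a closed point $b\in B$; since the Du~Bois property of a $\bC$-scheme of finite type is local and can be tested at closed points, it suffices to show that every such fiber $X_b$ is DB.

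Suppose first that $\phi$ is a family with potentially lc fibers. By definition there is an effective $\bQ$-divisor $D_b$ on $X_b$ with $(X_b,D_b)$ log canonical; in particular $(X_b,D_b)$ is an lc pair, so $X_b$ is DB by \eqref{thm:main-weak}. Now suppose instead that $\phi\colon(X,D)\to B$ is an lc morphism. Here I would follow the recipe indicated in \eqref{rem:lc-is-db}: put $(Y,\Delta):=(X,D+X_b)$, which is lc by hypothesis, and take $f=\id_X\colon(Y,\Delta)\to X$. Then $f_*\sO_Y=\sO_X$ and $K_Y+\Delta=K_X+D+X_b$ is $\bQ$-Cartier (being the sum of a $\bQ$-Cartier divisor and a fibre over a smooth curve), hence $K_Y+\Delta\sim_{f,\bQ}0$ trivially, so $f$ is a \structure on $X$ in the sense of \eqref{def:KK-morphism}. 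Every component of $X_b$ occurs in $\rdown{\Delta}$ with coefficient $1$ and is therefore an lc center of $(Y,\Delta)$, so $X_b$ is a union of \strata of the associated \stratification; being closed, \eqref{thm:main-proof} applies and shows that $X_b$ is DB.

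The one point that genuinely requires the qlc formalism above, rather than a one-line appeal to \eqref{thm:main}, is that $X_b$ may be reducible: \eqref{thm:main} only produces DB-ness for the (irreducible) image of a union of log canonical centers, whereas \eqref{thm:main-proof} is stated precisely for closures of unions of strata and so covers reducible fibers. This is the only place where any care is needed; apart from that bookkeeping there is nothing to prove, and passing from DB-ness of all closed fibers to the assertion that $\phi$ is a DB family is routine.
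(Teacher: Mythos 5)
Your argument is correct and is essentially identical to the paper's: the published proof is a one-line appeal to \eqref{thm:main-proof}, with the substance (the choice $(Y,\Delta)=(X,D+X_b)$, $f=\id_X$ for an lc morphism, and the observation that reducible fibers require \eqref{thm:main-proof} rather than \eqref{thm:main}) recorded verbatim in \eqref{rem:lc-is-db}. Your handling of the potentially-lc case via \eqref{thm:main-weak} is the same reduction, since \eqref{thm:main-weak} is itself a consequence of \eqref{thm:main-proof}.
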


\begin{proof}
  Follows directly from \eqref{thm:main-proof}.
\end{proof}

\section{Invariance of cohomology for DB morphisms}

\newcommand{\kdot}{\bdot}

\newcommand\omegai{h^{-i}(\clx{\omega}_{\phi})}

\newcommand\omegaib{h^{-i}(\clx{\omega}_{X_b})}
\newcommand\omegait{h^{-i}(\clx{\omega}_{\phi_T})}
\newcommand\varrhoib{\varrho^{-i}_b}
\newcommand\varrhoit{\varrho^{-i}_T}

\newcommand\omegaj{h^{-j}(\clx{\omega}_{\phi})}
\newcommand\omegajb{h^{-j}(\clx{\omega}_{X_b})}
\newcommand\omegajt{h^{-j}(\clx{\omega}_{\phi_T})}
\newcommand\varrhojb{\varrho^{-j}_b}
\newcommand\varrhojt{\varrho^{-j}_T}

\noindent
The following notation will be used throughout this section.

\begin{demo}{Notation}\label{not:coh-sheaves-of-omega}
  Let $\pi: \bP^N_B\to B$ be a projective $N$-space over $B$, $\iota:X\into \bP^N_B$
  a closed embedding, and $\phi\leteq\pi\circ\iota$. Further let $\sO_\pi(1)$ be a
  relatively ample line bundle \ujj{on $\bP^N_B$}, 
  denote by $\omega_{\phi}^\bdot$ the relative dualizing complex \ujjj{$\phi^!\sO_B$}
  and by $\omegai$ its $-i^\text{th}$ cohomology sheaf.  We will also use the
  notation $\omega_\phi\leteq h^{-n}(\clx{\omega}_\phi)$ where $n=\dim (X/B)$.
  Naturally these definitions automatically apply for $\pi$ in place of $\phi$ by
  choosing $\iota=\id_{\bP^N_B}$.

\end{demo}

\begin{lem}\label{lem:D-duality}
  Let $b\in B$.
  Then 
  $$
  \omegai\simeq \sExt^{N-i}_{\bP^N_B}(\sO_X,\omega_\pi)\quad\text{ and } \quad
  \omegaib\simeq \sExt^{N-i}_{\bP^N_b}(\sO_{X_b},\omega_{\bP^N_b}).  
  $$
  In particular, $\omegai=0$ and $\omegaib=0$ if $i<0$ or $i>N$.
\end{lem}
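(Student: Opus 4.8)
The plan is to exploit the factorization $\phi=\pi\circ\iota$ through the smooth morphism $\pi$ and compute $\omega_\phi^\bdot=\phi^{!}\sO_B$ by Grothendieck duality. Since $\pi:\bP^N_B\to B$ is smooth of relative dimension $N$, one has $\pi^{!}\sO_B\simeq\omega_\pi[N]$, where $\omega_\pi=\omega_{\bP^N_B/B}$ is the (invertible) relative canonical sheaf; and since $\iota:X\into\bP^N_B$ is a closed immersion, hence finite, the exceptional inverse image is given by $\iota^{!}(\sG^\bdot)\simeq\myR\sHom_{\sO_{\bP^N_B}}(\sO_X,\sG^\bdot)$, viewed in the derived category of $\sO_X$-modules. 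Composing these with $\phi^{!}=\iota^{!}\circ\pi^{!}$ gives
$$
\omega_\phi^\bdot\simeq\iota^{!}\bigl(\omega_\pi[N]\bigr)\simeq\myR\sHom_{\sO_{\bP^N_B}}(\sO_X,\omega_\pi)[N],
$$
and passing to the $(-i)^{\text{th}}$ cohomology sheaf yields $\omegai\simeq\sExt^{N-i}_{\bP^N_B}(\sO_X,\omega_\pi)$. The same argument applied to the structure morphism $\phi_b=\pi_b\circ\iota_b$ of the fiber — where $\pi_b:\bP^N_b\cong\bP^N_{\kappa(b)}\to\Spec\kappa(b)$ is smooth of dimension $N$, so $\pi_b^{!}\kappa(b)\simeq\omega_{\bP^N_b}[N]$, and $\iota_b:X_b\into\bP^N_b$ is closed — gives, for the dualizing complex $\omega_{X_b}^\bdot=\phi_b^{!}\kappa(b)$, the isomorphism $\omegaib\simeq\sExt^{N-i}_{\bP^N_b}(\sO_{X_b},\omega_{\bP^N_b})$.

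For the vanishing, note first that $\sExt^{j}$ of coherent sheaves vanishes for $j<0$, which already forces $\omegai=0$ and $\omegaib=0$ whenever $N-i<0$, i.e.\ for $i>N$. For $\omegaib$ with $i<0$: the scheme $\bP^N_b=\bP^N_{\kappa(b)}$ is regular of dimension $N$, so $\sO_{X_b}$ has a locally free resolution of length $\le N$ over $\sO_{\bP^N_b}$ and hence $\sExt^{N-i}_{\bP^N_b}(\sO_{X_b},\omega_{\bP^N_b})=0$ once $N-i>N$. The one remaining case, $\omegai=0$ for $i<0$, is where the flatness of $\phi$ over $B$ (in force throughout this section) is used: it amounts to showing $\sExt^{>N}_{\bP^N_B}(\sO_X,\omega_\pi)=0$, i.e.\ that $\sO_X$ has projective dimension $\le N$ at every point of $\bP^N_B$, which is local. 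Writing $R=\sO_{B,b}$, $S=\sO_{\bP^N_B,x}$ (smooth over $R$ of relative dimension $N$) and $A=\sO_{X,x}$ (a finitely generated, $R$-flat quotient of $S$) for $x\in X$ over $b\in B$, one checks that every syzygy in a finite free $S$-resolution of $A$ is again $R$-flat, so the resolution stays exact after $-\otimes_R\kappa(b)$; since $S\otimes_R\kappa(b)=\sO_{\bP^N_b,x}$ is regular of dimension $\le N$, the $N$-th syzygy becomes free modulo $\frm_b$, and a Nakayama-type argument (using $\frm_bS\subseteq\frm_S$) then shows the $N$-th syzygy over $S$ is itself free. Hence $\sExt^{>N}_{\bP^N_B}(\sO_X,\omega_\pi)=0$ and $\omegai=0$ for $i<0$.

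I expect this last point — the bound on the projective dimension of $\sO_X$ in $\bP^N_B$ — to be the only place requiring genuine care; everything else is formal Grothendieck duality for $\pi$ (smooth) and $\iota$ (finite), applied verbatim over $B$ and over $\Spec\kappa(b)$. An alternative route to that bound, avoiding the explicit syzygy computation, is to invoke flat base change for the relative dualizing complex, $\omega_\phi^\bdot\otimes^{\mathbf L}_{\sO_B}\kappa(b)\simeq\omega_{X_b}^\bdot$, together with the fact that the dualizing complex of the $n$-dimensional scheme $X_b$ is concentrated in cohomological degrees $[-n,0]$ for $n=\dim(X/B)$; comparing top cohomology sheaves and using right-exactness of $-\otimes_{\sO_B}\kappa(b)$ then forces $h^{j}(\omega_\phi^\bdot)=0$ for $j>0$, which is exactly $\omegai=0$ for $i<0$.
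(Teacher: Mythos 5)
Your argument is correct, and for most of the lemma it coincides with the paper's proof: the two isomorphisms are obtained exactly as in the paper by duality for the closed embedding into $\bP^N_B$ (resp.\ $\bP^N_b$) together with $\pi^!\sO_B\simeq\omega_\pi[N]$, the vanishing for $i>N$ is the trivial vanishing of negative $\sExt$'s, and the vanishing of $h^{-i}(\omega^\bdot_{X_b})$ for $i<0$ uses regularity of the local rings of $\bP^N_b$, again as in the paper. The genuine divergence is the last and only delicate step, $h^{-i}(\omega^\bdot_\phi)=0$ for $i<0$, i.e.\ $\sExt^{j}_{\bP^N_B}(\sO_X,\omega_\pi)=0$ for $j>N$. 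The paper gets this softly from Altman--Kleiman: by \cite[1.8]{MR555258} there is a comparison map $\varrho^{-i}_b:\sExt^{N-i}_{\bP^N_B}(\sO_X,\omega_\pi)\resto{X_b}\to\sExt^{N-i}_{\bP^N_b}(\sO_{X_b},\omega_{\bP^N_b})$, which for $i<0$ is surjective because its target vanishes, hence an isomorphism by \cite[1.9]{MR555258} (this is where flatness of $\phi$ enters), and since this holds at every $b$ the source vanishes by Nakayama. You instead bound the projective dimension of $\sO_X$ over $\sO_{\bP^N_B}$ directly: flatness over $B$ makes all syzygies of a local free resolution $\sO_{B,b}$-flat, the resolution specializes to one over the regular local ring $\sO_{\bP^N_b,x}$ of dimension $\le N$, and the Nakayama argument with $\frm_b\sO_{\bP^N_B,x}\subseteq\frm_x$ lifts freeness of the $N$-th syzygy from the fiber; this is a correct and essentially self-contained form of the standard fiberwise flatness/projective-dimension criterion, and it makes transparent that flatness is used only here. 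What the paper's route buys is that the comparison maps $\varrho^{-i}_b$ it introduces are reused immediately afterwards in the inductive proof of (\ref{thm:coh-base-change}.\ref{item:12}), so the citation does double duty; your alternative sketch via $\omega^\bdot_\phi\otimes^{\mathbf L}_{\sO_B}\kappa(b)\simeq\omega^\bdot_{X_b}$ and right-exactness of the top cohomology is also valid, but it presupposes base change for the relative dualizing complex, which the paper prefers to deduce rather than assume.
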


\begin{proof}
  By Grothendieck duality (\cite[VII.3.3]{MR0222093}, cf.\
  \cite[III.7.5]{Hartshorne77}),
  \begin{equation*}
    \omegai\simeq  
    h^{-i}(\myR\sHom_{\bP^N_B}(\sO_X,\omega_\pi^\bdot)) \simeq 
    h^{-i}(\myR\sHom_{\bP^N_B}(\sO_X,\omega_\pi)[N]) \simeq
    \sExt^{N-i}_{\bP^N_B}(\sO_X,\omega_\pi).
  \end{equation*}
  The same argument obviously implies the equivalent statement for $\omegaib$.

  Furthermore, clearly $\sExt^{j}_{\bP^N_B}(\sO_X,\omega_\pi)=0$ and
  $\sExt^{j}_{\bP^N_b}(\sO_{X_b},\omega_{\bP^N_b})=0$ if $j<0$, and hence $\omegai=0$
  and $\omegaib=0$ if $i>N$. Since $\bP^N_b$ is smooth and thus all the local rings
  are regular, it also follows that
  $\sExt^{j}_{\bP^N_b}(\sO_{X_b},\omega_{\bP^N_b})=0$ if $j>N$, and hence
  $\omegaib=0$ if $i<0$.

  Next, consider the restriction map \cite[1.8]{MR555258},
  $$
  \varrhoib: \sExt^{N-i}_{\bP^N_B}(\sO_X,\omega_\pi)\resto{X_b} \to
  \sExt^{N-i}_{\bP^N_b}(\sO_{X_b},\omega_{\bP^N_b}).
  $$
  We have just observed that the target of the map is $0$ if $i<0$. In particular,
  $\varrhoib$ is surjective in that range. Then by \cite[1.9]{MR555258} $\varrhoib$ is an
  isomorphism and therefore $\omegai=0$ if $i<0$.
\end{proof}

\begin{lem}\label{item:2}
  Let $\sF$ be a coherent sheaf on $X$, \ujj{$i\in\bN$}, and assume that
  $\myR^i\pi_*(\sF(-q))$ is locally free for $q\gg 0$.  Then
  $$
  \qquad
  \pi_*\sExt^{N-i}_{\bP^N_B}(\sF, \omega_{\pi}(q))\simeq
  \sHom_B(\myR^i\pi_*(\sF({-q})), \sO_B) \qquad\text{ for $q\gg 0$}.
  $$
\end{lem}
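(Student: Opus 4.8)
The strategy is to deduce this from relative Grothendieck duality for the smooth projective morphism $\pi\colon\bP^N_B\to B$ together with Serre vanishing, in the spirit of the proof of \eqref{lem:D-duality}. By Grothendieck duality (\cite[VII.3.3]{MR0222093}, cf.\ \cite[III.7.5]{Hartshorne77}) applied to $\pi$, and using that $\pi$ is smooth of relative dimension $N$, so that $\pi^{!}\sO_B\simeq\omega_\pi[N]$, one has a functorial isomorphism in the derived category of $B$,
$$
\myR\sHom_B\bigl(\myR\pi_*(\sF(-q)),\sO_B\bigr)\simeq\myR\pi_*\myR\sHom_{\bP^N_B}\bigl(\sF(-q),\omega_\pi[N]\bigr)\simeq\bigl(\myR\pi_*\myR\sHom_{\bP^N_B}(\sF,\omega_\pi(q))\bigr)[N].
$$
For $q\gg 0$ the twisted sheaves $\sExt^b_{\bP^N_B}(\sF,\omega_\pi)(q)$ are $\pi_*$-acyclic, so $h^k\bigl(\myR\pi_*\myR\sHom_{\bP^N_B}(\sF,\omega_\pi(q))\bigr)\simeq\pi_*\sExt^k_{\bP^N_B}(\sF,\omega_\pi(q))$, which is locally free. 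Taking $(-i)$-th cohomology of the display therefore gives, for $q\gg 0$,
$$
h^{-i}\bigl(\myR\sHom_B(\myR\pi_*(\sF(-q)),\sO_B)\bigr)\simeq\pi_*\sExt^{N-i}_{\bP^N_B}\bigl(\sF,\omega_\pi(q)\bigr),
$$
so the whole point becomes the identification of the left hand side with $\sHom_B(\myR^i\pi_*(\sF(-q)),\sO_B)$.

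For that I would set $C^\bdot\leteq\myR\pi_*(\sF(-q))$, which has cohomology concentrated in degrees $[0,\dim(X/B)]$, and analyse $\myR\sHom_B(C^\bdot,\sO_B)$ by truncation. From the triangle $\tau_{<i}C^\bdot\to C^\bdot\to\tau_{\ge i}C^\bdot$ and the fact that $\myR\sHom_B(\tau_{<i}C^\bdot,\sO_B)$ has no cohomology in degrees $\le -i$, one gets $h^{-i}(\myR\sHom_B(C^\bdot,\sO_B))\simeq h^{-i}(\myR\sHom_B(\tau_{\ge i}C^\bdot,\sO_B))$; and the triangle $\myR^i\pi_*(\sF(-q))[-i]\to\tau_{\ge i}C^\bdot\to\tau_{>i}C^\bdot$, combined with the hypothesis that $\myR^i\pi_*(\sF(-q))$ is locally free — whence $\myR\sHom_B(\myR^i\pi_*(\sF(-q))[-i],\sO_B)\simeq\sHom_B(\myR^i\pi_*(\sF(-q)),\sO_B)[i]$ — reduces the lemma to the vanishing, for $q\gg 0$, of $h^{-i}$ and $h^{1-i}$ of $\myR\sHom_B(\tau_{>i}C^\bdot,\sO_B)$, i.e.\ of $\sExt^p_B\bigl(\myR^{j}\pi_*(\sF(-q)),\sO_B\bigr)$ for all $p\ge 1$ and $j>i$. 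This is the one genuinely delicate point, and I expect it to be the main obstacle: since only the $i$-th direct image is assumed locally free, the vanishing needs extra input — either one transports the higher direct images $\myR^{>i}\pi_*(\sF(-q))$ back through relative duality, where they correspond to the sheaves $\sExt^{<N-i}_{\bP^N_B}(\sF,\omega_\pi(q))$ that the large twist annihilates under $\pi_*$, or one appeals to vanishing of $\myR^{>i}\pi_*(\sF(-q))$ available in the situations where the lemma gets applied.

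A cleaner route worth trying first is to produce the natural map directly: the trace pairing $\pi_*\sExt^{N-i}_{\bP^N_B}(\sF,\omega_\pi(q))\otimes\myR^i\pi_*(\sF(-q))\to\myR^N\pi_*\omega_\pi\simeq\sO_B$ yields $\theta_q\colon\pi_*\sExt^{N-i}_{\bP^N_B}(\sF,\omega_\pi(q))\to\sHom_B(\myR^i\pi_*(\sF(-q)),\sO_B)$, and one checks that $\theta_q$ is an isomorphism for $q\gg 0$ by restricting to each residue field $\kappa(b)$: for $q\gg 0$ the source commutes with base change by Serre vanishing, the hypothesis that $\myR^i\pi_*(\sF(-q))$ is locally free makes the target commute with base change and specialize to $H^i(\bP^N_b,\sF_b(-q))^{\vee}$, and matching the two fibres is relative duality on $\bP^N_b$, precisely as in the proof of \eqref{lem:D-duality} via \cite[1.8]{MR555258} and \cite[1.9]{MR555258}. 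The subtlety in this version is checking that the restriction maps of \cite[1.8]{MR555258} are compatible with $\theta_q$ (and a base-change comparison for the relative $\sExt$-sheaf on the source side); granting that, both sides are locally free of the same rank and the fibrewise isomorphism concludes the argument.
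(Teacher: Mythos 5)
Your proposal correctly places the lemma inside duality for $\bP^N_B$ plus Serre vanishing, which is indeed the paper's circle of ideas, but neither of your two routes is actually completed, and the remaining gaps are genuine. The paper's own proof is much shorter and entirely local: restrict to an affine open $U\subseteq B$ on which $\myR^i\pi_*(\sF(-q))$ is free, identify $H^0\bigl(\pi^{-1}(U),\sExt^{N-i}_{\bP^N_B}(\sF,\omega_\pi(q))\bigr)$ with the global $\Ext^{N-i}_{\bP^N_U}(\sF_U(-q),\omega_{\pi_U})$ for $q\gg0$ via \cite[III.6.7]{Hartshorne77}, and then invoke duality for projective space over a ring \cite[III.5.2]{MR0222093} to convert this into $\Hom_U(\myR^i\pi_*(\sF(-q))\resto{U},\sO_U)$. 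The entire content of what you call ``the one genuinely delicate point'' is thus delegated to the cited duality theorem; your route 1 is an attempt to re-derive that theorem from the derived-category statement, and it stalls exactly where the citation does the work, namely at the vanishing of $\sExt^{p}_B(\myR^{j}\pi_*(\sF(-q)),\sO_B)$ for $p\geq1$, $j>i$.

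Neither of your suggested repairs closes that hole. The sheaves $\myR^{j}\pi_*(\sF(-q))$ for $j>i$ do \emph{not} vanish for $q\gg0$ (already for $\sF=\sO_X$ the top direct image $\myR^{\dim(X/B)}\pi_*(\sO_X(-q))$ grows with $q$), so the second fix silently strengthens the hypotheses. The first fix is circular --- the ``correspondence'' of $\myR^{j}\pi_*(\sF(-q))$ with $\pi_*\sExt^{N-j}_{\bP^N_B}(\sF,\omega_\pi(q))$ is the lemma itself for other indices --- and in any case a large twist kills $\myR^{>0}\pi_*$, not $\pi_*$, so $\pi_*\sExt^{<N-i}_{\bP^N_B}(\sF,\omega_\pi(q))$ is not annihilated; even granting the correspondence it would say nothing about $\sExt^{\geq1}_B$ of those modules. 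Route 2 contains a separate unjustified step: local freeness of $\myR^i\pi_*(\sF(-q))$ does \emph{not} by itself imply that this sheaf commutes with base change (in cohomology-and-base-change, local freeness of $\myR^i$ controls the comparison map in degree $i-1$, and only after the degree-$i$ comparison map is known to be surjective), so the identification of the fibre of $\sHom_B(\myR^i\pi_*(\sF(-q)),\sO_B)$ with $H^i(\bP^N_b,\sF_b(-q))^{\vee}$ is not available from the stated hypothesis. In short: right toolbox, but no complete proof; the efficient path is the paper's local argument via \cite[III.6.7]{Hartshorne77} and \cite[III.5.2]{MR0222093}.
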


\begin{demo*}{\it Proof}
  Let $q\gg 0$ and $U\subseteq B$ an affine open set such that
  $\myR^i\pi_*(\sF(-q))\resto{U}$ is free.  Then by \cite[III.6.7]{Hartshorne77} and
  \cite[II\ujjj{I}.5.2]{MR0222093},
  \begin{multline*}
    H^0(\pi^{-1}(U), \sExt^{N-i}_{\bP^N_B}(\sF, \omega_{\pi}(q)))\simeq
    \Ext^{N-i}_{\bP^N_U}(\sF_U(-q), \omega_{\pi_U})\simeq \\
    \simeq \Hom_U(\myR^i\pi_*\sF(-q)\resto U, \sO_U)
    \simeq H^0(U, \sHom_B(\myR^i\pi_*\sF(-q), \sO_B)).\qed
  \end{multline*}
\end{demo*}

\noindent
\ujjj{%
  The following statement and its consequences will be needed in the proof of
  \eqref{thm:coh-base-change}.  It is likely known to experts, but we could not find
  an appropriate reference.  }

\ujj{%
  \begin{lem}\label{lem:meta-DB-cover}
    Let $Z$ be a complex scheme of finite type and $\phi_\bdot:Z_\bdot\to Z$ a
    hyperresolution.  Let $\pi:W\to Z$ a morphism such that $\psi_\bdot: W_\bdot
    \leteq W\times_Z Z_\bdot\to W$ is also a hyperresolution. Let $\pi_i:W_i\to Z_i$
    be the morphisms induced by $\pi$ and assume that the natural transformation
    $\myL\pi^*\myR{\phi_\bdot}_*\to\myR{\psi_\bdot}_*\myL\pi_\bdot^*$ induces an
    isomorphism 
    $$
    \myL\pi^*\myR{\phi_\bdot}_*\sO_{Z_\bdot}\simeq
    \myR{\psi_\bdot}_*\myL\pi_\bdot^*\sO_{Z_\bdot}. 
    $$
    Then
    $$\myL\pi^*\Om_Z^0\simeq \Om^0_W.$$
    In particular, if $Z$ has only DB singularities then $W$ has only DB
    singularities.
  \end{lem}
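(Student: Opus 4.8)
The plan is to reduce the statement to the standard description of the zeroth piece of the Deligne--Du~Bois complex through hyperresolutions. Recall that for \emph{any} hyperresolution $\varepsilon_\bdot:V_\bdot\to V$ of a complex scheme $V$ of finite type there is a canonical isomorphism $\Om_V^0\simeq \myR{\varepsilon_\bdot}_*\sO_{V_\bdot}$ in the derived category of $\sO_V$-modules, and under this isomorphism the adjunction unit $\sO_V\to \myR{\varepsilon_\bdot}_*\sO_{V_\bdot}$ corresponds to the canonical map $\sO_V\to\Om_V^0$; this is built into the construction of $\Om_V^\bdot$, see \cite[II.7.3]{MR2393625} and \cite{DuBois81}. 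First I would apply this to $\phi_\bdot:Z_\bdot\to Z$ to get $\Om_Z^0\simeq \myR{\phi_\bdot}_*\sO_{Z_\bdot}$, and to $\psi_\bdot:W_\bdot\to W$ --- which is a hyperresolution by hypothesis --- to get $\Om_W^0\simeq \myR{\psi_\bdot}_*\sO_{W_\bdot}$, both compatibly with the maps out of the structure sheaves.

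Next I would use the elementary observation that derived pullback of a structure sheaf is again a structure sheaf: for each $n$ the projection $\pi_n:W_n=W\times_Z Z_n\to Z_n$ satisfies $\myL\pi_n^*\sO_{Z_n}=\sO_{W_n}$, as $\sO_{Z_n}$ is a free $\sO_{Z_n}$-module of rank one; hence $\myL\pi_\bdot^*\sO_{Z_\bdot}=\sO_{W_\bdot}$ and therefore $\myR{\psi_\bdot}_*\myL\pi_\bdot^*\sO_{Z_\bdot}\simeq\Om_W^0$. Combining this with the identification $\myL\pi^*\myR{\phi_\bdot}_*\sO_{Z_\bdot}\simeq\myL\pi^*\Om_Z^0$ obtained by applying $\myL\pi^*$ to the first paragraph, the hypothesis $\myL\pi^*\myR{\phi_\bdot}_*\sO_{Z_\bdot}\simeq\myR{\psi_\bdot}_*\myL\pi_\bdot^*\sO_{Z_\bdot}$ immediately yields $\myL\pi^*\Om_Z^0\simeq\Om_W^0$. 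I would then check --- this is a routine compatibility of the base change transformation $\myL\pi^*\myR{\phi_\bdot}_*\to\myR{\psi_\bdot}_*\myL\pi_\bdot^*$ with the adjunction units and with the canonical identification $\myL\pi^*\sO_Z=\sO_W$ --- that this isomorphism carries the canonical map $\myL\pi^*(\sO_Z\to\Om_Z^0)$ to the canonical map $\sO_W\to\Om_W^0$.

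For the final assertion, suppose $Z$ has only DB singularities, so that the canonical map $\sO_Z\to\Om_Z^0$ is an isomorphism in the derived category. Applying $\myL\pi^*$ and invoking the compatibility above together with $\myL\pi^*\sO_Z=\sO_W$, the canonical map $\sO_W\to\Om_W^0$ is identified with $\myL\pi^*$ of an isomorphism, hence is itself an isomorphism, so $W$ has only DB singularities.

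The only genuinely non-formal ingredient --- and the step I expect to be the main thing to pin down carefully --- is the hyperresolution description $\Om_V^0\simeq \myR{\varepsilon_\bdot}_*\sO_{V_\bdot}$ valid for an \emph{arbitrary} hyperresolution, together with the need to track its compatibility with the map $\sO_V\to\Om_V^0$; everything else is bookkeeping with derived functors and the triviality $\myL\pi^*\sO=\sO$. Care is needed precisely so that the abstract isomorphism $\myL\pi^*\Om_Z^0\simeq\Om_W^0$ is the one induced by the canonical maps, which is what the DB conclusion requires.
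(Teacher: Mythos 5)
Your proposal is correct and follows essentially the same route as the paper: identify $\Om_Z^0\simeq \myR{\phi_\bdot}_*\sO_{Z_\bdot}$ and $\Om_W^0\simeq \myR{\psi_\bdot}_*\sO_{W_\bdot}$, note $\myL\pi_\bdot^*\sO_{Z_\bdot}\simeq\sO_{W_\bdot}$, and chain these through the assumed base-change isomorphism. Your extra care about compatibility of the resulting isomorphism with the canonical maps $\sO\to\Om^0$ (needed for the DB conclusion) is left implicit in the paper but is a welcome precision, not a different argument.
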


  \medskip

  \begin{subrem}
    See \cite{DuBois81,GNPP88, MR2393625, Kovacs-Schwede09} for details on
    hyperresolutions.
  \end{subrem}

  \medskip

  \begin{cor}\label{cor:smooth-fiberspace-is-DB}
    Let $Z$ be a complex scheme of finite type with only DB singularities and $\wt
    Z\to Z$ a smooth morphism.  Then $\wt Z$ has DB singularities. \qed
  \end{cor}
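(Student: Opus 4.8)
The plan is to obtain this as an immediate consequence of \eqref{lem:meta-DB-cover}, with $W=\wt Z$ and $\pi\colon \wt Z\to Z$ the given smooth morphism. Since being DB is a local property and since a hyperresolution of $Z$ restricts to one of any open subscheme, I may replace $Z$ by the open subscheme $\pi(\wt Z)$ and thereby assume $\pi$ is faithfully flat. Fix a hyperresolution $\phi_\bdot\colon Z_\bdot\to Z$; it then suffices to check the two hypotheses of \eqref{lem:meta-DB-cover}, namely that $\psi_\bdot\colon W_\bdot\leteq \wt Z\times_Z Z_\bdot\to\wt Z$ is again a hyperresolution and that the natural map
$$
\myL\pi^*\myR{\phi_\bdot}_*\sO_{Z_\bdot}\longrightarrow \myR{\psi_\bdot}_*\myL\pi_\bdot^*\sO_{Z_\bdot}
$$
is an isomorphism.

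The base-change isomorphism is routine. A smooth morphism is flat, so $\myL\pi^*=\pi^*$ and each $\myL\pi_i^*=\pi_i^*$ is exact; for every Cartesian square relating $\psi_i\colon W_i\to\wt Z$, $\pi\colon\wt Z\to Z$, $\pi_i\colon W_i\to Z_i$ and $\phi_i\colon Z_i\to Z$, flat base change gives $\pi^*\myR{\phi_i}_*\sO_{Z_i}\simeq\myR{\psi_i}_*\sO_{W_i}$. These isomorphisms are compatible with the structure maps of the (cubical) diagram, so passing to total complexes yields $\pi^*\myR{\phi_\bdot}_*\sO_{Z_\bdot}\simeq\myR{\psi_\bdot}_*\sO_{W_\bdot}=\myR{\psi_\bdot}_*\myL\pi_\bdot^*\sO_{Z_\bdot}$.

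It remains to verify that $\psi_\bdot$ is a hyperresolution. Each term $W_i=\wt Z\times_Z Z_i$ is smooth over the smooth scheme $Z_i$, hence smooth, so all terms of $W_\bdot$ are smooth. One then has to see that the inductive conditions on discriminants in the definition of a (cubical) hyperresolution are preserved under pullback along $\pi$: this uses that $\pi$ is flat, so that forming preimages commutes with taking the relevant strata and preserves codimensions, and that $\pi$ is open and surjective, so that the preimage of a dense open subset of $Z$ is dense in $\wt Z$ and the preimage of a nowhere-dense closed subset is again nowhere dense. Granting this, \eqref{lem:meta-DB-cover} applies and yields $\myL\pi^*\Om^0_Z\simeq\Om^0_{\wt Z}$; since $Z$ is DB we have $\sO_Z\simeq\Om^0_Z$, and applying the exact functor $\pi^*$ gives $\sO_{\wt Z}\simeq\pi^*\Om^0_Z\simeq\Om^0_{\wt Z}$, so $\wt Z$ is DB. The main obstacle I anticipate is precisely this last verification — the bookkeeping confirming that the dimension-theoretic axioms defining a hyperresolution survive flat base change along a smooth morphism; the base-change isomorphism itself and the final deduction are formal.
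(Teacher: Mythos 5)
Your proposal is correct and is exactly the route the paper intends: the corollary is stated as an immediate consequence of \eqref{lem:meta-DB-cover}, and your verification (flat base change for the isomorphism $\myL\pi^*\myR{\phi_\bdot}_*\sO_{Z_\bdot}\simeq\myR{\psi_\bdot}_*\myL\pi_\bdot^*\sO_{Z_\bdot}$, plus the check that pulling back a hyperresolution along a smooth morphism yields a hyperresolution) is precisely the detail the paper leaves to the reader. The bookkeeping you flag as the main obstacle does go through as you describe, since smooth morphisms are flat, open, and of constant relative dimension, so the smoothness of the terms and the dimension conditions on discriminants are preserved.
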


  \begin{cor}\label{cor:hypersurface-is-DB}
    Let $Z$ be a complex scheme of finite type with only DB singularities and
    $H\subseteq Z$ a general member of a basepoint free linear system. Then $H$ has
    DB singularities.  \qed
  \end{cor}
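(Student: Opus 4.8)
The plan is to deduce this from the meta-lemma \eqref{lem:meta-DB-cover}, taking $W=H$ and $\pi=\iota\colon H\hookrightarrow Z$ the closed embedding. First I would fix a cubical hyperresolution $\phi_\bdot\colon Z_\bdot\to Z$; recall that each $Z_i$ is smooth and that there are only finitely many of them. Pulling the given basepoint free linear system $|L|$ back along each $\phi_i\colon Z_i\to Z$ yields a basepoint free linear system on $Z_i$, because its base locus lies over the empty base locus of $|L|$. Hence by Bertini a sufficiently general member $H\in|L|$ simultaneously has the property that, for every $i$, the scheme-theoretic preimage $H_i\leteq\phi_i^{-1}(H)$ is smooth and is an effective Cartier divisor on $Z_i$; in particular $\myL\iota_i^*\sO_{Z_i}\simeq\sO_{H_i}$ and $\myL\iota^*\sO_Z\simeq\sO_H$.

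It then remains to check the two hypotheses of \eqref{lem:meta-DB-cover}. For the first, that $\psi_\bdot\colon H_\bdot\leteq H\times_Z Z_\bdot\to H$ is again a cubical hyperresolution: this is the Bertini principle that a general member of a basepoint free linear system cuts a cubical hyperresolution down to a cubical hyperresolution of the corresponding section, and I would cite \cite{GNPP88} (or \cite{Kovacs-Schwede09}); concretely, a general $H$ is transverse to each of the finitely many closed sets entering the cubical construction, so all the required smoothness and the descent incidences are preserved. For the second hypothesis, that the natural base change map $\myL\iota^*\myR{\phi_\bdot}_*\sO_{Z_\bdot}\to\myR{\psi_\bdot}_*\myL\iota_\bdot^*\sO_{Z_\bdot}$ is an isomorphism: since $H$ is Cartier, $\myL\iota^*$ is computed by $(-)\otimes_{\sO_Z}^{\myL}[\sO_Z(-H)\to\sO_Z]$, and likewise upstairs with $[\sO_{Z_i}(-H_i)\to\sO_{Z_i}]$, so the map is an isomorphism as soon as a local equation of $H$ is a non-zero-divisor on each of the finitely many coherent cohomology sheaves $h^j(\myR{\phi_\bdot}_*\sO_{Z_\bdot})=h^j(\Om^0_Z)$, which again holds for $H$ general in a basepoint free system.

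Granting this, \eqref{lem:meta-DB-cover} gives $\myL\iota^*\Om^0_Z\simeq\Om^0_H$, and since $Z$ is DB we have $\Om^0_Z\simeq\sO_Z$, whence $\Om^0_H\simeq\myL\iota^*\sO_Z\simeq\sO_H$, i.e., $H$ is DB. The step I expect to require genuine work is the first hypothesis above, namely the Bertini bookkeeping that restricting a cubical hyperresolution to a general member of a basepoint free linear system again produces a cubical hyperresolution of that member; once this is in hand, both the construction of the $H_i$ and the base change isomorphism are formal.
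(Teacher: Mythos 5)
Your proposal is exactly the argument the paper intends: the corollary is stated with an immediate \qed as a direct consequence of \eqref{lem:meta-DB-cover}, applied to the inclusion $\iota\colon H\hookrightarrow Z$ of a general member, with the two hypotheses checked by Bertini (smoothness and Cartier-ness of the $H_i$, hence a hyperresolution of $H$) and by the Koszul resolution $[\sO_Z(-H)\to\sO_Z]$ of $\sO_H$ together with the projection formula (the base-change isomorphism). Your verification is correct and simply supplies the routine details the authors leave to the reader.
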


  \begin{cor}\label{thm:cyclic-cover-of-DB-is-DB}
    Let $Z$ be a complex scheme of finite type with only DB singularities and $\sM$ a
    semi-ample line bundle on $Z$. Let $\pi:W\to Z$ be the cyclic cover associated to
    a general section of $\sM^m$ for some $m\gg 0$ cf.\ \cite[2.50]{KM98}. Then $W$
    has only DB singularities.
  \end{cor}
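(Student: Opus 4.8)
The plan is to realise $W$ as a general member of a basepoint free linear system on a smooth fibre space over $Z$ and then to quote \eqref{cor:smooth-fiberspace-is-DB} and \eqref{cor:hypersurface-is-DB}.

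Since $\sM$ is semi-ample, I would first fix $m\gg 0$ so that $\sM^m$ is globally generated, let $s\in H^0(Z,\sM^m)$ be a general section, and let $\pi:W\to Z$ be the cyclic cover associated to $s$ as in \cite[2.50]{KM98}. Let $p:\bL\to Z$ be the total space of the line bundle whose sheaf of sections is $\sM$, i.e.\ $\bL=\mathbf{Spec}_Z\bigl(\bigoplus_{j\ge 0}\sM^{-j}\bigr)$, and let $\tau\in H^0(\bL,p^*\sM)$ be the tautological section, corresponding to the inclusion of the weight-one summand $\sM^{-1}\subseteq p_*\sO_{\bL}$. A short computation with the graded algebra $\bigoplus_j\sM^{-j}$ and the map $p^*\sM^{-m}\xrightarrow{\ \tau^m-p^*s\ }\sO_{\bL}$ (locally this is just the hypersurface $\{t^m=s\}$; cf.\ \cite[2.50]{KM98}) identifies $W$ with the divisor $V(\tau^m-p^*s)\subseteq\bL$.

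Now $p:\bL\to Z$ is a smooth morphism, so $\bL$ has only DB singularities by \eqref{cor:smooth-fiberspace-is-DB}. Next I would consider the linear system $\Lambda$ on $\bL$ consisting of the divisors $V\bigl(a\,\tau^m+p^*s'\bigr)$ for $a\in\bC$ and $s'\in H^0(Z,\sM^m)$, not both zero. Its base locus is contained in $\{\tau=0\}\cap p^{-1}\bigl(\mathrm{Bs}\,|\sM^m|\bigr)$; since $\{\tau=0\}$ is the zero section of $p$ and $\sM^m$ is globally generated, this set is empty, so $\Lambda$ is basepoint free. The locus $\{a=0\}$ is a hyperplane in the parameter space of $\Lambda$, so a general member of $\Lambda$ has $a\neq 0$ and hence, after rescaling, is of the form $V(\tau^m-p^*s')$ with $s'\in H^0(Z,\sM^m)$ general; thus for general $s$ the cyclic cover $W=V(\tau^m-p^*s)$ is a general member of $\Lambda$. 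Therefore $W$ is DB by \eqref{cor:hypersurface-is-DB}.

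The only slightly delicate points are the two identifications used above: that $V(\tau^m-p^*s)$ with exactly this twist really is the cyclic cover $W$, and that $W$ occurs as a \emph{general} member of $\Lambda$ rather than merely as some member of it; both are routine once set up correctly. An alternative that avoids the embedding into $\bL$ is to apply \eqref{lem:meta-DB-cover} to $\pi:W\to Z$ directly: for general $s$ and a fixed hyperresolution $\phi_\bdot:Z_\bdot\to Z$, Bertini's theorem in characteristic $0$ makes each branch divisor $\phi_i^{-1}\bigl(V(s)\bigr)\subset Z_i$ smooth, so each $W\times_Z Z_i$ is a smooth cyclic cover and $W\times_Z Z_\bdot\to W$ is again a hyperresolution, while the base change isomorphism required by \eqref{lem:meta-DB-cover} follows from flatness of $\pi$; on that route I would expect the bookkeeping with reducedness of the base-changed cubical scheme to be the main nuisance.
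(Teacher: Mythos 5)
Your proposal is correct and is essentially the paper's own argument: the paper likewise observes that the total space $M$ of $\sM$ is DB by \eqref{cor:smooth-fiberspace-is-DB} and then realizes $W\subseteq M$ as (a general member of a basepoint-free linear system, via \cite[9.4]{Kollar95s}) to conclude, and it also mentions, as you do, the alternative of verifying the hypotheses of \eqref{lem:meta-DB-cover} for $\pi$ directly. You have merely filled in the identification $W=V(\tau^m-p^*s)$ and the base-point-freeness of the system $\{a\tau^m+p^*s'\}$, both of which check out.
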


  \begin{proof}
    One can easily prove that $\pi$ satisfies the conditions of
    \eqref{lem:meta-DB-cover} or argue as follows: By
    \eqref{cor:smooth-fiberspace-is-DB} the total space $M$ of $\sM$ has DB
    singularities and then the statement follows by \eqref{lem:meta-DB-cover}
    applied to the embedding $W\subseteq M$ \cite[9.4]{Kollar95s}.
  \end{proof}

  \begin{proof}[Proof of \eqref{lem:meta-DB-cover}] 
    The hyperresolutions $\phi_\bdot$ and $\psi_\bdot$ fit into the commutative
    diagram:
    $$
    \xymatrix{%
      Z_\bdot \ar[d]_{\phi_\bdot} &
      W_\bdot \ar[d]^{\psi_\bdot} \ar[l]_{\pi_\bdot}  \\
      Z & W \ar[l]^\pi, }
    $$
    We also obtain the following representations of the Deligne-Du~Bois complexes of
    $Z$ and $W$:
    \begin{equation*}
      \Om_Z^0\simeq \myR{\phi_\bdot}_*\sO_{Z_\bdot} \quad\text{ and }\quad
      \Om_W^0\simeq \myR{\psi_\bdot}_*\sO_{W_\bdot}.
    \end{equation*}
    Then by assumption
    $$
    \myL\pi^*\Om_Z^0\simeq \myL\pi^*\myR{\phi_\bdot}_*\sO_{Z_\bdot}\simeq
    \myR{\psi_\bdot}_*\myL\pi_\bdot^*\sO_{Z_\bdot}\simeq
    \myR{\psi_\bdot}_*\sO_{W_\bdot}\simeq \Om_W^0.
    $$
  \end{proof}

}

\noindent
We will also need the base-change theorem of Du~Bois and Jarraud
\cite[Th\'eor\`eme]{MR0376678} (cf.\ \cite[4.6]{DuBois81}):

\begin{thm}\label{thm:coh-invariance}
  Let $\phi:X\to B$ be a projective {DB family}. Then $\myR^i\phi_*\sO_{X}$ is locally
  free of finite rank and compatible with arbitrary base change for all $i$. \qed
\end{thm}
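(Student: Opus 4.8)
The plan is to deduce this from Grothendieck's theorem on cohomology and base change: it suffices to prove that for every $i$ and every closed point $b\in B$ the natural base-change map
$$
\varphi^{i}_{b}\colon \myR^{i}\phi_{*}\sO_{X}\otimes\kappa(b)\longrightarrow H^{i}(X_{b},\sO_{X_{b}})
$$
is surjective. Indeed, if all the $\varphi^{i}_{b}$ are surjective, then each one is an isomorphism in a neighbourhood of $b$, and applying the criterion once more with the (likewise surjective) map $\varphi^{i-1}_{b}$ shows that $\myR^{i}\phi_{*}\sO_{X}$ is locally free of finite rank near $b$ for every $i$, which is the assertion. Since the statement is local on $B$ and $K$ is algebraically closed of characteristic zero, I would first reduce to $K=\bC$ by faithfully flat base change, which preserves the higher direct images, the local-freeness statement, and the Du~Bois hypothesis on the fibres. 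Then $B$ is of finite type over $\bC$ and each closed fibre $X_{b}$ is a proper $\bC$-scheme with Du~Bois singularities, so analytic techniques are available.

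To prove surjectivity of $\varphi^{i}_{b}$ I would pass to the analytic category and exploit the constant sheaf. Let $\phi^{\rm an}\colon X^{\rm an}\to B^{\rm an}$ be the analytification. It is proper, so topological proper base change gives $(\myR^{i}\phi^{\rm an}_{*}\bC_{X^{\rm an}})_{b}\cong H^{i}(X_{b}^{\rm an},\bC)$, while relative GAGA gives $(\myR^{i}\phi_{*}\sO_{X})^{\rm an}\cong\myR^{i}\phi^{\rm an}_{*}\sO_{X^{\rm an}}$ and $H^{i}(X_{b}^{\rm an},\sO_{X_{b}^{\rm an}})\cong H^{i}(X_{b},\sO_{X_{b}})$. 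Since $\myR\phi^{\rm an}_{*}$, the functors $h^{i}$, taking the stalk at $b$, and reduction modulo the maximal ideal are all compatible with the morphism $\bC\to\sO$, applying them to the morphism of complexes $\bC_{X^{\rm an}}\to\sO_{X^{\rm an}}$ exhibits the natural map $H^{i}(X_{b}^{\rm an},\bC)\to H^{i}(X_{b},\sO_{X_{b}})$ as the composite
$$
H^{i}(X_{b}^{\rm an},\bC)\ \longrightarrow\ \myR^{i}\phi_{*}\sO_{X}\otimes\kappa(b)\ \xrightarrow{\ \varphi^{i}_{b}\ }\ H^{i}(X_{b},\sO_{X_{b}}).
$$
In particular $\varphi^{i}_{b}$ is surjective provided the natural map $H^{i}(X_{b}^{\rm an},\bC)\to H^{i}(X_{b},\sO_{X_{b}})$ is.

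The surjectivity of this last map is exactly where the hypothesis that the fibres are Du~Bois is used. For any proper $\bC$-scheme $Z$ with Du~Bois singularities one has $\Om^{0}_{Z}\simeq\sO_{Z}$, so the $E_{1}$-degeneration of the Hodge--de~Rham spectral sequence of the Deligne--Du~Bois complex (see \eqref{def:DB} and the references cited there) gives that $H^{i}(Z^{\rm an},\bC)=F^{0}H^{i}(Z^{\rm an},\bC)$ surjects onto $\operatorname{gr}_{F}^{0}H^{i}(Z^{\rm an},\bC)=\bH^{i}(Z,\Om^{0}_{Z})=H^{i}(Z,\sO_{Z})$, the surjection being the one induced by $\bC_{Z}\to\sO_{Z}$. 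Taking $Z=X_{b}$ then finishes the proof. The step I expect to be the main obstacle is the middle one: verifying carefully that the Hodge-theoretic quotient map on each fibre really is the reduction of the global base-change map $\varphi^{i}_{b}$, i.e. the naturality of relative GAGA and of proper base change with respect to $\bC\to\sO$. Granting this, together with the classical but non-trivial degeneration statement just quoted, the conclusion is purely formal. This is the theorem of Du~Bois and Jarraud, \cite[Th\'eor\`eme]{MR0376678} (cf.\ \cite[4.6]{DuBois81}).
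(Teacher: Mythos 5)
The paper does not actually prove \eqref{thm:coh-invariance}: it is quoted with a \qed as the base-change theorem of Du~Bois--Jarraud \cite[Th\'eor\`eme]{MR0376678} (cf.\ \cite[4.6]{DuBois81}). Your argument is a correct reconstruction of exactly that classical proof, and it matches the mechanism the paper itself invokes in the introduction: surjectivity of $H^i(X_b^{\rm an},\bC)\to H^i(X_b,\sO_{X_b})$ for Du~Bois fibers (degeneration of the Hodge--De~Rham spectral sequence of $\Om^\bdot_{X_b}$ together with $\Om^0_{X_b}\simeq\sO_{X_b}$), fed into Grothendieck's cohomology-and-base-change criterion, which is precisely \cite[Lemme~1]{MR0376678}. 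The only loose point is the preliminary reduction from $K$ to $\bC$ ``by faithfully flat base change'': in general there is no morphism between $\Spec K$ and $\Spec\bC$, so one would need a spreading-out/Lefschetz-principle argument plus invariance of the DB condition under field extension; but since the paper defines DB singularities only for complex schemes, a DB family is implicitly a family over $\bC$ and this step can simply be dropped.
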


\noindent
The next theorem is our main flatness and base change result.

\begin{thm}\label{thm:coh-base-change}
  Let $\phi:X\to B$ be a projective DB family \ujjj{and $\sL$ a relatively ample line
    bundle on $X$.}
  Then 
  \begin{enumerate-p}
  \item\label{item:6} the sheaves $\omegai$ are flat over $B$ for all $i$, 
  \item\label{item:3} the sheaves $\phi_*(\omegai\otimes \sL^q)$ are locally free and
    compatible with arbitrary base change for all $i$ and for all $q\gg 0$, and
  \item\label{item:12} for any base change morphism $\vartheta: T\to B$ and for all $i$,
    $$
    \big(\omegai\big)_T\simeq \omegait.
    $$
  \end{enumerate-p}
\end{thm}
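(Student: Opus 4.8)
The plan is to deduce all three assertions from the invariance result for structure sheaves of DB families, \eqref{thm:coh-invariance}, by passing to cyclic covers, and then to transport the resulting flatness through relative Serre duality and standard cohomology‑and‑base‑change. Both \eqref{item:6} and \eqref{item:3} are local on $B$, so I may assume $B$ is affine; after replacing $\sL$ by a sufficiently large power and adjusting the embedding $\iota$ I may also assume $\sL=\sO_\pi(1)\resto X$. The first and main step is to show that $\myR^i\phi_*\sL^{-q}$ is locally free and compatible with arbitrary base change for every $q\ge 0$ and every $i$. Fix $m\gg 0$ so that $\sL^m$ is globally generated, choose a sufficiently general $s\in H^0(X,\sL^m)$, and let $\pi_m\colon W_m\to X$ be the associated degree‑$m$ cyclic cover, so that $\pi_{m*}\sO_{W_m}\simeq\bigoplus_{j=0}^{m-1}\sL^{-j}$. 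Then $\pi_m$ is finite and flat, hence $\psi\leteq\phi\circ\pi_m\colon W_m\to B$ is flat and projective, and each fibre $(W_m)_b\to X_b$ is the cyclic cover of the DB variety $X_b$ attached to $s\resto{X_b}$, which for $s$ general is again DB by \eqref{thm:cyclic-cover-of-DB-is-DB} and \eqref{cor:smooth-fiberspace-is-DB}. Thus $\psi$ is a projective DB family, so by \eqref{thm:coh-invariance} the sheaf $\myR^i\psi_*\sO_{W_m}\simeq\myR^i\phi_*\bigl(\bigoplus_{j=0}^{m-1}\sL^{-j}\bigr)$ is locally free and compatible with base change; taking direct summands and letting $m\to\infty$ gives the claim for all $q\ge 0$.

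Next I would deduce the local freeness in \eqref{item:3} and the flatness in \eqref{item:6}. By Serre vanishing, $\myR^j\phi_*(\omegai\otimes\sL^q)=0$ for all $j>0$ once $q\gg 0$. By \eqref{lem:D-duality} one has $\omegai\otimes\sL^q\simeq\sExt^{N-i}_{\bP^N_B}(\sO_X,\omega_\pi(q))$ as $\sO_X$‑modules, so \eqref{item:2}, applied with $\sF=\sO_X$ and using the local freeness of $\myR^i\phi_*\sL^{-q}$ from the first step, yields
$$
\phi_*(\omegai\otimes\sL^q)\simeq\sHom_B\bigl(\myR^i\phi_*\sL^{-q},\,\sO_B\bigr)\qquad\text{for }q\gg 0,
$$
which is locally free. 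Since for $q\gg 0$ the higher direct images of $\omegai\otimes\sL^q$ vanish and $\phi_*(\omegai\otimes\sL^q)$ is locally free, the standard flatness criterion for a coherent sheaf on a projective family shows that $\omegai$ is flat over $B$, proving \eqref{item:6}.

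For \eqref{item:12}, recall that for a flat projective morphism the relative dualizing complex commutes with base change, $\omega_{\phi_T}^\bdot\simeq\myL\vartheta_X^*\omega_\phi^\bdot$; once every $\omegai$ is flat over $B$, the hyper‑Tor spectral sequence computing the cohomology sheaves of $\myL\vartheta_X^*\omega_\phi^\bdot$ collapses, giving $\omegait\simeq\vartheta_X^*\omegai=(\omegai)_T$, which is \eqref{item:12}. Finally, knowing that $\omegai$ is flat over $B$ with vanishing higher direct images of $\omegai\otimes\sL^q$ for $q\gg 0$, Grauert's theorem gives that $\phi_*(\omegai\otimes\sL^q)$ is locally free and that its formation commutes with base change; combined with \eqref{item:12} applied on $X_T$ (whose fibres are fibres of $\phi$, hence DB, so that $\phi_T$ is again a DB family) this completes \eqref{item:3}.

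The one point that requires real care is the passage, in the first step, from ``$s\resto{X_b}$ general'' to ``$W_m\to B$ is a DB family'', i.e.\ arranging the generality needed for \eqref{thm:cyclic-cover-of-DB-is-DB} simultaneously for every $b\in B$. I would handle this by Noetherian induction on $B$: a single general section works over a dense open $B^\circ\subseteq B$, and one repeats over the lower‑dimensional complement with a new section; the conclusions of the first step are local on $B$, and the local ranks agree across the strata because $\myR^i\phi_*\sL^{-q}$ is computed fibrewise from $h^i(X_b,\sL^{-q}\resto{X_b})$, which is independent of the chosen section. Everything else — Serre duality and vanishing, \eqref{item:2}, and cohomology‑and‑base‑change — is routine.
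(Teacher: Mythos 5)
Your proposal follows the paper's own proof in all of its main steps: reduce to affine $B$, form the cyclic cover $Y^m=\Spec_X\bigl(\bigoplus_{j=0}^{m-1}\sL^{-j}\bigr)$ attached to a general section of $\sL^m$, use \eqref{thm:cyclic-cover-of-DB-is-DB} and the Du~Bois--Jarraud theorem \eqref{thm:coh-invariance} to conclude that $\myR^i\phi_*\sL^{-j}$ is locally free and compatible with arbitrary base change, then dualize via \eqref{lem:D-duality} and \eqref{item:2} to obtain (\ref{thm:coh-base-change}.\ref{item:3}), and deduce (\ref{thm:coh-base-change}.\ref{item:6}) by the criterion underlying \cite[III.9.9]{Hartshorne77}. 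The one genuinely different step is (\ref{thm:coh-base-change}.\ref{item:12}): the paper proves it by induction on $i$, using the exchange results \cite[1.8, 1.9]{MR555258} together with the flatness already established, whereas you invoke base change for the relative dualizing complex of a flat projective morphism and let flatness of the sheaves $h^{-i}(\omega^\bdot_\phi)$ kill the higher Tor terms. That is a legitimate alternative (flatness of $\phi$ makes every base change tor-independent), at the cost of citing a nontrivial duality-theoretic fact that the paper in effect reproves through \cite{MR555258}. A minor slip at the end: ``Grauert's theorem'' requires a reduced base; what you want there is the cohomology-and-base-change statement for a $B$-flat sheaf whose higher cohomology vanishes on every fiber.

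The substantive deviation --- your Noetherian-induction patch for the simultaneous genericity of the section --- does not work as written. The property needed in the first step is local freeness of $\myR^i\phi_*\sL^{-q}$ on $B$ together with compatibility with \emph{arbitrary} base change, and this cannot be verified stratum by stratum: running the cyclic-cover argument for the restricted family $X_{B_1}\to B_1$ over a closed stratum only controls $\myR^i\phi_{B_1 *}$, i.e.\ the cohomology of those fibers as a family over $B_1$; it says nothing about the behavior of $\myR^i\phi_*\sL^{-q}$ in directions normal to $B_1$, and nothing in your argument excludes a jump of $h^i(X_b,\sL^{-q}\resto{X_b})$ exactly along $B_1$. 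The sentence ``the local ranks agree across the strata because $h^i(X_b,\sL^{-q}\resto{X_b})$ is independent of the chosen section'' is precisely the assertion that has to be proved, not a reason --- and over a non-reduced base even rank constancy would not give base-change compatibility. The mechanism that forces these $h^i$ to be locally constant is \eqref{thm:coh-invariance} applied to \emph{one} family of covers over (a neighborhood in) $B$; stratum-wise applications with different sections cannot be glued by the reasoning you give. The paper itself treats this point briskly --- a single general $\vartheta\in H^0(X,\sL^m)$ over the affine base, with \eqref{thm:cyclic-cover-of-DB-is-DB} applied to the fibers --- so your instinct that this is the delicate spot is sound; but any repair must still produce a single family of covers, DB fiberwise near the point of $B$ under consideration (or reduce the base-change statement to such local situations), so that \eqref{thm:coh-invariance} applies, rather than assembling the conclusion from a stratification.
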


\begin{subrem}
  For a coherent sheaf $\sF$ on $X$, the pushforward $\phi_*\sF$ being compatible
  with arbitrary base change means that for any morphism $\vartheta:T\to B$,
  $$
  \big(\phi_*\sF\big)_T \simeq   \big(\phi_T\big)_*\sF_T.
  $$
  In particular, (\ref{thm:coh-base-change}.\ref{item:3}) implies that for any
  $\vartheta:T\to B$, 
  $$
  \big(\phi_*(\omegai\otimes \sL^q)\big)_T\simeq
  \big(\phi_T\big)_*\bigg(\big(\omegai\big)_T\otimes \sL^q_T\bigg).
  $$
  Combined with (\ref{thm:coh-base-change}.\ref{item:12}) this means that for any
  $\vartheta:T\to B$,
  \begin{equation}
    \label{eq:12}
    \big(\phi_*(\omegai\otimes \sL^q)\big)_T\simeq
    \big(\phi_T\big)_*(\omegait\otimes \sL^q_T).
  \end{equation}
\end{subrem}

\begin{proof}[Proof of \eqref{thm:coh-base-change}]
  We may asssume that $B=\Spec R$ is affine.
  %
  %
  By definition, $\sL^m$ is relatively generated by global sections for all
  \ujjj{$m\gg 0$}.  For a given $m\in\bN$, choose a general section $\vartheta\in
  H^0(X, \sL^m)$ and consider the $\sO_X$-algebra
  $$
  \sA_m=\bigoplus_{j=0}^{m-1}\sL^{-j}\simeq
  {\bigoplus_{j=0}^{\infty}\sL^{-j}t^j}\bigg/{\big(t^m-\vartheta\big)}
  $$
  as in \cite[2.50]{KM98}. 
  Let $Y^m\leteq \Spec_X\sA_m$ and $\sigma: Y^m\to X$ the induced finite morphism.
  Then 
  $$
  \myR^i(\phi\circ\sigma)_*\sO_{Y^m}
  \simeq \myR^i\phi_*(\sigma_*\sO_{Y^m}) \simeq
  \myR^i\phi_*\sA_m \simeq \bigoplus_{j=0}^{m-1} \myR^i\phi_*\sL^{-j}
  $$
  for all $i$ and all $b\in B$. Note that by construction, this direct sum
  decomposition is compatible with arbitrary base change.
  By \uj{\eqref{thm:cyclic-cover-of-DB-is-DB}}, $\phi\circ \sigma$ is again a DB
  family and hence $\myR^i(\phi\circ\sigma)_*\sO_{Y^m}$ is locally free and
  compatible with arbitrary base change by \eqref{thm:coh-invariance}.  Since $\phi$
  is flat and $\sL$ is locally free, it follows that then $\myR^i\phi_*\sL^{-j}$ is
  locally free and compatible with arbitrary base change for all $i$ and for all
  $j>0$.
  Then taking $\sF=\sO_X$ and applying \cite[III.6.7]{Hartshorne77},
  \eqref{lem:D-duality}, and \eqref{item:2}, we obtain that
  \begin{equation}
    \label{eq:11}
    \qquad
     \phi_*(\omegai\otimes \sL^q) \simeq
     \sHom_B(\myR^i\phi_*\sL^{-q}, \sO_B)
      \qquad \text{for $q\gg 0$}.
  \end{equation}
  This proves (\ref{thm:coh-base-change}.\ref{item:3}) and then
  (\ref{thm:coh-base-change}.\ref{item:6}) follows easily by an argument similar to
  the one used to prove the equivalence of $(i)$ and $(ii)$ in the proof of
  \cite[III.9.9]{Hartshorne77}.

  To prove (\ref{thm:coh-base-change}.\ref{item:12}) we will use induction on $i$.
  Notice that it follows trivially for $i<0$ (and $i>N$, but we will not use that
  fact) by \eqref{lem:D-duality}, so the start of the induction is covered.
  Consider the pull back map,
  $$
  \varrhoit: \underbrace{\big(\sExt^{N-i}_{\bP^N_B}(\sO_X,
    \omega_\pi)\big)_T}_{\big(\omegai\big)_T} \to
  \underbrace{\sExt^{N-i}_{\bP^N_T}(\sO_{X_T}, \omega_{\bP^N_T})}_{\omegait}.
  $$
  By the inductive hypothesis $\varrhojt$ is an isomorphism and
  $\sExt^{N-j}_{\bP^N_B}(\sO_X, \omega_\pi)\simeq \omegaj$ is flat over $B$ by
  (\ref{thm:coh-base-change}.\ref{item:6}).  Then by \cite[1.9]{MR555258},
  $\varrho^{-(j+1)}_T$ is also an isomorphism. This proves
  (\ref{thm:coh-base-change}.\ref{item:12}).
\end{proof}

\begin{lem}
  \label{item:1} 
  Let $X$ be a subscheme of $\,\bP^N$, $\sF$ a coherent sheaf on $X$ and $\sN$ a
  fixed line bundle on $\bP^N$.
  Then $\sF$ is $S_k$ at $x$ if and only if $\sExt^j_{\bP^N}(\sF, \sN)_x=0$ for all
  $j>N-k$.
\end{lem}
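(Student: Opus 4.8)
\smallskip
\noindent
This is a statement about the stalk at $x$: both ``$\sF$ is $S_k$ at $x$'' and ``$\sExt^j_{\bP^N}(\sF,\sN)_x=0$'' are conditions on $\sF_x$ alone, so if $x\notin X$ there is nothing to prove. Assume $x\in X$ and set $R=\sO_{\bP^N,x}$, a regular local ring of dimension $n$ (with $n=N$ when $x$ is closed), and $M=\sF_x$, a nonzero finitely generated $R$-module via $\sO_{\bP^N,x}\twoheadrightarrow\sO_{X,x}$. Because $\sN$ is invertible, $\sN_x\simeq R$ and therefore $\sExt^j_{\bP^N}(\sF,\sN)_x\simeq\Ext^j_R(M,R)$ for every $j$; note these vanish outside $0\le j\le n$. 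Thus the lemma reduces to the local assertion: $M$ satisfies Serre's condition $S_k$ if and only if $\Ext^j_R(M,R)=0$ for all $j>n-k$.

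\smallskip
\noindent
The plan is to prove this algebraic statement by testing it at every prime $\frp\in\Spec R$. Since $\Ext$ commutes with localization, $\Ext^j_R(M,R)=0$ for $j>n-k$ is equivalent to $\Ext^j_{R_\frp}(M_\frp,R_\frp)=0$ for every $\frp$ and every $j>n-k$. Now $R_\frp$ is again regular local, so $M_\frp$ has finite projective dimension and $\Ext^\bullet_{R_\frp}(M_\frp,R_\frp)$ is concentrated in degrees from $\operatorname{grade}(\operatorname{Ann}M_\frp,R_\frp)=\operatorname{ht}\frp-\dim M_\frp$ up to $\operatorname{pd}_{R_\frp}M_\frp=\operatorname{ht}\frp-\depth M_\frp$, and is nonzero at both ends. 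Hence the vanishing above is equivalent to $\operatorname{ht}\frp-\depth M_\frp\le n-k$ for all $\frp\in\operatorname{Supp}M$, i.e.\ (using that $R$ is a regular, hence equidimensional and catenary, local ring, so $\operatorname{ht}\frp+\dim(R/\frp)=n$) to
\[
\depth M_\frp\;\ge\;k-\dim(R/\frp)\qquad\text{for every }\frp\in\operatorname{Supp}M.
\]
The last step is to recognize that this family of depth inequalities is exactly the condition $S_k$ for $M$, which is done by unwinding the definition of $S_k$ prime by prime.

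\smallskip
\noindent
The step I expect to be the main obstacle is precisely this final matching: one must keep careful track of the conventions, since $n=\dim\sO_{\bP^N,x}$ enters through the degree shift, and the inequalities above must be lined up with the definition of $S_k$ at $x$ with its $\inf(k,-)$ truncation written out — the restrictive primes being those with $\dim(R/\frp)$ small, while the truncation is what accounts for the low-dimensional irreducible components of $\operatorname{Supp}M$. Everything else — passing to $R$, identifying $\sExt^j_{\bP^N}(\sF,\sN)_x$ with $\Ext^j_R(M,R)$, and the Auslander--Buchsbaum and grade computations over the regular local rings $R_\frp$ — is routine. Alternatively, once the problem has been localized at $x$ one may simply quote the classical description of $S_k$ in terms of the vanishing of $\sExt$-sheaves into the dualizing sheaf of a regular scheme, in the same circle of ideas as the base-change lemmas \cite[1.8, 1.9]{MR555258}.
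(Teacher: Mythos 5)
You have the right local setup, and everything up to the depth inequalities is correct: with $R=\sO_{\bP^N,x}$, $M=\sF_x$, the identification $\sExt^j_{\bP^N}(\sF,\sN)_x\simeq\Ext^j_R(M,R)$, and, at each $\frp$, the nonvanishing range of $\Ext^\bullet_{R_\frp}(M_\frp,R_\frp)$ being $[\operatorname{ht}\frp-\dim M_\frp,\ \operatorname{ht}\frp-\depth M_\frp]$, the vanishing in degrees $>n-k$ is indeed equivalent to $\depth M_\frp\ge k-\dim(R/\frp)$ for all $\frp\in\supp M$. The genuine gap is exactly the step you flagged and deferred, and it is not a bookkeeping matter: if ``$S_k$ at $x$'' is read with the $\min(k,\dim)$ truncation, the asserted final equivalence is false, so no unwinding of the definition will produce it. Concretely, take $\sF=\sO_L$ for a line $L\subset\bP^3$ and $k=2$: at $x\in L$ the stalk is $R/(f,g)$ for a regular sequence, so $\sF$ is Cohen--Macaulay, hence $S_2$ at $x$ in the truncated sense, yet $\Ext^2_R(M,R)\neq 0$ with $2>N-k=1$. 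The truncation is not ``accounted for'' by the low-dimensional components on the $\Ext$ side; it is precisely what breaks the equivalence whenever $\dim\sF_x<k$. (Quoting a reference for ``$S_k$ via $\sExt$-vanishing'' at the end runs into the same convention problem, so it does not repair this.)

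For comparison, the paper's proof is one line and sidesteps all of this by reading ``$\sF$ is $S_k$ at $x$'' as the untruncated condition $\depth_{\sO_{X,x}}\sF_x\ge k$ (and tacitly taking $x$ closed, so $\dim R=N$; note that your silent replacement of $N-k$ by $n-k$ at non-closed points changes the statement): by Auslander--Buchsbaum over the regular local ring $R$, $\depth\sF_x=N-\operatorname{proj\,dim}_R\sF_x$, and since the top nonvanishing $\Ext^j_R(M,R)$ sits in degree $j=\operatorname{proj\,dim}_RM$, the vanishing for $j>N-k$ is equivalent to $\depth\sF_x\ge k$. Under that reading your prime-by-prime analysis is correct but superfluous: your family of inequalities is equivalent to the single one at $\frm$ (via $\depth M_\frp\ge\depth M-\dim(R/\frp)$), i.e., it collapses to the paper's Auslander--Buchsbaum argument. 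So either adopt the untruncated convention and shorten your proof to the maximal-ideal case, or, if you insist on the truncated definition, the lemma itself needs the hypothesis $\dim\sF_x\ge k$ and your last step must be rewritten accordingly.
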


\begin{proof}
  Since $\sO_{\bP^N,x}$ is a regular local ring,
  $$
  d\leteq \depth_{\sO_{X,x}}\sF_x=\depth_{\sO_{\bP^N,x}}\sF_x
  =N-\operatorname{proj}\dim_{\sO_{\bP^N,x}}\sF_x.
  $$
  Therefore, $d\geq k$ if and only if $\Ext^j_{\sO_{\bP^N,x}}(\sF_x, \sN_x)=0$ for
  all $j>N-k$.
\end{proof}

\noindent
Using our results in this section we obtain a criterion for Serre's $S_k$ condition,
analogous to \cite[5.72]{KM98}, in the relative setting.

\begin{thm}
  \label{thm:serre-duality} %
  Let $\phi:X\to B$ be a projective DB family, $x\in X$ and $b=\phi(x)$.  Then
  ${X_b}$ is $S_k$ at $x$ if and only if
  $$
  \omegai_x=0 \qquad\text{for $i<k$.}
  $$
\end{thm}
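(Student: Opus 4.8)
The plan is to reduce the $S_k$ condition on the fibre $X_b$ to a fibrewise $\sExt$-vanishing by means of \eqref{item:1} and \eqref{lem:D-duality}, and then to transfer that vanishing up to $X$ using the base change statement (\ref{thm:coh-base-change}.\ref{item:12}) together with Nakayama's lemma.

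First I would apply \eqref{item:1} to the coherent sheaf $\sO_{X_b}$ on $X_b\subseteq \bP^N_b$ with the fixed line bundle $\sN=\omega_{\bP^N_b}$: this says that $X_b$ is $S_k$ at $x$ if and only if $\sExt^j_{\bP^N_b}(\sO_{X_b},\omega_{\bP^N_b})_x=0$ for all $j>N-k$. By \eqref{lem:D-duality} we have $\sExt^{N-i}_{\bP^N_b}(\sO_{X_b},\omega_{\bP^N_b})\simeq\omegaib$, so, substituting $i=N-j$, this criterion becomes: $X_b$ is $S_k$ at $x$ if and only if $\big(\omegaib\big)_x=0$ for all $i<k$ (the range $i<0$ imposes nothing, since $\omegaib=0$ there, again by \eqref{lem:D-duality}).

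Next I would invoke (\ref{thm:coh-base-change}.\ref{item:12}) for the base change $\vartheta\colon T=\Spec\kappa(b)\to B$. Here $X_T=X_b$, the relative dualizing complex $\omega_{\phi_T}^\bdot$ of $X_b\to\Spec\kappa(b)$ is the dualizing complex $\omega_{X_b}^\bdot$, hence $\omegait\simeq\omegaib$, and therefore $\big(\omegai\big)_T\simeq\omegaib$ as sheaves on $X_b$. Taking stalks at $x$ gives $\big(\omegaib\big)_x\simeq\big(\omegai\big)_x\otimes_{\sO_{B,b}}\kappa(b)$, so $X_b$ is $S_k$ at $x$ if and only if $\big(\omegai\big)_x\otimes_{\sO_{B,b}}\kappa(b)=0$ for all $i<k$. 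To finish, I would pass from this fibrewise vanishing to the vanishing of the stalk itself: $\omegai$ is coherent, so $\big(\omegai\big)_x$ is a finitely generated $\sO_{X,x}$-module, and the induced homomorphism $\sO_{B,b}\to\sO_{X,x}$ is local, so $\fm_b\sO_{X,x}\subseteq\fm_x$; hence $\big(\omegai\big)_x\otimes_{\sO_{B,b}}\kappa(b)=\big(\omegai\big)_x/\fm_b\big(\omegai\big)_x=0$ forces $\big(\omegai\big)_x/\fm_x\big(\omegai\big)_x=0$, and so $\big(\omegai\big)_x=0$ by Nakayama's lemma; the reverse implication is immediate. Chaining the equivalences yields that $X_b$ is $S_k$ at $x$ if and only if $\omegai_x=0$ for all $i<k$, as claimed.

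I do not expect a genuine obstacle here: the statement is essentially a formal consequence of \eqref{item:1}, \eqref{lem:D-duality} and (\ref{thm:coh-base-change}.\ref{item:12}), the only points that need any care being the harmless index shift $i=N-j$ and the Nakayama step that transfers fibrewise vanishing to stalkwise vanishing. If anything, the one truly substantive input is the base change compatibility already packaged into (\ref{thm:coh-base-change}.\ref{item:12}), without which the passage from the fibre $X_b$ to the total space $X$ would be the hard part.
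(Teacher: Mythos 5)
Your argument is correct and follows essentially the same route as the paper: the fibrewise $S_k$ criterion from \eqref{item:1} combined with the duality identification of $\sExt^{N-i}_{\bP^N_b}(\sO_{X_b},\omega_{\bP^N_b})$ with $h^{-i}(\omega^\bdot_{X_b})$, then base change via (\ref{thm:coh-base-change}.\ref{item:12}) applied to $T=\Spec\kappa(b)$, and Nakayama's lemma to pass from fibrewise to stalkwise vanishing. Your write-up just makes explicit the index shift and the Nakayama step that the paper leaves to the reader.
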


\begin{proof}
  Let $\sF=\sO_{X_b}$, $j=N-i$ and $\sN= \omega_{\bP^N_b}$. Then \eqref{item:2} and
  (\ref{item:1}) imply that ${X_b}$ is $S_k$ at $x$ if and only if $\omegaib_x=0$ for
  $i<k$. Then the statement follows from (\ref{thm:coh-base-change}.\ref{item:12})
  and Nakayama's lemma.
\end{proof}

\noindent
The following result asserts the invariance of the $S_k$ property in DB families:

\begin{thm}\label{thm:local-invariance}
  %
  Let $\phi:X\to B$ be a projective {DB family} and $U\subseteq X$ an open subset.
  Assume that $B$ is connected and 
  the general fiber $U_{b_\text{gen}}$ of $\phi\resto U$ is $S_k$. Then
  all fibers $U_b$ of $\phi\resto U$ are $S_k$.
\end{thm}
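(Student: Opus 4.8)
The plan is to rephrase the pointwise $S_k$-property of the fibres as the vanishing of a single coherent sheaf, using the Serre-duality criterion \eqref{thm:serre-duality}, and then to exploit the flatness of that sheaf over $B$ supplied by \eqref{thm:coh-base-change}.

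Concretely, I would set $\sG\leteq\bigoplus_{0\le i<k}h^{-i}(\omega_\phi^\bdot)$, a coherent sheaf on $X$. Since $\phi$ is projective it carries a relatively ample line bundle, so \eqref{thm:coh-base-change} applies and, by its flatness statement, every $h^{-i}(\omega_\phi^\bdot)$ is flat over $B$; hence so are $\sG$ and its restriction $\sG\resto U$. Because $U_b$ is an open subscheme of $X_b$, it is $S_k$ exactly when $X_b$ is $S_k$ at every point of $U_b$, and by \eqref{thm:serre-duality} this holds if and only if $h^{-i}(\omega_\phi^\bdot)_x=0$ for all $x\in U_b$ and all $i<k$, i.e.\ if and only if $U_b\cap\operatorname{Supp}\sG=\emptyset$. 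Thus the locus of $S_k$-fibres of $\phi\resto U$ is precisely $B\setminus\phi\bigl(\operatorname{Supp}(\sG\resto U)\bigr)$, and the theorem amounts to the implication: if this locus is dense in $B$, then $\sG\resto U=0$.

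To establish that implication I would first reduce to $B$ integral: replacing $B$ by one of its irreducible components with the reduced structure and $X$ by the corresponding base change again produces a projective DB family (base change preserves flatness, projectivity, and the fibres $X_b$), over which the general fibre is still $S_k$ since a dense open of $B$ meets every component densely. Then, writing $\eta$ for the generic point of $B$ and $K=K(B)$, density of the $S_k$-locus means it contains $\eta$, so $\operatorname{Supp}(\sG\resto U)$ is disjoint from $\phi^{-1}(\eta)$, equivalently $(\sG\resto U)\otimes_{\sO_B}K=0$. On the other hand, a coherent sheaf that is flat over an integral scheme has no $\sO_B$-torsion, so $\sG\resto U$ injects into its restriction $(\sG\resto U)\otimes_{\sO_B}K$ to the generic fibre; together with the previous vanishing this forces $\sG\resto U=0$, hence $\operatorname{Supp}(\sG\resto U)=\emptyset$, and therefore $U_b$ is $S_k$ for every $b\in B$.

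Given \eqref{thm:serre-duality} and \eqref{thm:coh-base-change}, I do not expect a genuine obstacle: the argument is a short deduction, which is presumably why it closes the section. The only points requiring care are the translation in the second paragraph — that pointwise $S_k$-ness of a fibre is governed uniformly by the stalks of the sheaves $h^{-i}(\omega_\phi^\bdot)$, which ultimately rests on the base-change compatibility built into \eqref{thm:serre-duality} and \eqref{thm:coh-base-change} — and the elementary fact that a sheaf flat over an integral base cannot be nonzero while being supported away from the generic fibre.
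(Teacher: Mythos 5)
Your proposal is correct and follows essentially the same route as the paper: both reduce the $S_k$ condition on fibres to the vanishing of the stalks of $h^{-i}(\omega_\phi^\bdot)$ for $i<k$ via \eqref{thm:serre-duality}, and then use the flatness of these sheaves from \eqref{thm:coh-base-change} to rule out support concentrated away from the generic fibre. The paper phrases this by showing an irreducible component of $\supp h^{-i}(\omega_\phi^\bdot)$ meeting a bad fibre must dominate $B$, whereas you phrase it via torsion-freeness of a flat sheaf over an integral base; these are the same argument.
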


\begin{proof}
  Suppose that the fiber $U_b$ of $\phi\resto U$ is not $S_k$. Then by
  \eqref{thm:serre-duality} there exists an $i<k$ such that $\omegai_x\neq 0$ for
  some $x\in U_b$. Let $Z$ be an irreducible component of $\supp\omegai$ such that
  $Z\cap U_b\neq\emptyset$.  It follows that $Z\cap U$ is dense in $Z$.  By
  (\ref{thm:coh-base-change}.\ref{item:6}) $\omegai$ is flat over $B$ and thus $Z$
  and then also $Z\cap U$ dominate $B$. However, that implies that $Z\cap
  U_{b_\text{gen}}\neq\emptyset$ contradicting the assumption that $U_{b_\text{gen}}$
  is $S_k$ and hence the proof is complete.
\end{proof}

As mentioned in the introduction, our main application is the following.

\begin{cor}\label{cor:invariance-of-S_k-proj}
  Let $\phi:X\to B$ be a projective {family with potentially lc fibers} or a
  projective lc morphism and $U\subseteq X$ an open subset.  Assume that $B$ is
  connected and the general fiber $U_{b_\text{gen}}$ of $\phi\resto U$ is $S_k$
  (resp.\ CM). Then all fibers $U_b$ of $\phi\resto U$ are $S_k$ (resp.\ CM).
\end{cor}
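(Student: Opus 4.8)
The plan is to reduce the statement to Theorem~\ref{thm:local-invariance}. First I would observe that $\phi$ is a projective DB family: it was established above, as a consequence of Theorem~\ref{thm:main-proof} (see also Remark~\ref{rem:lc-is-db}), that for a projective family with potentially lc fibers, as well as for a projective lc morphism, every fiber $X_b$ is DB. Thus $\phi$, together with the open subset $U\subseteq X$, satisfies the hypotheses of Theorem~\ref{thm:local-invariance}.

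For the $S_k$ assertion I would then simply invoke Theorem~\ref{thm:local-invariance}: since $B$ is connected and the general fiber $U_{b_\text{gen}}$ of $\phi\resto U$ is $S_k$, all fibers $U_b$ of $\phi\resto U$ are $S_k$. For the Cohen--Macaulay assertion I would use the elementary fact that a Noetherian local ring is Cohen--Macaulay if and only if it satisfies Serre's condition $S_k$ for every $k$. Hence, if $U_{b_\text{gen}}$ is Cohen--Macaulay then it is $S_k$ for all $k$, and applying the $S_k$ case already proved, separately for each $k$, we conclude that every $U_b$ is $S_k$ for all $k$, i.e.\ Cohen--Macaulay. Alternatively, since the dimension of every fiber is bounded by $N\leteq \dim X$, the scheme $U_b$ is Cohen--Macaulay precisely when it is $S_N$, so the CM case also follows from a single application of the $S_k$ case with $k=N$.

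I do not anticipate any real difficulty: once Theorem~\ref{thm:main-proof} and Theorem~\ref{thm:local-invariance} are available, this corollary is formal, the actual content being hidden in those two inputs (the former is the heart of the paper, the latter packages the base-change and semicontinuity machinery of Theorems~\ref{thm:coh-base-change} and \ref{thm:serre-duality}). The only points deserving a moment's attention are that the notion of ``general fiber'' in the hypothesis coincides with the one in Theorem~\ref{thm:local-invariance} (so that no further shrinking of $B$ is needed) and the routine passage between the Cohen--Macaulay property and the collection of conditions $S_k$.
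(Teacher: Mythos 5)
Your proof is correct and follows essentially the same route as the paper, whose own proof is the one-liner that the statement ``follows directly from \eqref{thm:main-proof} and \eqref{thm:local-invariance}'': the fibers are DB by \eqref{thm:main-proof} (cf.\ \eqref{rem:lc-is-db}), and then \eqref{thm:local-invariance} gives the $S_k$ statement. Your explicit reduction of the Cohen--Macaulay case to $S_k$ for all $k$ (or to $S_N$ with $N$ bounding the fiber dimension) is exactly the routine step the paper leaves implicit.
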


\begin{proof}
  Follows directly from \eqref{thm:main-proof} and \eqref{thm:local-invariance}.
\end{proof}

The following example 
shows that the equivalent statement does not hold in mixed characteristic.

\begin{example}[(Schr\"oer)]\label{ex:schroer}
  Let $S$ be an ordinary Enriques surface in characteristic $2$ (see \cite[p.\
  77]{MR986969} for the definition of ordinary). Then $S$ is liftable to
  characteristic $0$ by \cite[1.4.1]{MR986969}.  Let $\eta:Y\to \Spec R$ be a family
  of Enriques surfaces such that the special fiber 
  is isomorphic to $S$ and the general fiber 
  is an Enriques surface of characteristic $0$.

  Let $\zeta:Z\to \Spec R$ be the family of the projectivized cones over the members
  of the family $\eta$. I.e., for any $t\in \Spec R$, $Z_t$ is the projectivized cone
  over $Y_t$.  Since $K_{Y_t}\equiv 0$ for all $t\in \Spec R$, we obtain that $\zeta$
  is both a projective family with potentially lc fibers, and a projective lc
  morphism.
  By the choice of $\eta$, the dimension of the cohomology group $H^1(Y_b,
  \sO_{Y_b})$ jumps: it is $0$ on the general fiber, and $1$ on the special fiber.
  Consequently, by \eqref{lem:CM-coh}, the general fiber of $\zeta$ is CM, but the
  special fiber is not.
\end{example}

Recall the following CM condition used in the above example:

\begin{lem}\label{lem:CM-coh}
  Let $E$ be a smooth projective variety over a field of arbitrary characteristic and
  $Z$ the cone over $E$.  Then $Z$ is CM if and only if $h^i(E,\sO_E(m))=0$ for
  $0<i<\dim E$ and $m\in \bZ$.
\end{lem}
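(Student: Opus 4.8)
The plan is to reduce the statement to a local cohomology computation at the vertex. Write $L\leteq \sO_E(1)$ for the ample line bundle defining the cone, $R\leteq\bigoplus_{m\ge 0}H^0(E,L^m)$ for the section ring, $\mathfrak m\leteq\bigoplus_{m>0}H^0(E,L^m)$ for its irrelevant maximal ideal, and $v\in Z$ for the vertex. First I would observe that $Z$ is smooth away from $v$ (as $E$ is smooth), hence CM away from $v$, so $Z$ is CM if and only if it is CM at $v$; moreover a neighbourhood of $v$ in $Z$ is the same whether one takes the affine or the projectivized cone, so $\sO_{Z,v}\simeq R_{\mathfrak m}$, a local ring of dimension $\dim E+1$. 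Thus $Z$ is CM if and only if $\depth R_{\mathfrak m}=\dim E+1$, i.e.\ $H^i_{\mathfrak m}(R)=0$ for all $i<\dim E+1$.

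Next I would compute the modules $H^i_{\mathfrak m}(R)$. The punctured affine cone $U\leteq\Spec R\setminus\{v\}$ is the complement of the zero section in the total space of $L^{-1}$ over $E$, that is, $U\simeq\Spec_E\bigl(\bigoplus_{m\in\bZ}L^m\bigr)$; the projection $p\colon U\to E$ is affine with $p_*\sO_U\simeq\bigoplus_{m\in\bZ}L^m$, so $H^j(U,\sO_U)\simeq\bigoplus_{m\in\bZ}H^j(E,L^m)$ for all $j$. Since $\Spec R$ is affine, the local cohomology exact sequence gives $H^i_{\mathfrak m}(R)\simeq H^{i-1}(U,\sO_U)$ for $i\ge 2$, while $H^0_{\mathfrak m}(R)=0$ because $R$ is a domain and $H^1_{\mathfrak m}(R)=\coker\bigl(R\to H^0(U,\sO_U)\bigr)=0$ because $H^0(E,L^m)=0$ for $m<0$ (we may assume $\dim E\ge 1$, the case $\dim E=0$ being trivial). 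Hence $H^i_{\mathfrak m}(R)\simeq\bigoplus_{m\in\bZ}H^{i-1}(E,L^m)$ for $i\ge 2$ and $H^0_{\mathfrak m}(R)=H^1_{\mathfrak m}(R)=0$.

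Finally I would combine the two. The vanishing $H^i_{\mathfrak m}(R)=0$ for $i<\dim E+1$ is automatic for $i=0,1$, so it amounts to $H^i_{\mathfrak m}(R)=0$ for $2\le i\le\dim E$, which by the computation above (with the shift $j=i-1$) is exactly $H^j(E,L^m)=0$ for all $m\in\bZ$ and all $1\le j\le\dim E-1$; since $L=\sO_E(1)$ this is precisely the asserted condition $h^i(E,\sO_E(m))=0$ for $0<i<\dim E$ and $m\in\bZ$. I do not expect a serious obstacle; the only points needing care are the identification $\sO_{Z,v}\simeq R_{\mathfrak m}$ (which is what makes CM-ness of the cone a statement purely about $R$, regardless of whether ``cone'' is read affinely or projectively) and the push-forward formula $p_*\sO_U\simeq\bigoplus_{m\in\bZ}L^m$, both of which are standard.
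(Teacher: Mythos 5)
Your proof is correct, and it is essentially the argument the paper delegates to its references: the paper gives no proof of this lemma, citing only \cite[Ex.~71]{Kollar_Exercises} and \cite[3.3]{Kovacs99}, and your reduction of CM-ness to $\depth \sO_{Z,v}=\depth R_{\mathfrak{m}}$ together with the identification $H^i_{\mathfrak{m}}(R)\simeq\bigoplus_{m\in\bZ}H^{i-1}(E,\sO_E(m))$ for $i\ge 2$ (and the vanishing of $H^0_{\mathfrak{m}},H^1_{\mathfrak{m}}$) is exactly the standard computation carried out there. One remark worth keeping in mind: your $R$ is the full section ring $\bigoplus_{m\ge 0}H^0(E,\sO_E(m))$, which is indeed the intended meaning of ``cone'' here (and the one relevant for \eqref{ex:schroer}); if one instead took the Spec of the homogeneous coordinate ring of a fixed embedding, the statement would additionally require projective normality, so it is good that your setup makes this choice explicit.
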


\begin{proof}
  See \cite[Ex.~71]{Kollar_Exercises} and \cite[3.3]{Kovacs99}.
\end{proof}

The most natural statement along these lines would be if we did not have to assume
the existence of the projective compactification of the family $U\to B$. This is
related to the following conjecture, which is an interesting and natural problem on
its own:

\begin{conj}\label{conj:compacifying-lc-morphisms}
  Let $\psi:U\to B$ be an affine, finite type lc morphism. Then there exists a base
  change morphism $\vartheta:T\to B$ and a projective lc morphism $\phi:X\to T$ such
  that $U_T\subseteq X$ and $\psi_T=\phi\resto {U_T}$.
\end{conj}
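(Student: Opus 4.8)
The plan is to reduce \eqref{conj:compacifying-lc-morphisms} to a relative semistable-reduction statement for log canonical pairs over a curve. Since $\psi\colon U\to B$ is affine and of finite type, $U$ is quasi-projective over $B$, so taking the closure of $U$ inside some $\bP^N_B$ produces a projective morphism $\bar\psi\colon\bar U\to B$ with $U\hookrightarrow\bar U$ an open dense immersion over $B$; normalizing $\bar U$ changes nothing over $U$. Let $\bar D$ be the closure of $D$ and $\Gamma$ the reduced divisorial part of $\bar U\setminus U$. Because $(U,D)$ is already lc, one can take a log resolution of $(\bar U,\bar D+\Gamma)$ that is an isomorphism over $U$ and then run a relative MMP over $B$, in the style used for \eqref{thm:dlt-models}, to reach a projective model---still an isomorphism over $U$---on which the pair $(\bar U,\bar D+\Gamma)$ is lc and $K_{\bar U}+\bar D+\Gamma$ is $\bQ$-Cartier. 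This gives a proper extension of $\psi$ whose total space carries a reasonable boundary, but not yet the fibrewise control the conjecture demands.

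The substance lies in the next step: arranging, after a finite base change $\vartheta\colon T\to B$, that the pair remains log canonical after adding the fibre, i.e.\ that $(\bar U_T,\ \bar D_T+\Gamma_T+(\bar U_T)_t)$ is lc for every closed point $t\in T$. Over the open set $B\subseteq T$ this holds by hypothesis (and is preserved by the modifications above, which are isomorphisms over $U$), and over all but finitely many of the remaining points the family is already ``nice'', so the question is local around the finitely many bad fibres. For these I would invoke semistable reduction in the sense of Kempf--Knudsen--Mumford--Saint-Donat, applied to the total space together with the horizontal boundary $\bar D+\Gamma$: after a suitable ramified base change $T\to B$ and a further modification one obtains a model whose central fibres are reduced, with $\Gamma_T$ absorbing the new exceptional divisors, after which a final relative MMP over the curve $T$ contracting the redundant vertical components should yield the desired $(X,D')\to T$ with $(X,D'+X_t)$ lc for all $t$. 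Projectivity of $\phi$ is then automatic, since the minimal-model construction is carried out projectively over $T$, and $U_T\subseteq X$ is an open subset with $\phi\resto{U_T}=\psi_T$ because every step was an isomorphism over $U$.

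The hard part---and the reason this is stated only as a conjecture---is precisely this second step. Running the relative MMP over $T$ while keeping track of the boundary so that the output is lc along every fibre amounts, in effect, to the existence of a relative log canonical (or relative good minimal) model of $(\bar U,\bar D+\Gamma)$ over $T$, and this is not available in the required generality: the pair need not be of log general type over $T$, so \cite{BCHMc06} does not apply directly, and semistable reduction alone produces a central fibre whose vertical components carry the wrong coefficients and is therefore not lc. In special cases---most notably when the fibres are of log general type, where stable limits exist by the moduli-theoretic results---one can instead take the relative log canonical model directly and argue that its central fibre is semi log canonical, hence lc after normalization; but obtaining \eqref{conj:compacifying-lc-morphisms} in full appears to require genuinely new input beyond the techniques used here.
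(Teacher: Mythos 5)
This statement is a \emph{conjecture} in the paper; no proof is given, only the remark that it ``should follow from an argument using MMP techniques but it might require parts that are at this time still open, such as the abundance conjecture.'' Your proposal does not (and does not claim to) close the gap, and your diagnosis of where it lies --- the need for a relative log canonical (or good minimal) model over the base after semistable reduction, which is unavailable outside the log general type case because \cite{BCHMc06} does not apply and semistable reduction alone leaves vertical components with the wrong coefficients --- is exactly the obstruction the authors have in mind. Your reduction of the problem (compactify projectively over $B$, take a log resolution and dlt model in the style of \eqref{thm:dlt-models} that is an isomorphism over $U$, then try to achieve lc-ness of $(X, D'+X_t)$ for every fibre after a finite base change) is a reasonable and faithful elaboration of the authors' one-line expectation, so there is nothing to fault beyond the fact that the statement remains open for precisely the reason you identify.
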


We expect that \eqref{conj:compacifying-lc-morphisms} should follow from an argument
using MMP techniques but it might require parts that are at this time still open,
such as the abundance conjecture.
On the other hand, \eqref{conj:compacifying-lc-morphisms} would clearly imply the
following strengthening of \eqref{cor:invariance-of-S_k-proj}:

\begin{proclaimspecial}{\bf Conjecture-Corollary}\label{conj:inv-of-S_k}
  Let $\psi:U\to B$ be a finite type lc morphism. Assume that $B$ is connected and
  the general fiber of $\psi$ is $S_k$ (resp.\ CM). Then all fibers are $S_k$ (resp.\
  CM).
\end{proclaimspecial}

\def\cprime{$'$} \def\polhk#1{\setbox0=\hbox{#1}{\ooalign{\hidewidth
  \lower1.5ex\hbox{`}\hidewidth\crcr\unhbox0}}} \def\cprime{$'$}
  \def\cprime{$'$} \def\cprime{$'$} \def\cprime{$'$}
  \def\polhk#1{\setbox0=\hbox{#1}{\ooalign{\hidewidth
  \lower1.5ex\hbox{`}\hidewidth\crcr\unhbox0}}} \def\cdprime{$''$}
  \def\cprime{$'$} \def\cprime{$'$}
\providecommand{\bysame}{\leavevmode\hbox to3em{\hrulefill}\thinspace}
\providecommand{\MR}{\relax\ifhmode\unskip\space\fi MR}
\providecommand{\MRhref}[2]{%
  \href{http://www.ams.org/mathscinet-getitem?mr=#1}{#2}
}
\providecommand{\href}[2]{#2}

\end{document}